\numberwithin{equation}{section}
\newtheorem{theorem}{Theorem}[section]
\newtheorem{lemma}[theorem]{Lemma}
\newtheorem{proposition}[theorem]{Proposition}
\newtheorem{definition}[theorem]{Definition}
\newtheorem{example}[theorem]{Example}
\newtheorem{assumption}[theorem]{Assumption}
\newcommand{\I}{\mathbb{I}}
\newcommand{\N}{\mathbb{N}}
\newcommand{\R}{\mathbb{R}}
\newcommand{\CC}{\mathscr{C}}
\newcommand{\LL}{\mathscr{L}}
\newcommand{\NN}{\mathscr{N}}
\renewcommand{\SS}{\mathscr{S}}
\newcommand{\VV}{\mathscr{V}}
\newcommand{\WW}{\mathscr{W}}
\newcommand{\cA}{{\ensuremath{\mathcal A}}}
\newcommand{\cO}{{\ensuremath{\mathcal O}}}
\newcommand{\cP}{{\ensuremath{\mathcal P}}}
\newcommand{\cY}{{\ensuremath{\mathcal Y}}}
\newcommand{\Kliminf}{K\kern-3pt-\kern-2pt\mathop{\rm
lim\,inf}\limits}  
\newcommand{\Klimsup}{K\kern-3pt-\kern-2pt\mathop{\rm lim\,sup}\limits}  
\renewcommand{\d}{{\mathrm d}}
\newcommand{\restr}[1]{\lower3pt\hbox{$|_{#1}$}}
\newcommand{\weakto}{\rightharpoonup}
\newcommand{\nchi}{{\raise.3ex\hbox{$\chi$}}}
\newcommand{\Rd}{{\R^d}}
\newcommand{\Rn}{{\R^n}}
\newcommand{\RN}{{\R^N}}
\def\qed{\ifmmode 
  \else \leavevmode\unskip\penalty9999 \hbox{}\nobreak\hfill
  \fi               
    \qquad           \hbox{\hskip.5em $\square$
                \hskip.1em}}
\title{$\Gamma$-convergence and relaxations for gradient flows in metric spaces: a minimizing movement approach}
\begin{document}

\author{
Florentine Flei$\ss$ner 
\thanks{Technische Universit\"at M\"unchen  email:
  \textsf{fleissne@ma.tum.de}.
  } 
}

\date{}

\maketitle

\begin{abstract} 
  We present new abstract results on the interrelation between the minimizing movement scheme for gradient flows along a sequence of $\Gamma$-converging functionals and the gradient flow motion for the corresponding limit functional, in a general metric space. We are able to allow a relaxed form of minimization in each step of the scheme, and so we present new relaxation results too.   
\end{abstract}

{\small\tableofcontents}

\section{Introduction}
For a sequence of $\Gamma$-converging functionals $\phi_\epsilon \stackrel{\Gamma}{\to} \phi$, one can consider the minimizing movement scheme for gradient flows along $(\phi_\epsilon)_{\epsilon > 0}$, in which every time step $\tau$ is associated with a parameter $\epsilon = \epsilon(\tau)$, simultaneously passing to the limit in the time steps $\tau\to0$ and parameters $\epsilon\to0$.

The aim of the paper is to introduce and study an abstract condition concerning the choice $\epsilon = \epsilon(\tau)$ in order to obtain curves of maximal slope for the limit functional $\phi$. Moreover, we want to allow a relaxed form of minimization in each step of the scheme. \\

\paragraph{Curves of maximal slope and $\Gamma$-convergence}

The notion of curves of maximal slope goes back to \cite{DeGiorgiMarinoTosques80}, with further developments in \cite{Degiovanni-Marino-Tosques85}, \cite{MarinoSacconTosques89}.

Let $(\SS, d)$ be a complete metric space. Curves of maximal slope for an extended real functional $\phi: \SS \rightarrow (-\infty, +\infty]$ with respect to its relaxed slope are described by the energy dissipation inequality (EDI($\phi$))
\begin{equation*}
\phi(u(s)) - \phi(u(t)) \ \geq \ \frac{1}{2} \int^{t}_{s}{|\partial^- \phi|^2 (u(r)) \ dr} \ + \ \frac{1}{2} \int^{t}_{s}{|u'|^2 (r) \ dr}    
\end{equation*} 
for $\LL^1\text{-a.e. }s, t, \ s\leq t$, in which $|\partial^- \phi|$ denotes the relaxed slope and $|u'|$ the metric derivative (section \ref{subsec: gf}).     

In the finite dimensional and smooth setting, this corresponds to the gradient flow equation
\begin{equation*}
u'(t) = -\nabla \phi(u(t)).   
\end{equation*}
The term gradient flow is also common for curves of maximal slope.  \\

Let a sequence of energy functionals $(\phi_\epsilon)_{\epsilon > 0}$ $\Gamma$-converging to a limit functional $\phi$ be given. A natural question arises:\\

\textbf{If $u_\epsilon$ are curves of maximal slope for $\phi_\epsilon$ and they converge to a curve $u$, is $u$ then a curve of maximal slope for $\phi$?} ($\star$) \\

\textbf{1) The $\lambda$-convex case} Since the pioneering work on $G$-convergence for differential operators \cite{Spagnolo67}, \cite{de1973sulla}, the following statement is well-known:  If the functionals are $\lambda$-convex with equi-compact sublevel sets and $\SS$ is a Hilbert space, and the initial data converge, then the gradient flows $u_\epsilon$ converge to the limit gradient flow (in fact, Mosco convergence is sufficient in this case \cite{Attouch84}). A corresponding result can be proved for metric spaces as well \cite{MR3328994}. \\ 

\textbf{2) The Serfaty-Sandier approach} The considerations by Sandier and Serfaty in \cite{SandierSerfaty04}, \cite{serfaty2011gamma} are motivated by the convergence of the Ginzburg-Landau heat flow and the question of a general underlying structure in the cases in which a positive answer can be given to ($\star$).
Their main assumption is that the upper gradients $|\partial^-\phi_\epsilon|, \ |\partial^-\phi|$ satisfy
\begin{equation}\label{eq: Sandier Serfaty introduction}
u_\epsilon \to u \ \Rightarrow \ \mathop{\liminf}_{\epsilon\to 0} |\partial^- \phi_\epsilon|(u_\epsilon) \geq |\partial^- \phi|(u).  
\end{equation}    

A related problem for generalized gradient systems and rate-independent evolution is studied in \cite{mielke2014evolutionary}, \cite{MielkeRoubicekStefanelli08}, \cite{MRS12}.\\ 

In the $\lambda$-convex case, the following condition on the local slopes can be proved \cite{Ortner05TR}: 
\begin{equation}\label{eq: Ortner introduction}
u_\epsilon \to u \ \Rightarrow \ \mathop{\liminf}_{\epsilon \to 0} |\partial \phi_{\epsilon}|(u_\epsilon) \geq |\partial \phi|(u)  
\end{equation} 
(see section \ref{subsec: gf} for the definition of local slope). \\

So in both situations 1) and 2), the slopes satisfy a $\Gamma$-liminf condition. In general, such a condition does not hold. \\

\textbf{3) The general case}\\

In general, the answer to our question ($\star$) is \textbf{no}.
Even in the finite dimensional and smooth setting (as in Example \ref{ex: finite dimensional introduction}), the limit of a sequence of gradient flows $u_\epsilon$ for $\phi_\epsilon$ is in general no solution to the gradient flow equation for the limit functional $\phi$.\\

For illustrative purposes, we give a simple example. 
\begin{example}\label{ex: finite dimensional introduction}
We consider $f_\epsilon(x) = x^2 + \rho_\epsilon \cos^2\left(\frac{x}{\epsilon}\right) \ (\epsilon << \rho_\epsilon\to0)$ and $f(x)=x^2$ $(x\in\R)$. Let $u_\epsilon: [0, +\infty) \rightarrow \R$ satisfy
\begin{eqnarray*}
u_\epsilon'(t) &=& -f_\epsilon'(u_\epsilon(t)), \\
u_\epsilon(0) &=& u_\epsilon^0
\end{eqnarray*}
with initial values $u_\epsilon^0\to u_0\neq 0$. Then $(u_\epsilon)_{\epsilon > 0}$ converges pointwise to the constant curve $u \equiv u_0$ which does not solve the gradient flow equation for $f$. 
\end{example} 

The notion of $\Gamma$-convergence allows for a wide range of perturbations $\phi_{\epsilon}$ of $\phi$ so that a passage to the limit $\epsilon\to0$ in the energy dissipation inequality (EDI($\phi_\epsilon$)) $\longrightarrow$ (EDI($\phi$)) is in general not possible for lack of control over the upper gradients $|\partial^-\phi_\epsilon|$ of $\phi_\epsilon$.\\

Our approach aims to study the gradient flow motion along $(\phi_\epsilon)_{\epsilon > 0}$ on the level of the minimizing movement schemes and to establish a connection with the gradient flow motion of the limit functional $\phi$.   

\paragraph{Minimizing movements} 
At the beginning of the 90's, Ennio De Giorgi introduced the notion of minimizing movements \cite{DeGiorgi93} as ``natural meeting point'' of many different research fields in mathematics.

The minimizing movement scheme for gradient flows is given by
\begin{equation*}
\psi(u_\tau^n) + \frac{1}{2\tau} d^2(u_\tau^n, u_\tau^{n-1}) \ = \ \min_{v\in \SS} \left\{\psi(v) + \frac{1}{2\tau} d^2(v,u_\tau^{n-1}) \right\} 
\end{equation*}
with $n\in\N$ and time steps $\tau > 0$. It is closely related to the notion of curves of maximal slope. Under suitable assumptions, the piecewise constant interpolations of the discrete values $(u_\tau^n)_{n\in\N}$ converge (up to a subsequence) to a curve of maximal slope for $\psi$ (as $\tau\to0$) \cite{AmbrosioGigliSavare05}. The scheme mimics the gradient flow motion along the functional $\psi$ on a discrete level. (section \ref{subsec: MM})

For functionals $\phi_\epsilon \stackrel{\Gamma}{\to}\phi$, one can consider the minimizing movement scheme along $(\phi_\epsilon)_{\epsilon > 0}$, given by 
\begin{equation*}
\phi_{\epsilon}(u_{\tau, \epsilon}^n) + \frac{1}{2\tau} d^2(u_{\tau, \epsilon}^n, u_{\tau, \epsilon}^{n-1}) \ = \ \min_{v\in \SS} \left\{\phi_{\epsilon}(v) + \frac{1}{2\tau} d^2(v,u_{\tau, \epsilon}^{n-1}) \right\} 
\end{equation*}
for  $\tau, \epsilon > 0$ (with well-prepared initial values $u_{\tau, \epsilon}^0$), define the piecewise constant interpolations $u_{\tau, \epsilon}(t) \equiv u_{\tau, \epsilon}^n$ for $t\in((n-1)\tau, n\tau]$, and let $\tau$ and $\epsilon$ tend to $0$ simultaneously. \\  

We are interested in the cases in which the limit curves of the minimizing movement scheme along $(\phi_\epsilon)_{\epsilon > 0}$ are curves of maximal slope for $\phi$. \\ 

In (\cite{braides2012local}, chapter 8) such minimization problems are examined for concrete examples of functionals $\phi_\epsilon \stackrel{\Gamma}{\to} \phi$, highlighting the dependence of the asymptotic behaviour on the interaction between $\epsilon$ and $\tau$.

Moreover, under suitable assumptions on $\phi_\epsilon$, the following statement can be proved (\cite{braides2012local}, Theorem 8.1) by means of the Fundamental Theorem of $\Gamma$-convergence and a diagonal argument: \\

If discrete values $(u_{\tau, \epsilon}^n)_{n\in\N_0} \ (\tau, \epsilon > 0)$ are given in accordance to the scheme above, then 
\begin{itemize}
\item there exists $\epsilon = \epsilon(\tau)$ such that any limit curve of the corresponding piecewise constant interpolations $(u_{\tau, \epsilon(\tau)})_{\tau > 0}$ can also be obtained by the minimizing movement scheme along the single functional $\phi$,
\item there exists $\tau = \tau(\epsilon)$ such that any limit curve of $(u_{\tau(\epsilon), \epsilon})_{\epsilon > 0}$ is the limit of a sequence of curves of maximal slope for $\phi_\epsilon$.  
\end{itemize}   

In view of this result and the examples in \cite{braides2012local}, we notice that the interaction between the time step $\tau$ and the parameter $\epsilon$ should be crucial in order to achieve convergence of the scheme to a curve of maximal slope for $\phi$. This goes hand in hand with our considerations on why the passage to the limit in the energy dissipation inequality is in general not possible.  

A special case in which the desired convergence of the scheme holds independent of the interaction between $\epsilon$ and $\tau$ is considered by Ortner in \cite{Ortner05TR}, by extending the arguments of \cite{AmbrosioGigliSavare05}. His main assumption is the $\Gamma$-liminf condition (\ref{eq: Ortner introduction}) on the local slopes which can be viewed as discrete counterpart of the Serfaty-Sandier condition (\ref{eq: Sandier Serfaty introduction}).    

\paragraph{Aim of our paper} This paper aims to derive a general condition concerning the choice $\epsilon = \epsilon(\tau)$ in order to achieve convergence of the minimizing movement scheme along $(\phi_\epsilon)_{\epsilon > 0}$ to curves of maximal slope for $\phi$. Note that we interpret the gradient flow motion along $(\phi_\epsilon)_{\epsilon > 0}$ as joint steepest descent movement instead of considering the single energy dissipation inequalities (EDI($\phi_\epsilon$)): 
We pass to the limits $\tau\to0$ and $\epsilon\to0$ simultaneously in the minimizing movement scheme, with a suitable choice $\epsilon = \epsilon(\tau)$. By comparison, the study of the limit behaviour of (EDI($\phi_\epsilon$)) (as in \cite{SandierSerfaty04}, \cite{serfaty2011gamma}) is related to first passing to the limit $\tau\to0$  in the scheme for fixed $\epsilon > 0$ and only then passing to the limit $\epsilon\to0$.

A special feature of our theory is that we are able to \textit{relax} the minimizing movement scheme along $(\phi_\epsilon)_{\epsilon > 0}$ by allowing \textit{approximate minimizers} in each step. We do not need the existence of exact solutions to the minimization problems. In particular, we do not need to require any lower semicontinuity or compactness property of the single functionals $\phi_\epsilon$.

As a particular case, the paper also deals with a relaxation of the classical minimizing movement scheme along a single functional.\\ 

Let us introduce a relaxed minimizing movement scheme along $(\phi_\epsilon)_{\epsilon > 0}$. We associate every time step $\tau > 0$ with a parameter $\epsilon = \epsilon(\tau) \to0 \ (\tau\to0)$ in such a way that $\epsilon(\tau)$ converges to $0$ as $\tau\to 0$.

\paragraph{Relaxed minimizing movement scheme along $(\phi_\epsilon)_{\epsilon > 0}$}
For every time step $\tau > 0$, find a sequence $\left(u_\tau^n\right)_{n\in\N}$ by the following scheme.
 
The sequence of initial values $u_\tau^0 \to u^0 \in D(\phi)$ satisfies $\phi_{\epsilon(\tau)}(u_\tau^0) \to \phi(u^0)$
and $u_\tau^n\in\SS \ (n\in\N)$ satisfies
\begin{equation}\label{eq: relaxed form of minimi introduction}
\phi_{\epsilon(\tau)}(u_\tau^n) + \frac{1}{2\tau} d^2(u_\tau^n, u_\tau^{n-1}) \ \leq \ \inf_{v\in \SS} \left\{\phi_{\epsilon(\tau)}(v) + \frac{1}{2\tau} d^2(v,u_\tau^{n-1}) \right\} + \gamma_\tau \tau
\end{equation}
with some $\gamma_\tau > 0, \ \gamma_\tau\to 0$ as $\tau\to0$.\\

The reader might be interested in the question of whether we may allow an error $\gamma_\tau^{(n)}$ depending on $n\in\N$ (instead of $\gamma_\tau\tau$) in the approximate minimization problems (\ref{eq: relaxed form of minimi introduction}). Indeed, a generalization of our theory to a non-uniform distribution of the error is possible.\\

\paragraph{A general condition concerning the right choice $\epsilon = \epsilon(\tau)$ in (\ref{eq: relaxed form of minimi introduction})}

Our main assumption (see assumption \ref{ass: crucial assumption}) is as follows: we suppose that for all $u_\tau, u\in\SS$ with $u_\tau \to u, \ \sup_\tau \phi_{\epsilon(\tau)}(u_\tau) < +\infty$ it holds that 
\begin{equation}\label{eq: crucial assumption introduction}
\mathop{\liminf}_{\tau \to 0} \frac{\phi_{\epsilon(\tau)}(u_\tau) - \cY_\tau \phi_{\epsilon(\tau)}(u_\tau)}{\tau} \ \geq \ \frac{1}{2} |\partial^- \phi|^2 (u),
\end{equation}
in which 
\begin{equation*}
\cY_\tau \phi_{\epsilon(\tau)} (u_\tau) := \inf_{v\in \SS} \left\{\phi_{\epsilon(\tau)}(v) + \frac{1}{2\tau} d^2(v,u_\tau) \right\}
\end{equation*}
denotes the Moreau-Yosida approximation. 
Our condition (\ref{eq: crucial assumption introduction}) relates the diagonal steepest descent movement along the sequence $(\phi_\epsilon)_{\epsilon > 0}$ with the relaxed slope of $\phi$.\\ 

Under suitable natural coercivity assumptions, we prove  
\begin{itemize}
\item
if condition (\ref{eq: crucial assumption introduction}) holds for $\epsilon = \epsilon(\tau)$, then any limit curve of our relaxed minimizing movement scheme along $(\phi_\epsilon)_{\epsilon > 0}$ with choice $\epsilon = \epsilon(\tau)$ is a curve of maximal slope for $\phi$. (see Theorem \ref{thm: main theorem}, Proposition \ref{prop: supplement to the theorem})
\end{itemize} 

\paragraph{Existence of a right choice $\epsilon = \epsilon(\tau)$} 
If $(\SS, d)$ is separable, we can prove that 
\begin{itemize}
\item  there always exists a sequence $(\epsilon_\tau)_{\tau > 0}$ with $\epsilon_\tau > 0$ such that our main assumption (\ref{eq: crucial assumption introduction}) is satisfied for all choices $(\epsilon(\tau))_{\tau > 0}$ with $\epsilon(\tau) \leq \epsilon_\tau$ $(\epsilon(\tau)\to0)$. (see Theorem \ref{thm: existence of epsilon(tau)}) 
\end{itemize}
We notice that this choice $\epsilon = \epsilon(\tau)$ solely depends on the velocity of $\Gamma$-convergence $\phi_\epsilon \stackrel{\Gamma}{\to} \phi$. 

\paragraph{A general example for the choice $\epsilon = \epsilon(\tau)$} We prove under suitable natural coercivity assumptions that  
if the local boundedness condition  
\begin{equation}\label{eq: local boundedness condition}
\mathop{\limsup}_{n\to+\infty} \sup_{v: d(v,u)\leq\rho_n} \left|\frac{\phi(v) - \phi_{\epsilon_n}(v)}{\epsilon_n}\right| \ < \ +\infty
\end{equation} 
holds for every $u\in\SS$, $\rho_n\to0 \ (\rho_n > 0)$, then (\ref{eq: crucial assumption introduction}) is satisfied for every choice $\epsilon = \epsilon(\tau)$ with $\frac{\epsilon(\tau)}{\tau}\to 0$ as $\tau\to0$. (see Proposition \ref{prop: perturbations})   
 
\paragraph{Plan of the paper and further results} 
In section \ref{sec:prel}, we give basic definitions. In \ref{subsec:top ass}, we specify our topological assumptions (which allow for metric topological spaces $(\SS, d, \sigma)$ where the topology $\sigma$ does not necessarily coincide with the one induced by the metric $d$).\\
   
In section \ref{sec: main theorem}, we introduce the relaxed minimizing movement scheme (\ref{eq: relaxed form of minimi introduction}) along $(\phi_\epsilon)_{\epsilon > 0}$ and we prove the convergence of the scheme to the gradient flow motion of the limit functional $\phi$ under the main assumption (\ref{eq: crucial assumption introduction}). We discuss a generalization of our theory to a non-uniform distribution of the error in (\ref{eq: relaxed form of minimi introduction}).\\ 

In section \ref{sec: constant sequences}, we focus on the special case in which $\phi_\epsilon = \psi$ is independent of $\epsilon > 0$. Applying the results of section \ref{sec: main theorem}, we show that one may allow a relaxed form of minimization in the clasical scheme along a single functional.
Moreover, we notice that the error order $o(\tau)$, characterizing the asymptotic behaviour of $\gamma_\tau\tau$ in the definition of the relaxed minimizing movement scheme (\ref{eq: relaxed form of minimi introduction}), is optimal. We also obtain a lower semicontinuous envelope relaxation result.\\ 

In section \ref{sec: convergence of gradient flows}, we consider the case that $(\phi_\epsilon)_{\epsilon > 0}, \phi$ satisfy the Serfaty-Sandier condition (\ref{eq: Sandier Serfaty introduction}) or its discrete counterpart (\ref{eq: Ortner introduction}). We prove that in this situation, condition (\ref{eq: crucial assumption introduction}) is satisfied for every choice $\epsilon = \epsilon(\tau)$.\\

In section \ref{sec: in separable metric spaces}, we prove, for the case that $(\SS, d)$ is separable, the existence of a right choice $\epsilon = \epsilon(\tau)$ with regard to condition (\ref{eq: crucial assumption introduction}).\\

In sections \ref{sec: 1D examples} and \ref{sec: perturbations and co}, we illustrate general methods to determine $\epsilon = \epsilon(\tau)$ with regard to (\ref{eq: crucial assumption introduction}). 

In \ref{subsec: perturbations}, we determine $\epsilon = \epsilon(\tau)$ for the case that the general condition (\ref{eq: local boundedness condition}) holds.
 
In \ref{subsec: restriction to bounded subsets}, we show explicitly that the deciding factor in the whole theory is the local behaviour of the $\Gamma$-converging sequence of functionals. 

In \ref{subsec: time space discretizations}, we consider time-space discretizations for a single functional, i.e. we set $\phi_\epsilon = \phi + \I_{\WW_\epsilon}$ and we derive conditions on $\WW_{\epsilon(\tau)}\subset\SS$ such that condition (\ref{eq: crucial assumption introduction}) holds.\\ 

In section \ref{sec: some final remarks}, we mention possible generalizations of our theory.   

\newpage

\section{Preliminaries}
\label{sec:prel}

\subsection{Gradient flows in metric spaces}
\label{subsec: gf}
For an introduction to gradient flows in metric spaces we refer to the fundamental book by Ambrosio, Gigli and Savar\'e \cite{AmbrosioGigliSavare05}. In this section we give a brief overview of some of the basic definitions which can be found in detail in (\cite{AmbrosioGigliSavare05}, chapter 1 and 2).  \\

Let $(a,b)$ be an open (possibly unbounded) interval of $\R$.\\

In the finite dimensional case, the classical gradient flow equation 
\begin{equation*}
u'(t) = -\nabla f(u(t)), \ t\in (a,b)
\end{equation*} 
for a $C^1$-function $f: \Rd \rightarrow \R$ can be equivalently expressed by the equation
\begin{equation*}
(f\circ u)'(t) = -\frac{1}{2}|u'|^2(t)-\frac{1}{2}\left|\nabla f(u(t))\right|^2, \ t\in (a,b).
\end{equation*}
This equivalent formulation constitutes a heuristic starting point for the introduction of gradient flows in general metric spaces $(\SS, d)$, the so-called curves of maximal slope, for extended real functionals $\psi: \SS \rightarrow (-\infty, +\infty]$.

\paragraph{Absolutely continuous curves, relaxed slope}
\begin{definition}
Let $(\SS, d)$ be a complete metric space. We say that a curve $v: \ (a,b) \rightarrow \SS$ is absolutely continuous and write $v\in AC(a,b;\SS)$ if there exists $m\in L^1(a,b)$ such that 
\begin{equation*}
d(v(s),v(t)) \leq \int^{t}_{s}{m(r) \ dr} \ \ \forall a < s\leq t < b. 
\end{equation*}\\
In this case, the limit
\begin{equation*}
|v'|(t) := \mathop{\lim}_{s\to t} \frac{d(v(s),v(t))}{|s-t|}
\end{equation*}
exists for $\LL^1$-a.e. $t\in (a,b)$, the function $t \mapsto |v'|(t)$ belongs to $L^1(a,b)$ and is called the metric derivative of $v$. The metric derivative is $\LL^1$-a.e. the smallest admissible function $m$ in the definition above. 
\end{definition}

Let $\psi: \SS \rightarrow (-\infty, +\infty]$ be an extended real functional with proper effective domain 
\begin{equation*}
D(\psi) := \{\psi < +\infty\} \ \neq \emptyset.
\end{equation*}
For $v\in D(\psi)$ we define
\begin{equation*}
|\partial\psi|(v) := \mathop{\limsup}_{w\stackrel{d}{\to}v} \frac{(\psi(v)-\psi(w))^+}{d(v,w)}.
\end{equation*}
Following \cite{AmbrosioGigliSavare05}, we consider a slight modification of the sequentially $\sigma$-lower semicontinuous envelope of $|\partial\psi|$ with respect to a suitable topology $\sigma$ on $\SS$ (see section \ref{subsec:top ass} for the topological assumptions). 

\begin{definition} 
The relaxed slope $|\partial^- \psi|: \SS \rightarrow [0, +\infty]$ of $\psi$ is defined by 
\begin{equation*}
|\partial^- \psi|(u) := \inf  \left\{\mathop{\liminf}_{n\to \infty} |\partial\psi|(u_n): \ u_n \stackrel{\sigma}{\weakto} u, \ \sup_{n}\{d(u_n, u), \psi(u_n)\} < +\infty \right\}.
\end{equation*}
\end{definition}

The concept of strong and weak upper gradients can be viewed as a weak counterpart of the modulus of the gradient in a general metric and non-smooth setting. 
 
\begin{definition}
The relaxed slope $|\partial^- \psi|$ is a strong upper gradient for the functional $\psi$ if for every absolutely continuous curve $v\in AC(a,b;\SS)$ the function $|\partial^- \psi|\circ v$ is Borel and 
\begin{equation*}
|\psi(v(t)) - \psi(v(s))| \leq \int^{t}_{s}{|\partial^- \psi|(v(r))|v'|(r) \ dr} \ \ \forall a < s\leq t < b.
\end{equation*}
In particular, if $|\partial^- \psi|\circ v |v'| \in L^1(a,b)$ then $\psi \circ v$ is absolutely continuous and 
\begin{equation*}
|(\psi \circ v)'(t)| \leq |\partial^- \psi|(v(t))|v'|(t) \text{ for } \LL^1\text{-a.e. } t\in (a,b).
\end{equation*}
\end{definition}

\begin{definition}
The relaxed slope $|\partial^- \psi|$ is a weak upper gradient for the functional $\psi$ if for every absolutely continuous curve $v\in AC(a,b;\SS)$ such that
$\psi \circ v$ is $\LL^1$-a.e. equal in $(a,b)$ to a function $\varphi$ with finite pointwise variation in $(a,b)$ and 
$|\partial^- \psi|\circ v |v'| \in L^1(a,b)$,
it holds that 
\begin{equation*}
|\varphi'(t)|\leq |\partial^- \psi|(v(t))|v'|(t) \text{ for } \LL^1{-a.e.} t\in (a,b).
\end{equation*}
\end{definition}

Now, we have all the ingredients to define curves of maximal slope. 

\paragraph{Curves of maximal slope}  
\begin{definition}
Let $|\partial^- \psi|$ be a strong or weak upper gradient for the functional $\psi: \SS\rightarrow (-\infty, +\infty]$. \\ A locally absolutely continuous curve $u: (a,b) \to \SS$ is called a curve of maximal slope for $\psi$ with respect to its upper gradient $|\partial^- \psi|$ if the following energy dissipation inequality is satisfied for $\LL^1\text{-a.e. } s, t\in (a,b), \ s\leq t$: 
\begin{equation}\label{eq: EDI}
\psi(u(s)) - \psi(u(t)) \ \geq \ \frac{1}{2} \int^{t}_{s}{|\partial^- \psi|^2 (u(r)) \ dr} \ + \ \frac{1}{2} \int^{t}_{s}{|u'|^2 (r) \ dr}.  
\end{equation}  
\end{definition}

Under additional assumptions, the existence of curves of maximal slope with respect to the relaxed slope can be proved \cite{AmbrosioGigliSavare05}. Their proof is based on the minimizing movement scheme for gradient flows which we explain in section \ref{subsec: MM}.\\

Before outlining the connection between curves of maximal slope and the notion of minimizing movements, we specify the assumptions on the metric space $(\SS, d)$ and the topology $\sigma$ on it.    

\subsection{Topological assumptions}
\label{subsec:top ass}

Throughout this paper 
\begin{eqnarray}
(\SS, d)  \text{ will be a given complete metric space}
\end{eqnarray}
and
\begin{eqnarray*}
\sigma \text{ will be an Hausdorff topology on } \SS \text{ compatible with } d \text{:}
\end{eqnarray*}
\begin{eqnarray}
(u_n,v_n) \stackrel{\sigma}{\weakto} (u,v)  \ &\Rightarrow& \ \mathop{\liminf}_{n \to \infty} d(u_n,v_n) \geq d(u,v) \text{ ,} \\
d(u_n,v_n) \rightarrow 0 \text{, } u_n \stackrel{\sigma}{\weakto} u \ &\Rightarrow& \  v_n \stackrel{\sigma}{\weakto} v \text{.}
\end{eqnarray}

These are exactly the same topological assumptions as in \cite{AmbrosioGigliSavare05}. The reasons for which they are made are explained in \cite{AmbrosioGigliSavare05} and can also be observed in this paper. 

\subsection{Minimizing movement scheme for gradient flows}
\label{subsec: MM}
The notion of minimizing movements was established by Ennio de Giorgi \cite{DeGiorgi93} at the beginning of the 90's, who got his inspiration from the paper \cite{AlmgrenTaylorWang93}. 

\begin{definition}
Let $F: (0,1)\times \SS \times \SS \rightarrow [-\infty, +\infty]$ be given and initial values $u_{\tau}^{0} \stackrel{\sigma}{\weakto} u^0 \ (\tau\to 0)$. If we can define $\overline{u}_{\tau}: [0, +\infty) \rightarrow \SS$, $\overline{u}_{\tau}(0) = u_{\tau}^{0}$, in the following way
\begin{equation*}
\overline{u}_{\tau}(t) \equiv u_{\tau}^{n} \text{ for } t\in ((n-1)\tau, n\tau] 
\end{equation*}
and 
\begin{equation*}
u_{\tau}^{n} \text{ is a minimizer for } F(\tau, u_{\tau}^{n-1}, \cdot), \ n\in \N, 
\end{equation*}
and if (a subsequence of) $(\overline{u}_{\tau})_{\tau > 0}$ $\sigma$-converges pointwise in $[0, +\infty)$ to a curve $u: [0, +\infty) \rightarrow \SS$, then $u$ is called a (generalized) minimizing movement for $F$ to the initial value $u^0$.  
\end{definition}

If we apply the minimizing movement scheme to 
\begin{equation*}
F(\tau,u,v) := f(v) + \frac{1}{2\tau}|u-v|^2 \ (u,v \in \Rd)
\end{equation*}
with $f: \Rd \rightarrow \R$ a Lipschitz continuous $C^1$-function, then the necessary condition of first order leads to a discrete version of the gradient flow equation for $f$, and indeed, every (generalized) minimizing movement for F is a gradient flow for $f$. 

With this in mind, for a given functional $\psi: \SS \rightarrow (-\infty, +\infty]$ we define 
\begin{equation*}
F_{\psi}: (0,1)\times \SS \times \SS \rightarrow [-\infty, +\infty], \ F_{\psi}(\tau, u, v) := \psi(v) + \frac{1}{2\tau} d^2(v,u) 
\end{equation*}
which seems to be only natural in order to construct steepest descent curves for $\psi$.\\ 

A related object of study is the Moreau-Yosida approximation defined below. We will repeatedly use the Moreau-Yosida approximation and the following notation.

\paragraph{Moreau-Yosida approximation}

Let $\tau > 0$ be given. The Moreau-Yosida approximation $\cY_\tau \psi$ of a functional $\psi: \SS \rightarrow (-\infty, +\infty]$ is defined as 
\begin{equation}
\cY_\tau \psi (u) := \inf_{v\in \SS} \left\{\psi(v) + \frac{1}{2\tau} d^2(v,u) \right\}, \ u\in \SS. 
\end{equation}

We define $J_\tau \psi[u]$ as the corresponding set of minimizers, i.e.  
\begin{eqnarray}
u_\tau \in J_\tau \psi[u] \ :\Leftrightarrow \ \psi(u_\tau) + \frac{1}{2\tau} d^2(u_\tau, u) = \cY_\tau \psi(u).
\end{eqnarray} 

\paragraph{Existence of curves of maximal slope}
Let $\psi: \SS \rightarrow (-\infty, +\infty]$ be given.\\

The following existence result is proved in (\cite{AmbrosioGigliSavare05}, chapter 2 and 3): \\

If $\psi$ satisfies assumption \ref{ass: AGS1}, then the set of generalized minimizing movements for $F_\psi$ is non-empty and for every generalized minimizing movement $u: [0, +\infty) \rightarrow \SS$ the energy inequality 
\begin{equation}\label{eq: AGS energy inequality}
\psi(u(0)) - \psi(u(t)) \ \geq \ \frac{1}{2} \int^{t}_{0}{|\partial^- \psi|^2 (u(r)) \ dr} \ + \ \frac{1}{2} \int^{t}_{0}{|u'|^2 (r) \ dr} 
\end{equation}
holds for all $t > 0$.\\ 

\begin{assumption}\label{ass: AGS1} 
\upshape
We suppose that there exist $A, B > 0, \ u_{\star}\in \SS$ such that 
\begin{equation}\label{eq: AGS1 coercive}  
\psi(\cdot) \geq -A -B d^2(\cdot, u_{\star}).
\end{equation}
Moreover, we assume that the following holds:
\begin{equation}\label{eq: AGS1 lower semicontinuity}
\sup_{n,m} d(u_n, u_m) < +\infty, \ u_n \stackrel{\sigma}{\weakto} u \ \Rightarrow \ \mathop{\liminf}_{n\to \infty} \psi(u_n) \geq \psi(u)
\end{equation}
and  
\begin{equation}\label{eq: AGS1 compact}
\sup_{n,m}\{d(u_n,u_m), \psi(u_n)\} < +\infty \ \Rightarrow \ \exists \ n_k \uparrow +\infty, u\in\SS: \ u_{n_k}\stackrel{\sigma}{\weakto}u. 
\end{equation}
\end{assumption}

In particular, if $|\partial^- \psi|$ is a strong upper gradient, equality holds in (\ref{eq: AGS energy inequality}) and $u$ is a curve of maximal slope for $\psi$ with respect to $|\partial^- \psi|$.\\

If $|\partial^- \psi|$ is only a weak upper gradient for $\psi$ and we assume in addition that $\psi$ satisfies the following continuity condition
\begin{equation}\label{eq: continuity condition}
\sup_{n\in\N}\left\{|\partial \psi|(v_n), d(v_n,v), \psi(v_n)\right\} < +\infty, \ v_n\stackrel{\sigma}{\weakto}v \ \Rightarrow \ \psi(v_n)\to \psi(v),
\end{equation}
then the proof of the energy inequality (\ref{eq: AGS energy inequality}) can be extended and the energy dissipation inequality (\ref{eq: EDI}) be obtained.  \\  

Strategies to prove the conditions on the relaxed slope to be an upper gradient and the continuity condition (\ref{eq: continuity condition}) respectively and examples in which they are satisfied are expounded e.g. in \cite{AmbrosioGigliSavare05}, \cite{RossiSavare06}. 

\subsection{$\Gamma$-convergence}

$\Gamma$-convergence was introduced by Ennio de Giorgi in the early 70's.  

\begin{definition}
Let $X$ be a topological space. A sequence $f_j : X\rightarrow [-\infty, +\infty]$ sequentially $\Gamma$-converges in $X$ to $f_\infty : X\rightarrow [-\infty, +\infty]$ if for all $x\in X$, $x_j\to x$ the following liminf-inequality holds
\begin{equation*}
f_\infty (x) \leq \mathop{\liminf}_{j\to \infty} f_j(x_j) 
\end{equation*}
and if for all $x\in X$ there exists a recovery sequence $\tilde{x}_j \to x$ such that
\begin{equation*}
f_\infty(x) = \mathop{\lim}_{j\to \infty} f_j(\tilde{x}_j).
\end{equation*}
\end{definition}

For its various applications, see for example the introductory book by Braides \cite{Braides02}. 

\newpage 

\section{Main theorem}\label{sec: main theorem}

We systematically study the gradient flow motion along a sequence of functionals $\phi_\epsilon : \SS \rightarrow (-\infty, +\infty]$ on the level of the minimizing movement scheme in each step of which we are able to allow a relaxed form of minimization. The functionals $\phi_\epsilon$ are associated with a limit functional $\phi: \SS \rightarrow (-\infty, +\infty]$ through a weakened $\Gamma(\sigma)$-liminf-inequality and a recovery sequence of initial values.

\subsection{Minimizing movement: $\Gamma$-convergence, relaxations} \label{subsec: thm}

Let $\phi, \ \phi_\epsilon: \SS \rightarrow (-\infty, +\infty], \ D(\phi), \ D(\phi_\epsilon) \neq \emptyset \ (\epsilon > 0)$ be given. 

We are dealing with the following assumptions on $(\phi_\epsilon)_{\epsilon > 0}$ and the relation between $(\phi_\epsilon)_{\epsilon > 0}$ and $\phi$:

\begin{assumption}\label{ass: co}
\upshape
We suppose that there exist $A, B > 0, \ u_{\star}\in \SS$ such that 
\begin{equation} \label{eq: coercive} 
\phi_\epsilon(\cdot) \geq -A -B d^2(\cdot, u_{\star}) \text{ for all } \epsilon > 0.
\end{equation}
Moreover, we assume that for $\epsilon_n\to 0 \ (\epsilon_n > 0)$ it holds that
\begin{equation} \label{eq: compact}
\sup_{n,m} \left\{\phi_{\epsilon_n}(u_{\epsilon_n}), d(u_{\epsilon_n},u_{\epsilon_m})\right\} < +\infty \ \Rightarrow \ \exists \ n_k \uparrow +\infty, u\in \SS: \ u_{\epsilon_{n_k}}\stackrel{\sigma}{\weakto}u. 
\end{equation}
\end{assumption}

\begin{assumption}\label{ass: liminf}
\upshape
We suppose that $(\phi_\epsilon)_{\epsilon > 0}$ and $\phi$ are connected through a weakened $\Gamma(\sigma)$-liminf-inequality, i.e. for $\epsilon_n\to 0 \ (\epsilon_n > 0)$ it holds that 
\begin{equation}
\sup_{n,m} d(u_{\epsilon_n}, u_{\epsilon_m}) < +\infty, \ u_{\epsilon_n} \stackrel{\sigma}{\weakto} u \ \Rightarrow \ \mathop{\liminf}_{n\to\infty} \phi_{\epsilon_n}(u_{\epsilon_n}) \geq \phi(u).
\end{equation} 
\end{assumption}

In view of the theory developed in \cite{AmbrosioGigliSavare05} (see section \ref{subsec: MM} in this paper), our assumptions \ref{ass: co} and \ref{ass: liminf} arise quite naturally.\\
 
Let us define a minimizing movement scheme for the gradient flow motion along the sequence $(\phi_\epsilon)_{\epsilon > 0}$ with a relaxed form of minimization in each step and time steps $\tau \to 0$.
 
We associate every time step $\tau > 0$ in the scheme with $\epsilon = \epsilon(\tau) > 0$ in such a way that $\epsilon(\tau)$ converges to $0$ as $\tau\to 0$. This choice $\epsilon= \epsilon(\tau)$ is crucial, as we will see. 

\paragraph{Relaxed minimizing movement scheme along $(\phi_\epsilon)_{\epsilon > 0}$}

For every time step $\tau > 0$, find a sequence $(u_\tau^n)_{n\in\N}$ by the following scheme.
 
The sequence of initial values $u_\tau^0 \stackrel{\sigma}{\weakto} u^0 \in D(\phi) \ (\tau\to 0)$ satisfies
\begin{equation}\label{eq: recovery sequence}
\sup_\tau d(u_\tau^0, u^0) < +\infty, \ \phi_{\epsilon(\tau)}(u_\tau^0) \to \phi(u^0)
\end{equation}
and $u_\tau^n\in\SS \ (n\in\N)$ satisfies
\begin{equation}\label{eq: relaxed form of minimi}
\phi_{\epsilon(\tau)}(u_\tau^n) + \frac{1}{2\tau} d^2(u_\tau^n, u_\tau^{n-1}) \ \leq \ \inf_{v\in \SS} \left\{\phi_{\epsilon(\tau)}(v) + \frac{1}{2\tau} d^2(v,u_\tau^{n-1}) \right\} + \gamma_\tau \tau
\end{equation}
with some $\gamma_\tau > 0, \ \gamma_\tau \to 0$ as $\tau \to 0$. \\

Let $\overline{u}_\tau : [0, +\infty) \rightarrow \SS$ be the corresponding piecewise constant interpolation, i.e.
\begin{eqnarray}
\overline{u}_\tau (t) &\equiv& u_\tau^n \text{    if } t\in ((n-1)\tau, n\tau], \ n\in \N, \label{eqn: mm} \\
\overline{u}_\tau (0) &=& u_\tau^0 \label{eqn: initial value}.
\end{eqnarray}

\paragraph{Main assumption}
The next assumption on the interrelation between the relaxed slope $|\partial^- \phi|$ of $\phi$ and the relaxed minimizing movement scheme along $(\phi_\epsilon)_{\epsilon > 0}$ is playing a central role throughout this paper.  

\begin{assumption}\label{ass: crucial assumption}
\upshape
We suppose that for all $u, u_\tau\in \SS \ (\tau > 0)$ such that 
\begin{equation*}
u_\tau \stackrel{\sigma}{\weakto} u, \ \sup_\tau \left\{\phi_{\epsilon(\tau)}(u_\tau), d(u_\tau, u)\right\} < +\infty 
\end{equation*}
it holds that 
\begin{equation}\label{eq: crucial assumption}
\mathop{\liminf}_{\tau \to 0} \frac{\phi_{\epsilon(\tau)}(u_\tau) - \cY_\tau \phi_{\epsilon(\tau)}(u_\tau)}{\tau} \ \geq \ \frac{1}{2} |\partial^- \phi|^2 (u).
\end{equation}
\end{assumption}

As defined in section \ref{subsec: MM}, $\cY_\tau\phi_{\epsilon(\tau)}$ denotes the Moreau-Yosida approximation of the functional $\phi_{\epsilon(\tau)}$.

\newpage
Now, our theorem reads as follows.

\begin{theorem}\label{thm: main theorem}
Let the assumptions \ref{ass: co}, \ref{ass: liminf} and \ref{ass: crucial assumption} be satisfied and construct $\overline{u}_\tau : [0, +\infty) \rightarrow \SS \ (\tau > 0)$ according to (\ref{eq: recovery sequence}) - (\ref{eqn: initial value}). 

Then there exist a locally absolutely continuous curve $u: [0, +\infty) \rightarrow \SS$ and a subsequence $(\overline{u}_{\tau_k})_{k\in \N}$ such that 
\begin{equation}\label{eq: convergence}
\overline{u}_{\tau_k} (t) \stackrel{\sigma}{\weakto} u(t) \text{ for all } t \geq 0
\end{equation} 
and $u$ satisfies the initial condition
\begin{equation}\label{eq: initial condition}
u(0) = u^0
\end{equation}
and the energy inequality
\begin{equation}\label{eq: energy inequality}
\phi(u(0)) - \phi(u(t)) \ \geq \ \frac{1}{2} \int^{t}_{0}{|\partial^- \phi|^2 (u(s)) \ ds} \ + \ \frac{1}{2} \int^{t}_{0}{|u'|^2 (s) \ ds}
\end{equation}
for all $t\geq 0$.

In particular, if the relaxed slope of $\phi$ is a strong upper gradient for $\phi$, then equality holds in (\ref{eq: energy inequality}) and  
\begin{equation}
u \text{ is a curve of maximal slope for } \phi \text{ with respect to } |\partial^- \phi|. 
\end{equation}
\end{theorem}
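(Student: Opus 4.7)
My strategy follows the classical Ambrosio--Gigli--Savar\'e scheme, adapted to accommodate both the $\Gamma$-convergence of $(\phi_\epsilon)_{\epsilon>0}$ and the relaxation error $\gamma_\tau\tau$. First, testing (\ref{eq: relaxed form of minimi}) with $v = u_\tau^{n-1}$ yields the discrete energy descent
\begin{equation*}
\phi_{\epsilon(\tau)}(u_\tau^n) + \frac{1}{2\tau}d^2(u_\tau^n, u_\tau^{n-1}) \leq \phi_{\epsilon(\tau)}(u_\tau^{n-1}) + \gamma_\tau\tau.
\end{equation*}
Summing, and invoking the coercivity (\ref{eq: coercive}), the convergence $\phi_{\epsilon(\tau)}(u_\tau^0)\to \phi(u^0)$ from (\ref{eq: recovery sequence}), and the fact that $N\gamma_\tau\tau \leq T\gamma_\tau \to 0$ on every bounded interval $[0,T]$, gives uniform bounds on $\phi_{\epsilon(\tau)}(u_\tau^n)$ and on $\sum_n \tau^{-1} d^2(u_\tau^n, u_\tau^{n-1})$. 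By Cauchy--Schwarz one derives the H\"older-type estimate $d^2(\overline{u}_\tau(t),\overline{u}_\tau(s)) \leq C(|t-s|+\tau)$, and together with the compactness portion of assumption \ref{ass: co} a refined Ascoli--Arzel\`a argument for the Hausdorff topology $\sigma$ (in the spirit of \cite{AmbrosioGigliSavare05}, Prop.~3.3.1) extracts a subsequence $\overline{u}_{\tau_k}(t) \stackrel{\sigma}{\weakto} u(t)$ pointwise, to some locally absolutely continuous limit $u$ satisfying $u(0)=u^0$ together with the lower semicontinuity bound $\int_0^t|u'|^2(r)\,dr \leq \liminf_k \sum_{n\tau_k\leq t}\tau_k^{-1}d^2(u_{\tau_k}^n,u_{\tau_k}^{n-1})$.

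Next, rewriting (\ref{eq: relaxed form of minimi}) in the slope form
\begin{equation*}
\phi_{\epsilon(\tau)}(u_\tau^{n-1}) - \phi_{\epsilon(\tau)}(u_\tau^n) \geq \frac{d^2(u_\tau^n,u_\tau^{n-1})}{2\tau} + \bigl[\phi_{\epsilon(\tau)}(u_\tau^{n-1}) - \cY_\tau\phi_{\epsilon(\tau)}(u_\tau^{n-1})\bigr] - \gamma_\tau\tau,
\end{equation*}
summing to an index $N$ with $N\tau \geq t$, and introducing the left-continuous interpolation $\underline{u}_\tau(r) \equiv u_\tau^{n-1}$ on $((n-1)\tau, n\tau]$, the middle sum is rewritten as the nonnegative integral $\int_0^{N\tau} \tau^{-1}\bigl(\phi_{\epsilon(\tau)}(\underline{u}_\tau(r)) - \cY_\tau\phi_{\epsilon(\tau)}(\underline{u}_\tau(r))\bigr)\,dr$. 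Passing to the limit $k\to \infty$ on the left-hand side uses (\ref{eq: recovery sequence}) for the initial term and the weakened $\Gamma(\sigma)$-liminf inequality of assumption \ref{ass: liminf}, applied to $\overline{u}_{\tau_k}(t)\stackrel{\sigma}{\weakto}u(t)$, for the terminal term, while $N\gamma_{\tau_k}\tau_k \to 0$ absorbs the relaxation error.

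The hard part is the limit of the slope integral, which is precisely what assumption \ref{ass: crucial assumption} is designed to handle. For a.e. $r$, the discrete velocity $\tau_k^{-1}d^2(u_{\tau_k}^n,u_{\tau_k}^{n-1})$ is finite along a further subsequence, so $d(\overline{u}_{\tau_k}(r),\underline{u}_{\tau_k}(r)) \to 0$, and the topological compatibility of $\sigma$ with $d$ forces $\underline{u}_{\tau_k}(r) \stackrel{\sigma}{\weakto} u(r)$. Combined with the uniform bound $\sup_{\tau,r}\phi_{\epsilon(\tau)}(\underline{u}_\tau(r))<+\infty$ from Step~1, assumption \ref{ass: crucial assumption} yields pointwise
\begin{equation*}
\liminf_{k\to\infty} \frac{\phi_{\epsilon(\tau_k)}(\underline{u}_{\tau_k}(r)) - \cY_{\tau_k}\phi_{\epsilon(\tau_k)}(\underline{u}_{\tau_k}(r))}{\tau_k} \geq \frac{1}{2}|\partial^-\phi|^2(u(r)),
\end{equation*}
so that Fatou's lemma supplies the $\frac{1}{2}\int_0^t|\partial^-\phi|^2(u(r))\,dr$ contribution to (\ref{eq: energy inequality}). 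Collecting all three estimates yields the energy inequality for every $t\geq 0$; the strong-upper-gradient case then upgrades it to equality, and hence to the EDI defining curves of maximal slope, via the standard chain-rule argument of \cite{AmbrosioGigliSavare05}.
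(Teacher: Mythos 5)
Your proposal is correct and follows essentially the same route as the paper's proof: discrete energy estimates plus the refined Ascoli--Arzel\`a compactness step, the telescoping rewriting of the scheme in terms of the Moreau--Yosida difference quotients, assumption \ref{ass: liminf} together with (\ref{eq: recovery sequence}) for the endpoint terms, Fatou's lemma combined with assumption \ref{ass: crucial assumption} for the slope integral, and the standard strong-upper-gradient argument to upgrade the energy inequality to equality. The only cosmetic deviations are your explicit backward interpolant $\underline{u}_\tau$ (the paper obtains the same integral by a change of variables applied to $\overline{u}_\tau$, for which the pointwise $\sigma$-convergence is already known, so no extra a.e.\ argument is needed) and your glossing over the discrete Gronwall lemma, which is what actually yields the bound on $d(u_\tau^n,u_\star)$ on bounded time intervals given the merely quadratic-from-below coercivity (\ref{eq: coercive}).
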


\paragraph{Some comments on our assumptions}

Please be aware of the fact that we do not require any lower semicontinuity or compactness property of the single functionals $\phi_\epsilon$. Since we are allowing a relaxed form of minimization (\ref{eq: relaxed form of minimi}) in our scheme, the coercivity (\ref{eq: coercive}) of every functional $\phi_\epsilon$ is sufficient to guarantee the existence of the curves $\overline{u}_\tau$ for small $\tau > 0$. 

Some equi-coercivity and combined compactness assumption on $(\phi_\epsilon)_{\epsilon > 0}$ such as (\ref{eq: coercive}) and (\ref{eq: compact}) in assumption \ref{ass: co} are needed in order to prove the existence of a subsequence $(\overline{u}_{\tau_k})_{k\in \N}$ pointwise $\sigma$-converging to a locally absolutely continuous curve. 

The energy inequality (\ref{eq: energy inequality}) can then be proved by using assumptions \ref{ass: liminf} and \ref{ass: crucial assumption} which make the passage to $\phi$ and $|\partial^- \phi|$ possible. Some kind of assumption appropriately connecting the relaxed slope of $\phi$ with the energy driven motion along the sequence $(\phi_{\epsilon(\tau)})_{\tau > 0}$ with time steps $\tau > 0$ is necessary and our assumption \ref{ass: crucial assumption} will turn out to be a good choice.

\subsection{Proof}

We prove Theorem \ref{thm: main theorem}.

\begin{proof}
The proof divides into three main steps. \\

\framebox{\textbf{Existence of $u$}} The proof of the existence of a subsequence $(\overline{u}_{\tau_k})_{k\in \N}$ pointwise $\sigma$-converging to a locally absolutely continuous curve $u$ follows similar arguments as the proof of the existence of a generalized minimizing movement in the classical scheme along a single functional $\psi$ (\cite{AmbrosioGigliSavare05}, chapter 3). 

We have already noticed that for $n\in\N$ and small $\tau > 0$ there exist $u_\tau^n \in \SS$ satisfying (\ref{eq: relaxed form of minimi}). Moreover, for every $\delta > 0$ it holds that
\begin{eqnarray*}
&&\frac{1}{2} d^2(u_\tau^n, u_\star) - \frac{1}{2} d^2(u_\tau^0, u_\star) = \sum_{j=1}^{n}{\left(\frac{1}{2}d^2(u_\tau^j,u_\star) -\frac{1}{2}d^2(u_\tau^{j-1},u_\star)\right)} \\
&\leq& \sum_{j=1}^{n}{d(u_\tau^j, u_\tau^{j-1}) d(u_\tau^j,u_\star)} \\
&\leq& \delta \sum_{j=1}^{n}{\frac{d^2(u_\tau^j, u_\tau^{j-1})}{2\tau}} \ + \ \frac{1}{2\delta} \sum_{j=1}^{n}{\tau d^2(u_\tau^j, u_\star)} \\
&\leq& \delta \sum_{j=1}^{n}{\left(\phi_{\epsilon(\tau)}(u_\tau^{j-1}) - \phi_{\epsilon(\tau)}(u_\tau^j) + \gamma_\tau \tau \right)} \ + \ \frac{1}{2\delta}\sum_{j=1}^{n}{\tau d^2(u_\tau^j, u_\star)} \\
&\leq& \delta (\phi_{\epsilon(\tau)}(u_\tau^0) + A + B d^2(u_\tau^n, u_\star) + \gamma_\tau n \tau) \ + \ \frac{1}{2\delta} \sum_{j=1}^{n}{\tau d^2(u_\tau^j, u_\star)}.
\end{eqnarray*}
Choose $\delta := \frac{1}{4B}$. Then we have 
\begin{equation*}
d^2(u_\tau^n, u_\star) \ \leq \ 2d^2(u_\tau^0, u_\star) + \frac{1}{B}\phi_{\epsilon(\tau)}(u_\tau^0) + \frac{A}{B} + \frac{\gamma_\tau n \tau}{B} \ + \ \frac{B}{2} \sum_{j=1}^{n}{\tau d^2(u_\tau^j, u_\star)}.
\end{equation*}
By applying the discrete version of Gronwall lemma stated below we obtain that for every $T > 0$ there exists a constant $C > 0$ such that 
\begin{equation*}
d^2(u_\tau^n, u_\star) \leq C 
\end{equation*}
whenever $n\tau \leq T, \ \tau\leq \frac{1}{B}$. 

\begin{lemma}(A discrete version of Gronwall lemma, \cite{AmbrosioGigliSavare05}) Let $A_1, \alpha \in [0, +\infty)$ and, for $n\geq 1$, let $a_n, \tau_n \in[0, +\infty)$ be satisfying
\begin{equation*}
a_n \leq A_1 + \alpha \sum_{j=1}^{n}{\tau_j a_j} \ \forall n\geq 1, \ m:= \sup_{n\in \N} \alpha \tau_n < 1.
\end{equation*}
Then, setting $\beta = \frac{\alpha}{1-m}, \ A_2 := \frac{A_1}{1-m}$ and $\tau_0 = 0$, we have
\begin{equation*}
a_n \leq A_2 e^{\beta\sum_{i=0}^{n-1}{\tau_i}} \ \forall n\geq 1. 
\end{equation*} 
\end{lemma}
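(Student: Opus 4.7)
The plan is to first absorb the $j=n$ term of the sum on the right-hand side into the left-hand side, exploiting the hypothesis $\alpha\tau_n \leq m < 1$. Splitting
\[
\alpha\sum_{j=1}^{n}\tau_j a_j \;=\; \alpha\sum_{j=1}^{n-1}\tau_j a_j \;+\; \alpha\tau_n a_n
\]
and rearranging gives $(1-\alpha\tau_n)\,a_n \leq A_1 + \alpha\sum_{j=1}^{n-1}\tau_j a_j$. Dividing by $1-\alpha\tau_n \geq 1-m > 0$ yields the cleaner implicit-free auxiliary recursion
\[
a_n \;\leq\; A_2 + \beta\sum_{j=1}^{n-1}\tau_j a_j,
\]
with the constants $A_2 = A_1/(1-m)$ and $\beta = \alpha/(1-m)$ of the statement.

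Next I would proceed by induction on $n$, writing $S_k := \sum_{i=0}^{k}\tau_i$. The base case $n=1$ reduces to $a_1 \leq A_2$, which matches the claimed bound $A_2 e^{\beta S_0} = A_2$ thanks to the convention $\tau_0 = 0$ (the sum over $j=1,\ldots,0$ being empty). For the induction step, assume $a_j \leq A_2\,e^{\beta S_{j-1}}$ for all $1\leq j\leq n-1$. Substituting into the auxiliary recursion gives
\[
a_n \;\leq\; A_2 + A_2\,\beta\sum_{j=1}^{n-1}\tau_j\,e^{\beta S_{j-1}}.
\]

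The key analytic ingredient is a one-step discrete-exponential comparison: for each $j$,
\[
\beta\tau_j\,e^{\beta S_{j-1}} \;\leq\; e^{\beta S_j} - e^{\beta S_{j-1}},
\]
which follows from the convexity estimate $e^x - e^y \geq (x-y)\,e^y$ for $x\geq y$ applied with $x = \beta S_j$, $y = \beta S_{j-1}$. Summing over $j = 1,\ldots,n-1$ the right-hand side telescopes to $e^{\beta S_{n-1}} - e^{\beta S_0} = e^{\beta S_{n-1}} - 1$, again using $S_0 = 0$. Plugging back gives
\[
a_n \;\leq\; A_2 + A_2\bigl(e^{\beta S_{n-1}} - 1\bigr) \;=\; A_2\,e^{\beta S_{n-1}},
\]
which closes the induction and produces exactly the asserted bound.

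There is no real obstacle here; the only genuine idea is the initial absorption step, which requires $\alpha\tau_n < 1$ strictly and uniformly (this is why the hypothesis $m<1$ is imposed rather than merely $\alpha\tau_n < 1$ for each $n$). Once this reduces the implicit inequality to the explicit one $a_n \leq A_2 + \beta\sum_{j<n}\tau_j a_j$, the rest is the standard discrete Gronwall induction with the convexity-based telescoping estimate.
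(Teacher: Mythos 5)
Your proof is correct: the absorption of the $j=n$ term using $\alpha\tau_n\leq m<1$, followed by induction with the convexity estimate $e^{x}-e^{y}\geq(x-y)e^{y}$ and telescoping, closes the argument without gaps. The paper itself does not prove this lemma but only cites \cite{AmbrosioGigliSavare05}, and your argument is essentially the standard one given there, so there is nothing further to compare.
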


In addition, the following estimates hold
\begin{equation*}
\phi_{\epsilon(\tau)}(u_\tau^n) \ \leq \ \gamma_\tau n \tau + \phi_{\epsilon(\tau)}(u_\tau^0)
\end{equation*}
and
\begin{eqnarray*}
\sum_{j=1}^{n}{\frac{d^2(u_\tau^j, u_\tau^{j-1})}{2\tau´}} &\leq& \phi_{\epsilon(\tau)}(u_\tau^0) - \phi_{\epsilon(\tau)}(u_\tau^n) + \gamma_\tau n \tau \\
&\leq& \phi_{\epsilon(\tau)}(u_\tau^0) + A + Bd^2(u_\tau^n, u_\star) + \gamma_\tau n\tau.
\end{eqnarray*}

We define $|U'_\tau|: [0, +\infty) \rightarrow [0, +\infty)$ by 
\begin{equation*}
|U'_\tau|(t) := \frac{d(u_\tau^j, u_\tau^{j-1})}{\tau} \text{ if } t\in ((j-1)\tau, j\tau].
\end{equation*}
The last estimate shows that there exist $\tau_k \downarrow 0, \ \cA\in L^2_{loc}([0, +\infty))$ such that 
\begin{equation*}
|U'_{\tau_k}| \text{ converges weakly in } L^2(0,T) \text{ to } \cA \text{ for all } T > 0.
\end{equation*}

Now, we can apply the refined version of Ascoli-Arzel\`a theorem stated below to conclude from the preceding estimates and from (\ref{eq: compact}) that there exist a further subsequence again denoted by $(\tau_k)_{k\in \N}$ and a curve $u: [0, +\infty) \rightarrow \SS$ such that
\begin{equation*}
\overline{u}_{\tau_k}(t) \stackrel{\sigma}{\weakto} u(t) \text{ for all } t\geq 0.
\end{equation*}
Doing so we choose
\begin{equation*}
\omega (s,t) := \int_{s}^{t}{\cA (r) \ dr}
\end{equation*}
since for $0\leq s\leq t$ we have 
\begin{equation*}
\mathop{\limsup}_{k\to \infty} d(\overline{u}_{\tau_k}(s), \overline{u}_{\tau_k}(t)) \ \leq \ \mathop{\limsup}_{k\to \infty} \int_{s}^{t+\tau_k}{|U'_{\tau_k}|(r) \ dr} \ = \ \int_{s}^{t}{\cA(r) \ dr}.
\end{equation*}

\begin{lemma}(A refined version of Ascoli-Arzel\`a theorem, \cite{AmbrosioGigliSavare05}) Let $T > 0$, let $K \subset \SS$ be a sequentially compact set w.r.t. $\sigma$, and let $u_n: [0, T] \rightarrow \SS$ be curves such that 
\begin{eqnarray*}
u_n(t) \in K \text{   } \forall n\in \N, \ t\in [0, T], \\
\mathop{\limsup}_{n\to \infty} d(u_n(s), u_n(t)) \leq \omega (s,t) \text{   } \forall s,t \in[0, T], 
\end{eqnarray*}
for a (symmetric) function $\omega: [0, T] \times [0, T] \rightarrow [0, +\infty)$, such that
\begin{equation*}
\mathop{\lim}_{(s,t) \to (r,r)} \omega (s,t) = 0 \ \forall r\in [0, T]\setminus \CC,  
\end{equation*}
where $\CC$ is an (at most) countable subset of $[0, T]$. 

Then there exist $n_k \uparrow +\infty, \ u: [0, T] \rightarrow \SS$ such that 
\begin{equation*}
u_{n_k}(t) \stackrel{\sigma}{\weakto} u(t) \ \forall t\in [0, T], \ u \text{ is } d\text{-continuous in } [0, T]\setminus \CC. 
\end{equation*}
\end{lemma}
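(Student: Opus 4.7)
The plan is to combine a Cantor diagonal extraction on a countable dense set of times with the completeness of $(\SS,d)$, using the first compatibility axiom between $\sigma$ and $d$ (the $\sigma$-sequential lower semicontinuity of $d$) to transfer the $\limsup$-bound on $d(u_n(s),u_n(t))$ into a genuine modulus of continuity on the candidate limit.

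\textbf{Step 1 (diagonal extraction on a countable dense set).} First I would fix a countable set $D\subset[0,T]$ that is dense and contains $\CC\cup\{0,T\}$. Enumerate $D=\{t_1,t_2,\dots\}$. Since each $u_n(t_i)\in K$ and $K$ is sequentially $\sigma$-compact, a standard Cantor diagonal procedure extracts a subsequence, still denoted $(u_{n_k})_k$, such that for every $t\in D$ there exists $u(t)\in K$ with $u_{n_k}(t)\stackrel{\sigma}{\weakto}u(t)$.

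\textbf{Step 2 (modulus on $D$ and $d$-Cauchy extension).} For $s,t\in D$, the first compatibility axiom yields
\begin{equation*}
d(u(s),u(t))\leq\liminf_k d(u_{n_k}(s),u_{n_k}(t))\leq\limsup_n d(u_n(s),u_n(t))\leq\omega(s,t).
\end{equation*}
For $r\in[0,T]\setminus D$ (so in particular $r\notin\CC$) and any $D\ni t_l\to r$, the hypothesis $\omega(s,t)\to 0$ as $(s,t)\to(r,r)$ forces $(u(t_l))_l$ to be $d$-Cauchy. Completeness of $(\SS,d)$ then furnishes a $d$-limit, independent of the approximating sequence, which I take as the definition of $u(r)$. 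Points of $\CC$ need no further extraction since $\CC\subset D$.

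\textbf{Step 3 ($\sigma$-convergence at every $t\in[0,T]$ and $d$-continuity off $\CC$).} The delicate point is pointwise $\sigma$-convergence at $r\in[0,T]\setminus D$: no natural subsequence picks out $u(r)$. I would argue by uniqueness of $\sigma$-cluster points. Because $(u_{n_k}(r))_k$ lies in the $\sigma$-sequentially compact set $K$, every subsequence has a further $\sigma$-convergent sub-subsequence $u_{n_{k_j}}(r)\stackrel{\sigma}{\weakto}v$. For any $t_l\in D$, the first compatibility axiom applied jointly to $u_{n_{k_j}}(t_l)\stackrel{\sigma}{\weakto}u(t_l)$ and $u_{n_{k_j}}(r)\stackrel{\sigma}{\weakto}v$ gives
\begin{equation*}
d(u(t_l),v)\leq\liminf_j d(u_{n_{k_j}}(t_l),u_{n_{k_j}}(r))\leq\omega(t_l,r).
\end{equation*}
Sending $t_l\to r$ and using Step 2 ($u(t_l)\to u(r)$ in $d$) together with the triangle inequality forces $v=u(r)$. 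Hausdorffness of $\sigma$ (plus this uniqueness of cluster points) implies $u_{n_k}(r)\stackrel{\sigma}{\weakto}u(r)$. The $d$-continuity on $[0,T]\setminus\CC$ is then obtained from the same $\liminf$-trick: for $r_j\to r$ with $r\notin\CC$,
\begin{equation*}
d(u(r_j),u(r))\leq\liminf_k d(u_{n_k}(r_j),u_{n_k}(r))\leq\omega(r_j,r)\to 0.
\end{equation*}

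\textbf{Main obstacle.} The hard part is Step 3: extending convergence from the countable dense set $D$ to every $t\in[0,T]$ simultaneously for a single subsequence. It requires $\sigma$-compactness of $K$ to manufacture cluster points, the interplay of the first compatibility axiom with the modulus $\omega$ to pin those cluster points down, and completeness of $d$ to provide the candidate limit $u(r)$ in the first place; the set $\CC$ collects precisely those times where this argument is allowed to fail.
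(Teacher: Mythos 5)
The paper does not prove this lemma --- it is quoted verbatim from \cite{AmbrosioGigliSavare05} (Proposition 3.3.1 there) --- and your argument is correct and is essentially the standard proof given in that reference: diagonal extraction over a countable dense set containing $\CC$, definition of $u$ off that set via $d$-Cauchy completion using the modulus $\omega$, and the sub-subsequence/uniqueness-of-cluster-points argument combining sequential $\sigma$-compactness of $K$ with the $\sigma$-lower semicontinuity of $d$. No gaps.
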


It remains to prove that $u$ is locally absolutely continuous. Using the $\sigma$-lower semicontinuity of the distance $d$ we obtain for all $0\leq s\leq t$ 
\begin{eqnarray*}
d(u(s),u(t)) \ \leq \ \mathop{\liminf}_{k\to \infty} d(\overline{u}_{\tau_k}(s), \overline{u}_{\tau_k}(t)) \ \leq \ \int_{s}^{t}{\cA(r) \ dr}. 
\end{eqnarray*}
Thus, $u$ is locally absolutely continuous with $|u'| \leq \cA \ \LL^1$-a.e. and 
\begin{equation}\label{eq: derivative of u}
\int_{0}^{t}{|u'|^2(s) \ ds} \ \leq \ \mathop{\liminf}_{k\to \infty} \int_{0}^{t}{|U'_{\tau_k}|^2(s) \ ds} 
\end{equation}
for all $t \geq 0$.\\

\framebox{\textbf{Energy inequality}} We prove (\ref{eq: energy inequality}). Let $t > 0$ be given. 

For $\tau > 0$ fixed, we choose $N_\tau \in \N$ such that $t\in ((N_\tau-1)\tau, N_\tau \tau]$. Then we have
\begin{eqnarray*}
&& \phi_{\epsilon(\tau)}(u_\tau^0) - \phi_{\epsilon(\tau)}(\overline{u}_{\tau}(t)) \ = \ \sum_{j=1}^{N_\tau}{\left(\phi_{\epsilon(\tau)}(u_\tau^{j-1}) - \phi_{\epsilon(\tau)}(u_\tau^j)\right)} \\
&\geq& \sum_{j=1}^{N_\tau}{\left(\phi_{\epsilon(\tau)}(u_\tau^{j-1}) - \cY_\tau \phi_{\epsilon(\tau)}(u_\tau^{j-1}) - \gamma_\tau \tau \right)} \ + \ \sum_{j=1}^{N_\tau}{\frac{1}{2\tau}d^2(u_\tau^j, u_\tau^{j-1})} \\
&=& \sum_{j=1}^{N_\tau}{\left(\int_{0}^{\tau}{\frac{\phi_{\epsilon(\tau)}(u_\tau^{j-1}) - \cY_\tau \phi_{\epsilon(\tau)}(u_\tau^{j-1})}{\tau} \ d\eta} +  \frac{1}{2}\int_{(j-1)\tau}^{j\tau}{|U'_\tau|^2(r) \ dr}  -  \gamma_\tau \tau \right)}. 
\end{eqnarray*}
Since for $j\geq 2$ it holds that
\begin{equation*}
u_\tau^{j-1} \ = \ \overline{u}_{\tau}(\underbrace{(j-2)\tau + \eta}_{=: s}) \text{ for } 0 < \eta < \tau,
\end{equation*}
we obtain 
\begin{equation*}
\int_{0}^{\tau}{\frac{\phi_{\epsilon(\tau)}(u_\tau^{j-1}) - \cY_\tau \phi_{\epsilon(\tau)}(u_\tau^{j-1})}{\tau} \ d\eta} \ = \ \int_{(j-2)\tau}^{(j-1)\tau}{\frac{\phi_{\epsilon(\tau)}(\overline{u}_\tau(s)) - \cY_\tau \phi_{\epsilon(\tau)}(\overline{u}_\tau(s))}{\tau} \ ds}
\end{equation*} 
for $2 \leq j \leq N_\tau$.\\

Thus, the following inequality holds
\begin{eqnarray*}
\phi_{\epsilon(\tau)}(u_\tau^0) - \phi_{\epsilon(\tau)}(\overline{u}_\tau(t)) \ \geq \ \int_{0}^{(N_\tau-1)\tau}{\frac{\phi_{\epsilon(\tau)}(\overline{u}_\tau(s)) - \cY_\tau \phi_{\epsilon(\tau)}(\overline{u}_\tau(s))}{\tau} \ ds} \\ + \ \frac{1}{2}\int_{0}^{t}{|U'_\tau|^2(s) \ ds} \ - \ \gamma_\tau N_\tau \tau.
\end{eqnarray*}
 
Now, let $\tau_k \downarrow 0, \ u: [0, +\infty) \rightarrow \SS$ satisfy (\ref{eq: convergence}) (for details see the first part of the proof). Assumption \ref{ass: liminf} and (\ref{eq: recovery sequence}) imply that 
\begin{equation*}
\phi(u(0)) - \phi(u(t)) \ \geq \ \mathop{\limsup}_{k\to \infty} \left(\phi_{\epsilon(\tau_k)}(u_{\tau_k}^{0}) - \phi_{\epsilon(\tau_k)}(\overline{u}_{\tau_k}(t))\right). 
\end{equation*}
All in all we obtain
\begin{eqnarray*}
\phi(u(0)) - \phi(u(t)) \ \geq \ \mathop{\liminf}_{k\to\infty} \int_{0}^{(N_{\tau_k}-1)\tau_k}{\frac{\phi_{\epsilon(\tau_k)}(\overline{u}_{\tau_k}(s)) - \cY_{\tau_k} \phi_{\epsilon(\tau_k)}(\overline{u}_{\tau_k}(s))}{\tau_k} \ ds} \\ + \ \mathop{\liminf}_{k\to\infty} \frac{1}{2}\int_{0}^{t}{|U'_{\tau_k}|^2(s) \ ds}. 
\end{eqnarray*}
We used the fact that $N_\tau \tau \to t$ and thus $\gamma_\tau N_\tau \tau \to 0$ $(\tau\to 0)$.\\ 

The energy inequality (\ref{eq: energy inequality})
\begin{equation*}
\phi(u(0)) - \phi(u(t)) \ \geq \ \frac{1}{2} \int^{t}_{0}{|\partial^- \phi|^2 (u(s)) \ ds} \ + \ \frac{1}{2} \int^{t}_{0}{|u'|^2 (s) \ ds}
\end{equation*}
follows by applying (\ref{eq: derivative of u}), Fatou's Lemma and assumption \ref{ass: crucial assumption}. \\

\framebox{\textbf{$u$ is a curve of maximal slope}} 
Let $u$ be a locally absolutely continuous curve satisfying (\ref{eq: convergence}) - (\ref{eq: energy inequality}) and let $|\partial^- \phi|$ be a strong upper gradient for $\phi$. Then, by definition, for all $t > 0$ 
it holds that
\begin{eqnarray*}
\phi(u(0)) - \phi(u(t)) &\leq& \int_{0}^{t}{|\partial^- \phi|(u(r)) |u'|(r) \ dr} \\
&\leq& \frac{1}{2} \int_{0}^{t}{|\partial^- \phi|^2(u(r)) \ dr} \ + \ \frac{1}{2} \int_{0}^{t}{|u'|^2(r) \ dr}.
\end{eqnarray*}
Hence, since by (\ref{eq: energy inequality}) also the reverse inequality holds, we obtain 
\begin{equation}\label{eq: energy equality}
\phi(u(s)) - \phi(u(t)) \ = \ \frac{1}{2} \int_{s}^{t}{|\partial^- \phi|^2(u(r)) \ dr} \ + \ \frac{1}{2} \int_{s}^{t}{|u'|^2(r) \ dr}
\end{equation}
for all $0 \leq s \leq t$.\\ 

The proof of Theorem \ref{thm: main theorem} is complete.
\end{proof} 

\subsection{Supplement to Theorem \ref{thm: main theorem}}\label{subsec: supplement to the theorem}

\paragraph{Convergence condition}

The following convergence condition (\ref{eq: convergence condition}) in case $|\partial^- \phi|$ is only a weak upper gradient for $\phi$ is the counterpart of the continuity condition (\ref{eq: continuity condition}) in section \ref{subsec: MM}: 
\begin{equation}\label{eq: convergence condition}
v_{n}\stackrel{\sigma}{\weakto}v, \ \sup_{n\in\N}\left\{\chi_{\epsilon_n, \tau_n}(v_{n}), d(v_{n},v), \phi_{\epsilon_n}(v_{n})\right\} < +\infty \ \Rightarrow \ \phi_{\epsilon_n}(v_{n})\to \phi(v)  
\end{equation}
for $\tau_n \to 0 \ (\tau_n > 0)$, with $\chi_{\epsilon, \tau}(\cdot) := \frac{\phi_{\epsilon}(\cdot) - \cY_\tau \phi_\epsilon(\cdot)}{\tau}$ and $\epsilon_n := \epsilon(\tau_n)$.

\begin{proposition}\label{prop: supplement to the theorem}
Let the assumptions \ref{ass: co}, \ref{ass: liminf} and \ref{ass: crucial assumption}, and in addition the convergence condition (\ref{eq: convergence condition}) be satisfied. Let $u: [0, +\infty)\rightarrow \SS$ be constructed according to (\ref{eq: convergence}) in Theorem \ref{thm: main theorem}.  
Then the energy dissipation inequality
\begin{equation}\label{eq: refined energy inequality}
\phi(u(s)) - \phi(u(t)) \ \geq \ \frac{1}{2} \int^{t}_{s}{|\partial^- \phi|^2 (u(r)) \ dr} \ + \ \frac{1}{2} \int^{t}_{s}{|u'|^2 (r) \ dr} 
\end{equation}
holds for all $t \geq 0$ and a.e. $s\in (0, t)$.

In particular, if $|\partial^-\phi|$ is a weak upper gradient for $\phi$, then $u$ is a curve of maximal slope for $\phi$ with respect to $|\partial^-\phi|$. 
\end{proposition}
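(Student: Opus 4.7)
The plan is to reproduce the telescoping energy-dissipation computation from the proof of Theorem \ref{thm: main theorem}, but starting the discrete sum at a generic time $s>0$ rather than at $0$. Theorem \ref{thm: main theorem} only gives the energy inequality starting from $s=0$ because the well-preparedness (\ref{eq: recovery sequence}) of the initial values yields $\phi_{\epsilon(\tau_k)}(u_{\tau_k}^0)\to\phi(u^0)$ automatically. For a generic $s>0$ no such preparation is available, and the central task will be to establish, for a.e. $s\in(0,t)$, the pointwise convergence $\phi_{\epsilon(\tau_k)}(\overline{u}_{\tau_k}(s))\to\phi(u(s))$ along the subsequence produced in Theorem \ref{thm: main theorem}. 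This is exactly where the convergence condition (\ref{eq: convergence condition}) is invoked.

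Fix $t>0$ and, for each $\tau>0$, let $M_\tau\in\N$ satisfy $s\in((M_\tau-1)\tau,M_\tau\tau]$, so that $\overline{u}_\tau(s)=u_\tau^{M_\tau}$. Running the telescoping estimate from the main theorem, but with the sum indexed from $j=M_\tau+1$ to $N_\tau$, gives
\[
\phi_{\epsilon(\tau)}(\overline{u}_\tau(s)) - \phi_{\epsilon(\tau)}(\overline{u}_\tau(t))\ \geq\ \int_{(M_\tau-1)\tau}^{(N_\tau-1)\tau}\chi_{\epsilon(\tau),\tau}(\overline{u}_\tau(r))\,dr \ +\ \tfrac{1}{2}\int_{M_\tau\tau}^{N_\tau\tau}|U'_\tau|^2(r)\,dr \ -\ \gamma_\tau(N_\tau-M_\tau)\tau.
\]
Since $\chi_{\epsilon,\tau}\geq 0$ (as $\cY_\tau\phi_\epsilon\leq\phi_\epsilon$ trivially) and since the proof of Theorem \ref{thm: main theorem} provides a uniform-in-$k$ upper bound on $\int_0^t\chi_{\epsilon(\tau_k),\tau_k}(\overline{u}_{\tau_k}(r))\,dr$ (via the $s=0$ version of the inequality together with the lower bound on $\phi_{\epsilon(\tau_k)}(\overline{u}_{\tau_k}(t))$ coming from (\ref{eq: coercive})), Fatou's lemma yields $\liminf_{k\to\infty}\chi_{\epsilon(\tau_k),\tau_k}(\overline{u}_{\tau_k}(s))<+\infty$ for a.e. $s\in(0,t)$. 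Fix such an $s$, extract a further subsequence (still denoted $(\tau_k)$) along which this liminf is attained, and note that the uniform bounds $\phi_{\epsilon(\tau_k)}(u_{\tau_k}^{M_{\tau_k}})\leq \phi_{\epsilon(\tau_k)}(u_{\tau_k}^0)+\gamma_{\tau_k}M_{\tau_k}\tau_k$ and $\sup_k d(\overline{u}_{\tau_k}(s),u(s))<+\infty$ are already provided by the proof of the main theorem. The convergence condition (\ref{eq: convergence condition}) then applies at $v_k=\overline{u}_{\tau_k}(s)$ and delivers $\phi_{\epsilon(\tau_k)}(\overline{u}_{\tau_k}(s))\to\phi(u(s))$.

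The limit passage in the displayed inequality is now identical to the corresponding passage in Theorem \ref{thm: main theorem}: the term at time $t$ is handled by assumption \ref{ass: liminf}; Fatou combined with assumption \ref{ass: crucial assumption} produces $\tfrac{1}{2}\int_s^t|\partial^-\phi|^2(u(r))\,dr$; and weak $L^2$-lower semicontinuity of $|U'_{\tau_k}|$, together with $M_{\tau_k}\tau_k\to s$, $N_{\tau_k}\tau_k\to t$ and $\gamma_{\tau_k}(N_{\tau_k}-M_{\tau_k})\tau_k\to 0$, gives $\tfrac{1}{2}\int_s^t|u'|^2(r)\,dr$. This establishes (\ref{eq: refined energy inequality}). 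Under the weak-upper-gradient hypothesis, the curve-of-maximal-slope property is then obtained by the standard reverse-inequality argument: (\ref{eq: refined energy inequality}) shows that $\phi\circ u$ has finite pointwise variation on bounded subintervals, so the definition of weak upper gradient yields the opposite inequality and hence equality. The main obstacle is the $s$-dependence of the subsequence along which we verify the hypotheses of (\ref{eq: convergence condition}); but since the final estimate is needed only pointwise on a full-measure set of $s$, this diagonal extraction is harmless.
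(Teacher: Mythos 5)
Your proposal is correct and follows essentially the same route as the paper's own (much terser) proof: the paper likewise uses the integral bound on $\chi_{\epsilon(\tau),\tau}(\overline{u}_\tau(\cdot))$ from the second step of the proof of Theorem \ref{thm: main theorem} to extract, for a.e.\ $s\in(0,t)$, a further ($s$-dependent) subsequence along which $\chi_{\epsilon(\tau_{k_l}),\tau_{k_l}}(\overline{u}_{\tau_{k_l}}(s))$ stays bounded, then invokes the convergence condition (\ref{eq: convergence condition}) to obtain $\phi_{\epsilon(\tau_{k_l})}(\overline{u}_{\tau_{k_l}}(s))\to\phi(u(s))$ and reruns the telescoping and limit passage starting from $s$ instead of $0$. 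Your additional details (the Fatou argument producing the a.e.\ finiteness, the discrete index bookkeeping, and the handling of the shifted endpoints $M_{\tau_k}\tau_k\to s$, $N_{\tau_k}\tau_k\to t$) are consistent with, and merely flesh out, what the paper summarizes as ``the same arguments as in the proof of the energy inequality.''
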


\begin{proof}
Note that the calculations in the second step of the proof of Theorem \ref{thm: main theorem} show that for every $t > 0$ and for $\LL^1$-a.e. $s\in (0,t)$ there exists a further subsequence $(\overline{u}_{\tau_{k_l}})_{l\in \N}$ such that
\begin{equation*}
\sup_{l\in \N} \chi_{\epsilon(\tau_{k_l}), \tau_{k_l}}(\overline{u}_{\tau_{k_l}}(s)) < +\infty.
\end{equation*} 
Then, by using (\ref{eq: convergence condition}) and the same arguments as in the proof of the energy inequality (\ref{eq: energy inequality}), we can conclude.
\end{proof}

\paragraph{Generalization to a non-uniform distribution of the error}

The proof of Theorem \ref{thm: main theorem} shows that we may replace the error $\gamma_\tau\tau$ in the approximate minimization problem (\ref{eq: relaxed form of minimi}) by a more general $\gamma_\tau^{(n)}$ depending on $n\in\N$. Indeed, the convergence 
\begin{equation}
\sum_{j=1}^{N_\tau}{\gamma_\tau^{(j)}}\to 0 \text{ as } \tau\to 0
\end{equation} 
for $N_\tau\in \N$ as in the second step of the proof, is sufficient for our purposes.\\   

Hence, we can extend our theory to a non-uniform distribution of the error and to the following relaxed minimizing movement scheme along $(\phi_\epsilon)_{\epsilon > 0}$:\\
 
The sequence of initial values $u_\tau^0 \stackrel{\sigma}{\weakto} u^0 \in D(\phi) \ (\tau\to 0)$ satisfies
\begin{equation}\label{eq: non-uniform recovery sequence}
\sup_\tau d(u_\tau^0, u^0) < +\infty, \ \phi_{\epsilon(\tau)}(u_\tau^0) \to \phi(u^0)
\end{equation}
and $u_\tau^n \ (\tau > 0, n\in \N)$ satisfies
\begin{equation}\label{eq: non-uniform relaxed form of minimi}
\phi_{\epsilon(\tau)}(u_\tau^n) + \frac{1}{2\tau} d^2(u_\tau^n, u_\tau^{n-1}) \ \leq \ \inf_{v\in \SS} \left\{\phi_{\epsilon(\tau)}(v) + \frac{1}{2\tau} d^2(v,u_\tau^{n-1}) \right\} + \gamma_\tau^{(n)}
\end{equation}
with some $(\gamma_\tau^{(n)})_{n\in\N}$, $\gamma_\tau^{(n)} > 0$ and $\sum_{j=1}^{N_\tau}{\gamma_\tau^{(j)}} \to 0$ as $\tau\to0$ whenever $(N_\tau\tau)_{\tau > 0}$ is bounded.\\

For the sake of clear presentation, we come back to considering a uniform distribution of the error in the remaining part of the paper. 
Note that all the results in this paper can be obtained for a non-uniform distribution of the error as well.

\newpage
\section{The case $\phi_\epsilon = \phi_{\tilde{\epsilon}}$ for all $\epsilon, \tilde{\epsilon} > 0$}\label{sec: constant sequences}

The special case in which we have $\phi_\epsilon = \phi_{\tilde{\epsilon}}$ for all $\epsilon, \tilde{\epsilon} > 0$ is worth considering in order to better understand Theorem \ref{thm: main theorem} and our main assumption \ref{ass: crucial assumption}. Moreover, we will obtain interesting relaxation results. 

\subsection{Assumption \ref{ass: crucial assumption} with regard to a single functional}\label{subsec: An important proposition}

We give a positive answer to the question if there is an interrelation corresponding to assumption \ref{ass: crucial assumption} between the (relaxed) minimizing movement scheme along a single functional $\psi: \SS \rightarrow (-\infty, +\infty]$ and its relaxed slope . 

\begin{proposition}\label{prop: important proposition}
Let $\psi: \SS \rightarrow (-\infty, +\infty]$, $D(\psi) \neq \emptyset,$ satisfy the following assumptions: 
\begin{equation}
\psi \text{ is } d\text{-lower semicontinuous}, 
\end{equation}
and for small $\tau > 0$ it holds that
\begin{equation}\label{eq: existence of minimizers}
J_\tau \psi [u] \neq \emptyset \text{ for all } u\in\SS.
\end{equation}
Then 
whenever $u, u_\tau\in \SS \ (\tau > 0)$ satisfy 
\begin{equation*}
u_\tau \stackrel{\sigma}{\weakto} u, \ \sup_\tau \left\{\psi(u_\tau), d(u_\tau, u)\right\} < +\infty
\end{equation*}
it holds that
\begin{equation}\label{ass: crucial assumption for a single functional}
\mathop{\liminf}_{\tau \to 0} \frac{\psi(u_\tau) - \cY_\tau \psi(u_\tau)}{\tau} \ \geq \ \frac{1}{2} |\partial^- \psi|^2 (u).
\end{equation} 
\end{proposition}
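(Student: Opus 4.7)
The plan is to invoke De Giorgi's variational interpolation, together with the definition of the relaxed slope and Fatou's lemma. As a preliminary observation, note that the mere existence of a minimizer in $J_{\tau_0}\psi[u_0]$ for some $u_0\in\SS$ and some small $\tau_0>0$ forces a coercivity bound $\psi(\cdot)\geq -A-B\,d^2(\cdot,u_0)$, with $A=-\cY_{\tau_0}\psi(u_0)$ and $B=1/(2\tau_0)$, because $\psi(v)+\frac{1}{2\tau_0}d^2(v,u_0)\geq\cY_{\tau_0}\psi(u_0)$ for every $v\in\SS$. We will use this below to obtain a priori bounds.

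For each $\tau>0$ and each $s\in(0,\tau]$, select $\tilde u_{\tau,s}\in J_s\psi[u_\tau]$. By De Giorgi's variational interpolation (see \cite{AmbrosioGigliSavare05}), which is valid under $d$-lower semicontinuity of $\psi$, we have
\begin{equation*}
\psi(u_\tau)-\cY_\tau\psi(u_\tau)\;=\;\int_0^\tau\frac{d^2(\tilde u_{\tau,s},u_\tau)}{2s^2}\,ds.
\end{equation*}
Testing the minimality of $\tilde u_{\tau,s}$ against $v\to\tilde u_{\tau,s}$ and using the triangle inequality for $d^2(v,u_\tau)-d^2(\tilde u_{\tau,s},u_\tau)$ yields the classical bound $|\partial\psi|(\tilde u_{\tau,s})\leq d(\tilde u_{\tau,s},u_\tau)/s$. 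After the rescaling $s=r\tau$, we deduce
\begin{equation*}
\frac{\psi(u_\tau)-\cY_\tau\psi(u_\tau)}{\tau}\;\geq\;\frac{1}{2}\int_0^1|\partial\psi|^2(\tilde u_{\tau,r\tau})\,dr.
\end{equation*}

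The core step is, for each fixed $r\in(0,1]$, the pointwise liminf
\begin{equation*}
\liminf_{\tau\to 0}|\partial\psi|(\tilde u_{\tau,r\tau})\;\geq\;|\partial^-\psi|(u).
\end{equation*}
From minimality we get $\psi(\tilde u_{\tau,r\tau})\leq\psi(u_\tau)$ and $d^2(\tilde u_{\tau,r\tau},u_\tau)/(2r\tau)\leq\psi(u_\tau)-\psi(\tilde u_{\tau,r\tau})$; substituting the coercivity bound and absorbing the resulting quadratic term $B\,d^2(\tilde u_{\tau,r\tau},u_\tau)$ (possible once $r\tau<1/(8B)$) gives a standard a priori estimate
\begin{equation*}
d^2(\tilde u_{\tau,r\tau},u_\tau)\;\leq\;C\,r\tau\;\longrightarrow\;0\qquad(\tau\to 0),
\end{equation*}
where $C$ depends only on $\sup_\tau\psi(u_\tau)$, $\sup_\tau d(u_\tau,u)$, $A$ and $B$. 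By the compatibility of $\sigma$ with $d$, this forces $\tilde u_{\tau,r\tau}\stackrel{\sigma}{\weakto}u$, together with $\sup_\tau d(\tilde u_{\tau,r\tau},u)<+\infty$ and $\sup_\tau\psi(\tilde u_{\tau,r\tau})\leq\sup_\tau\psi(u_\tau)<+\infty$, so the very definition of $|\partial^-\psi|(u)$ delivers the desired pointwise liminf.

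Applying Fatou's lemma and combining with the chain of inequalities above, we conclude
\begin{equation*}
\liminf_{\tau\to 0}\frac{\psi(u_\tau)-\cY_\tau\psi(u_\tau)}{\tau}\;\geq\;\frac{1}{2}\int_0^1|\partial^-\psi|^2(u)\,dr\;=\;\frac{1}{2}|\partial^-\psi|^2(u),
\end{equation*}
which is (\ref{ass: crucial assumption for a single functional}). The main obstacle is the uniform-in-$\tau$ control on $d(\tilde u_{\tau,r\tau},u_\tau)$ needed to place $\tilde u_{\tau,r\tau}$ inside the admissible class in the definition of $|\partial^-\psi|(u)$; the coercivity implicit in the existence of Moreau-Yosida minimizers is precisely the ingredient that makes the quadratic absorption go through.
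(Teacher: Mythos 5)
Your proposal is correct and follows essentially the same route as the paper: it derives the coercivity bound from the existence of Moreau--Yosida minimizers, selects minimizers $\tilde u_{\tau,s}\in J_s\psi[u_\tau]$ (the paper's $v_{\pi\cdot\tau}$ with $s=\pi\tau$), shows $d(\tilde u_{\tau,s},u_\tau)\to0$ by the same absorption argument so that these points are admissible in the definition of $|\partial^-\psi|(u)$, and combines the slope bound $|\partial\psi|(\tilde u_{\tau,s})\leq d(\tilde u_{\tau,s},u_\tau)/s$, De Giorgi's variational interpolation identity and Fatou's lemma exactly as in the paper's proof.
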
 

\begin{proof}
Note that (\ref{eq: existence of minimizers}) implies that there exist $A, B > 0, \ u_\star \in \SS$ such that 
\begin{equation*}
\psi(\cdot) \geq -A -Bd^2(\cdot, u_\star).
\end{equation*}
Let $u, u_\tau\in \SS \ (\tau > 0)$ satisfy 
\begin{equation*}
u_\tau \stackrel{\sigma}{\weakto} u, \ \sup_\tau \left\{\psi(u_\tau), d(u_\tau, u)\right\} < +\infty. 
\end{equation*}
For $\pi \in (0, 1)$ we set $v_{\pi \cdot\tau} \in J_{\pi \cdot \tau} \psi[u_\tau]$. We have
\begin{eqnarray*}
d^2(v_{\pi\cdot\tau}, u_\tau) &\leq& 2\pi \tau(\psi(u_\tau) - \psi(v_{\pi \cdot\tau})) \\
&\leq& 2\pi \tau (\psi(u_\tau) + A + Bd^2(v_{\pi\cdot\tau}, u_\star)) \\
\end{eqnarray*}
leading to 
\begin{eqnarray*}
d^2(v_{\pi \cdot\tau}, u_\tau)[1-2\pi\tau B] - 4\pi\tau Bd(u_\tau, u_\star) d(v_{\pi\cdot\tau}, u_\tau) \\ \leq \ 2\pi \tau \psi(u_\tau) + 2\pi \tau A + 2\pi \tau Bd^2(u_\tau, u_\star).
\end{eqnarray*}
We can conclude that
\begin{equation*}
d(v_{\pi\cdot\tau}, u_\tau) \to 0 \text{ as } \tau \to 0
\end{equation*}
and $(v_{\pi\cdot\tau})_{\tau > 0}$ is an admissible test sequence in the definition of $|\partial^- \psi|(u)$.\\

Now, it holds that
\begin{eqnarray*}
\frac{1}{2}|\partial^- \psi|^2(u) &=& \frac{1}{2}\int_{0}^{1}{|\partial^- \psi|^2(u) \ d\pi} \ \leq \ \frac{1}{2}\int_{0}^{1}{\mathop{\liminf}_{\tau \to 0} |\partial \psi|^2(v_{\pi\cdot\tau}) \ d\pi} \\
&\leq& \frac{1}{2}\int_{0}^{1}{\mathop{\liminf}_{\tau\to 0} \frac{d^2(v_{\pi\cdot\tau}, u_\tau)}{(\pi \cdot \tau)^2} \ d\pi} \ \leq \ \mathop{\liminf}_{\tau\to 0} \frac{1}{2}\int_{0}^{1}{\frac{d^2(v_{\pi\cdot\tau}, u_\tau)}{(\pi \cdot \tau)^2} \ d\pi} \\
&=& \mathop{\liminf}_{\tau\to 0} \frac{1}{\tau} \int_{0}^{\tau}{\frac{d^2(v_\varsigma, u_\tau)}{2\varsigma^2} \ d\varsigma} \ = \ \mathop{\liminf}_{\tau \to 0} \frac{\psi(u_\tau) - \cY_\tau \psi(u_\tau)}{\tau}. 
\end{eqnarray*}
We applied Fatou's Lemma and the following lemma with collected statements from (\cite{AmbrosioGigliSavare05}, chapter 3):
\begin{lemma}(\cite{AmbrosioGigliSavare05})
Let $\psi: \SS \rightarrow (-\infty, +\infty]$ be given. For all $w\in \SS, \ \tau > 0$ and $w_\tau \in J_\tau \psi[w]$ it holds that
\begin{equation}
|\partial \psi|(w_\tau) \ \leq \ \frac{d(w_\tau, w)}{\tau}.
\end{equation}
We set 
\begin{equation*}
d_\tau^+ (w) := \sup_{w_\tau \in J_\tau[w]} d(w_\tau, w), \ d_\tau^- (w) := \inf_{w_\tau \in J_\tau[w]} d(w_\tau, w) \ (w\in \SS, \ \tau > 0). 
\end{equation*}
Now, let $\psi$ be $d$-lower semicontinuous and satisfy (\ref{eq: existence of minimizers}) for all $\tau\in (0, \tau_\star)$ and let $w\in D(\psi)$. Then there exists an (at most) countable set $\NN_w \subset (0, \tau_\star)$ such that
\begin{equation*}
d_\tau^- (w) \ = \ d_\tau^+ (w) \text{ for all } \tau \in (0, \tau_\star) \setminus \NN_w
\end{equation*}
and the map $\tau \mapsto \frac{d_\tau^\pm (w)}{\tau}$ has finite pointwise variation in $(\tau_0, \tau_1)$ for every $0 < \tau_0 < \tau_1 < \tau_\star$.
Moreover, it holds for all $\tau \in (0, \tau_\star)$ and $w_\tau \in J_\tau \psi[w]$ that
\begin{equation}
\frac{d^2(w_\tau, w)}{2\tau} \ + \ \int_{0}^{\tau}{\frac{(d_r^\pm(w))^2}{2r^2} \ dr} \ = \ \psi(w) - \psi(w_\tau). 
\end{equation}

\end{lemma}
The proof of Proposition \ref{prop: important proposition} is complete. 
\end{proof}

\subsection{Relaxation results}\label{subsec: relaxation results}

\paragraph{Relaxed form of minimization}

We come back to the classical minimizing movement scheme for gradient flows (section \ref{subsec: MM}) along a single functional used to construct curves of maximal slope \cite{AmbrosioGigliSavare05}. 
Let us introduce a relaxed form of minimization in each step of the scheme. Still we obtain curves of maximal slope.  
Let $\psi: \SS \rightarrow (-\infty, +\infty]$ be given. \\

For every time step $\tau > 0$, find a sequence $(u_\tau^n)_{n\in\N}$ by the following scheme. 

The sequence of initial values $u_\tau^0 \stackrel{\sigma}{\weakto} u^0 \in D(\psi) \ (\tau\to 0)$ satisfies
\begin{equation}\label{eq: recovery sequence 2}
\sup_\tau d(u_\tau^0, u^0) < +\infty, \ \psi(u_\tau^0) \to \psi(u^0)
\end{equation}
and $u_\tau^n\in\SS \ (n\in \N)$ satisfies
\begin{equation}\label{eq: relaxed form of minimi 2}
\psi(u_\tau^n) + \frac{1}{2\tau} d^2(u_\tau^n, u_\tau^{n-1}) \ \leq \ \inf_{v\in \SS} \left\{\psi(v) + \frac{1}{2\tau} d^2(v,u_\tau^{n-1}) \right\} + \gamma_\tau \tau
\end{equation}
with some $\gamma_\tau > 0, \ \gamma_\tau \to 0$ as $\tau \to 0$. \\

Let $\overline{u}_\tau : [0, +\infty) \rightarrow \SS$ be the corresponding piecewise constant interpolation, i.e. 
\begin{eqnarray}
\overline{u}_\tau (t) &\equiv& u_\tau^n \text{    if } t\in ((n-1)\tau, n\tau], \ n\in \N, \label{eqn: mm 2} \\
\overline{u}_\tau (0) &=& u_\tau^0. \label{eqn: initial value 2}
\end{eqnarray}

Then the following holds. 

\begin{theorem}\label{thm: relaxed form of minimization}
Let $\psi$ satisfy assumption \ref{ass: AGS1} and $\overline{u}_\tau : [0, +\infty) \rightarrow \SS \ (\tau > 0)$ be constructed according to (\ref{eq: recovery sequence 2}) - (\ref{eqn: initial value 2}). Then there exist a locally absolutely continuous curve $u: [0, +\infty) \rightarrow \SS$ and a subsequence $(\overline{u}_{\tau_k})_{k\in \N}$ such that 
\begin{equation}\label{eq: convergence 2}
\overline{u}_{\tau_k} (t) \stackrel{\sigma}{\weakto} u(t) \text{ for all } t \geq 0
\end{equation} 
and $u$ satisfies the initial condition
\begin{equation}\label{eq: initial condition 2}
u(0) = u^0
\end{equation}
and the energy inequality (\ref{eq: AGS energy inequality}) for all $t \geq 0$. In particular, if $|\partial^-\psi|$ is a strong upper gradient for $\psi$, then
\begin{equation}
u \text{ is a curve of maximal slope for } \psi \text{ with respect to } |\partial^- \psi|. 
\end{equation}
\end{theorem}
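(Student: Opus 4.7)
The plan is to reduce Theorem \ref{thm: relaxed form of minimization} to Theorem \ref{thm: main theorem} by taking the constant sequence $\phi_\epsilon := \psi$ for every $\epsilon > 0$ together with any choice $\epsilon(\tau) \to 0$, so that the relaxed scheme (\ref{eq: recovery sequence 2})--(\ref{eqn: initial value 2}) coincides with (\ref{eq: recovery sequence})--(\ref{eqn: initial value}). Once that reduction is in place, the task splits into checking that each of assumptions \ref{ass: co}, \ref{ass: liminf}, \ref{ass: crucial assumption} is a consequence of assumption \ref{ass: AGS1}.

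First I would dispatch the two easy assumptions. Assumption \ref{ass: co} is immediate: the coercivity (\ref{eq: coercive}) is nothing but (\ref{eq: AGS1 coercive}), and the compactness property (\ref{eq: compact}) collapses to (\ref{eq: AGS1 compact}) because the functionals do not depend on $\epsilon$. Assumption \ref{ass: liminf} similarly reduces to the sequential $\sigma$-lower semicontinuity (\ref{eq: AGS1 lower semicontinuity}) of $\psi$. The recovery sequence hypothesis (\ref{eq: recovery sequence}) is exactly (\ref{eq: recovery sequence 2}), so no further input is needed there.

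The substantial step is verifying assumption \ref{ass: crucial assumption}, and this is precisely the content of Proposition \ref{prop: important proposition}. To invoke it I must verify its two hypotheses under the AGS setup. The $d$-lower semicontinuity of $\psi$ follows from (\ref{eq: AGS1 lower semicontinuity}) together with the observation (drawn from the compatibility axioms of section \ref{subsec:top ass}, taking $v_n \equiv v$) that $d$-convergence implies $\sigma$-convergence. For existence of Moreau--Yosida minimizers $J_\tau \psi[u] \neq \emptyset$ at small $\tau > 0$, I would run the direct method: by (\ref{eq: AGS1 coercive}) a minimizing sequence $(v_n)$ for $v \mapsto \psi(v) + \frac{1}{2\tau}d^2(v,u)$ has uniformly bounded distance from $u$ and uniformly bounded $\psi$-values, so (\ref{eq: AGS1 compact}) extracts a $\sigma$-convergent subsequence, and (\ref{eq: AGS1 lower semicontinuity}) together with the $\sigma$-lower semicontinuity of $d(\cdot, u)$ yields a minimizer. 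With both hypotheses in hand, Proposition \ref{prop: important proposition} delivers exactly the liminf inequality (\ref{eq: crucial assumption}) required.

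Having verified all three assumptions, Theorem \ref{thm: main theorem} applies verbatim and produces a subsequence $(\overline{u}_{\tau_k})_{k\in\N}$ converging pointwise in $\sigma$ to a locally absolutely continuous curve $u$ satisfying (\ref{eq: initial condition 2}) and the energy inequality (\ref{eq: AGS energy inequality}) for all $t \geq 0$; when $|\partial^-\psi|$ is a strong upper gradient, the third step of that proof upgrades the energy inequality to equality, so $u$ is a curve of maximal slope for $\psi$ with respect to $|\partial^-\psi|$. The only point where one has to be careful is the verification of (\ref{eq: existence of minimizers}) under the abstract topological framework of section \ref{subsec:top ass}, since one is using $\sigma$-compactness to produce a minimizer of a functional containing the possibly only $\sigma$-lower semicontinuous distance term; once this is handled the rest of the argument is a purely bookkeeping reduction to Theorem \ref{thm: main theorem}.
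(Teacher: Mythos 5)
Your proposal is correct and follows essentially the same route as the paper: reduce to Theorem \ref{thm: main theorem} with the constant family $\phi_\epsilon=\psi$, identify assumption \ref{ass: AGS1} with assumptions \ref{ass: co} and \ref{ass: liminf}, and obtain assumption \ref{ass: crucial assumption} from Proposition \ref{prop: important proposition}. The only difference is that where the paper simply cites \cite{AmbrosioGigliSavare05} for (\ref{eq: existence of minimizers}) and the $d$-lower semicontinuity hypotheses, you verify them directly (via the compatibility axioms and the direct method), which is a sound elaboration rather than a different argument.
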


\begin{proof}
We apply Theorem \ref{thm: main theorem} with $\phi_\epsilon = \phi = \psi$ for all $\epsilon > 0$. Assumption \ref{ass: AGS1} corresponds to the assumptions \ref{ass: co} and \ref{ass: liminf}. It can be proved (\cite{AmbrosioGigliSavare05}, chapter 2) that assumption \ref{ass: AGS1} implies that (\ref{eq: existence of minimizers}) holds for $\tau > 0$ small enough. Thus, Proposition \ref{prop: important proposition} shows that assumption \ref{ass: crucial assumption} is also satisfied in this case.  
\end{proof}

There is some possibility that the relaxed minimizing movement scheme produces more curves of maximal slope than the classical scheme. We give a simple example. 
\begin{example}
We apply our scheme (\ref{eq: recovery sequence 2}) - (\ref{eq: relaxed form of minimi 2}) to $\psi(x) = -\frac{1}{\alpha}|x|^\alpha$ with $1 < \alpha < 2$, $\psi: \R \rightarrow \R$, and initial values $u_\tau^0 = u^0 = 0$ for all $\tau > 0$. Doing so we also obtain the trivial solution $u \equiv 0$ to the corresponding gradient flow equation since the choice $u_\tau^n = 0 \ (\tau > 0, n\in\N)$ is admissible in (\ref{eq: relaxed form of minimi 2}). This solution cannot be reached by the classical scheme.   
\end{example}

The error order $o(\tau)$ in (\ref{eq: relaxed form of minimi}) and (\ref{eq: relaxed form of minimi 2}) is optimal. The following example shows that we cannot expect to obtain curves of maximal slope if we allow an error greater than of order $o(\tau)$. 

\begin{example}
We consider $\psi: \R \rightarrow \R$, $\psi(x) = \frac{1}{2}x^2$ with initial values $u_\tau^0 = u^0 = 0$ for all $\tau > 0$. Let us choose $u_\tau^1 = \tau$ and $u_\tau^n = \frac{n\tau}{(1+\tau)^{n-1}}$ for $n\geq2$. Then we have
\begin{equation*}
\psi(u_\tau^n) + \frac{1}{2\tau} d^2(u_\tau^n, u_\tau^{n-1}) \ \leq \ \cY_\tau \psi (u_\tau^{n-1}) + \tau  
\end{equation*}
for all $0 < \tau < 1$ and $n\in \N$, and the limit curve $u: [0, +\infty) \rightarrow \R$, $u(t) = te^{-t}$ does not solve the corresponding gradient flow equation.
\end{example}

\paragraph{Lower semicontinuous envelope relaxation}

Now, we consider a functional $\psi: \SS \rightarrow (-\infty, +\infty]$ satisfying the coercivity and compactness conditions (\ref{eq: AGS1 coercive}) and (\ref{eq: AGS1 compact}), but for which the minimization problems in the classical minimizing movement scheme for gradient flows (section \ref{subsec: MM}) need not have any solutions due to missing lower semicontinuity (\ref{eq: AGS1 lower semicontinuity}). 

However, the approximate minimization problems (\ref{eq: relaxed form of minimi 2}) are solvable and we can follow the steepest descent movements in our relaxed scheme along the functional $\psi$. They are closely related to the gradient flow motion of the lower semicontinuous envelope $\psi_{sc}: \SS \rightarrow (-\infty, +\infty]$ of $\psi$.\\ 

To be more precise, let $\psi_{sc}: \SS \rightarrow (-\infty, +\infty]$ be both the $d$-lower semicontinuous and a weakened form of the $\sigma$-lower semicontinuous envelope of the functional $\psi$, i.e.
\begin{equation}\label{eq: liminf 3}
\sup_{n,m} d(u_n, u_m) < +\infty, \ u_n \stackrel{\sigma}{\weakto} u \ \Rightarrow \ \mathop{\liminf}_{n\to \infty} \psi(u_n) \geq \psi_{sc}(u)
\end{equation} 
and for all $u\in D(\psi)$ there exists $(v_n)_{n\in\N} \subset \SS$ such that 
\begin{equation}\label{eq: d-lower semicontinuous envelope}
v_n \stackrel{d}{\to} u, \ \psi(v_n) \to \psi_{sc}(u). 
\end{equation} 

Moreover, let initial values $u_\tau^0 \stackrel{\sigma}{\weakto} u^0 \in D(\psi) \ (\tau\to 0)$ be given satisfying
\begin{equation}\label{eq: recovery sequence 3}
\sup_\tau d(u_\tau^0, u^0) < +\infty, \ \psi(u_\tau^0) \to \psi_{sc}(u^0).
\end{equation}

\begin{theorem}\label{thm: relaxation}
We assume that $\psi$ satisfies (\ref{eq: AGS1 coercive}) and (\ref{eq: AGS1 compact}) and that $\psi_{sc}$ is given by (\ref{eq: liminf 3}) and (\ref{eq: d-lower semicontinuous envelope}). We construct $\overline{u}_\tau: [0, +\infty) \rightarrow \SS$ according to (\ref{eq: recovery sequence 3}), (\ref{eq: relaxed form of minimi 2}), (\ref{eqn: mm 2}) and (\ref{eqn: initial value 2}). 

Then there exist a locally absolutely continuous curve $u: [0, +\infty) \rightarrow \SS$ and a subsequence $(\overline{u}_{\tau_k})_{k\in \N}$ such that 
\begin{equation}\label{eq: convergence 3}
\overline{u}_{\tau_k} (t) \stackrel{\sigma}{\weakto} u(t) \text{ for all } t \geq 0
\end{equation} 
and $u$ satisfies the initial condition
\begin{equation}\label{eq: initial condition 3}
u(0) = u^0
\end{equation}
and the energy inequality
\begin{equation}\label{eq: energy inequality 3}
\psi_{sc}(u(0)) - \psi_{sc}(u(t)) \ \geq \ \frac{1}{2} \int^{t}_{0}{|\partial^- \psi_{sc}|^2 (u(s)) \ ds} \ + \ \frac{1}{2} \int^{t}_{0}{|u'|^2 (s) \ ds}
\end{equation}
for all $t\geq 0$.

In particular, if the relaxed slope of $\psi_{sc}$ is a strong upper gradient, then 
\begin{equation}
u \text{ is a curve of maximal slope for } \psi_{sc} \text{ with respect to } |\partial^- \psi_{sc}|. 
\end{equation}
\end{theorem}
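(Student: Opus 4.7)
The natural plan is to apply Theorem \ref{thm: main theorem} with the constant sequence $\phi_\epsilon := \psi$ (for every $\epsilon > 0$) and limit functional $\phi := \psi_{sc}$. With this identification, the approximate minimization problems (\ref{eq: relaxed form of minimi}) in the scheme of Theorem \ref{thm: main theorem} coincide exactly with (\ref{eq: relaxed form of minimi 2}), the recovery sequence condition (\ref{eq: recovery sequence}) reduces to (\ref{eq: recovery sequence 3}), and the conclusions (\ref{eq: convergence 3}), (\ref{eq: initial condition 3}), (\ref{eq: energy inequality 3}) are exactly what Theorem \ref{thm: main theorem} delivers. The curve of maximal slope assertion then follows from the final part of that theorem under the strong upper gradient hypothesis on $|\partial^-\psi_{sc}|$.

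The routine verifications are quick. Assumption \ref{ass: co} is just (\ref{eq: AGS1 coercive}) and (\ref{eq: AGS1 compact}), applied with $\phi_\epsilon = \psi$ independent of $\epsilon$. Assumption \ref{ass: liminf} is precisely (\ref{eq: liminf 3}). In particular, testing (\ref{eq: liminf 3}) against the constant sequence $u_n \equiv u$ gives the pointwise inequality $\psi_{sc} \leq \psi$, which together with $J_\tau\psi_{sc}[u]\subset\SS$ being defined through an infimum over the same set yields the pointwise bound
\begin{equation*}
\cY_\tau\psi_{sc}(u) \ \leq \ \cY_\tau\psi(u) \qquad \text{for all } u\in\SS,\ \tau > 0.
\end{equation*}

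The one non-trivial point is Assumption \ref{ass: crucial assumption}. Combining the two inequalities above gives, for any $u_\tau\stackrel{\sigma}{\weakto}u$ with $\sup_\tau\{\psi(u_\tau),d(u_\tau,u)\}<+\infty$,
\begin{equation*}
\frac{\psi(u_\tau)-\cY_\tau\psi(u_\tau)}{\tau} \ \geq \ \frac{\psi_{sc}(u_\tau)-\cY_\tau\psi_{sc}(u_\tau)}{\tau}.
\end{equation*}
The plan is then to apply Proposition \ref{prop: important proposition} with $\psi$ there replaced by $\psi_{sc}$, which will bound the right-hand side from below by $\tfrac{1}{2}|\partial^-\psi_{sc}|^2(u)$ in the liminf. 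This requires checking (i) $d$-lower semicontinuity of $\psi_{sc}$ (given) and (ii) $J_\tau\psi_{sc}[u]\neq\emptyset$ for all $u\in\SS$ and small $\tau > 0$. For (ii) I would verify that $\psi_{sc}$ inherits from $\psi$ the coercivity bound $\psi_{sc}(\cdot)\geq -A - Bd^2(\cdot,u_\star)$ (apply (\ref{eq: AGS1 coercive}) along the recovery sequence (\ref{eq: d-lower semicontinuous envelope}) and pass to the limit) and a compactness property on sublevels: given a minimizing sequence $v_n$ for $\cY_\tau\psi_{sc}(u)$, use (\ref{eq: d-lower semicontinuous envelope}) to pick $w_n$ with $d(w_n,v_n)<1/n$ and $\psi(w_n)\leq\psi_{sc}(v_n)+1/n$, extract a $\sigma$-convergent subsequence of $w_n$ via (\ref{eq: AGS1 compact}), and transfer the convergence to $v_n$ via the topological compatibility of $\sigma$ with $d$; the same trick combined with (\ref{eq: liminf 3}) gives the lower semicontinuity needed to close the argument at the limit point.

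The main obstacle is exactly this step of transferring existence of Moreau-Yosida minimizers from $\psi$ to $\psi_{sc}$: neither coercivity, compactness, nor $\sigma$-lower semicontinuity is directly assumed for $\psi_{sc}$, and all three have to be extracted by the approximation--compatibility argument sketched above using (\ref{eq: d-lower semicontinuous envelope}). Once this is done, Proposition \ref{prop: important proposition} closes Assumption \ref{ass: crucial assumption} and Theorem \ref{thm: main theorem} delivers the conclusions (\ref{eq: convergence 3})--(\ref{eq: energy inequality 3}); the curve of maximal slope statement follows verbatim from the final step of the proof of Theorem \ref{thm: main theorem} applied to $\psi_{sc}$.
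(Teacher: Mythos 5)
Your overall strategy is the same as the paper's: apply Theorem \ref{thm: main theorem} with $\phi_\epsilon=\psi$ and $\phi=\psi_{sc}$, note that assumptions \ref{ass: co} and \ref{ass: liminf} are immediate from (\ref{eq: AGS1 coercive}), (\ref{eq: AGS1 compact}), (\ref{eq: liminf 3}), and reduce assumption \ref{ass: crucial assumption} to Proposition \ref{prop: important proposition} applied to $\psi_{sc}$ together with a comparison of the two difference quotients. However, at that comparison step your argument has a genuine gap. You claim that $\psi_{sc}\leq\psi$ and $\cY_\tau\psi_{sc}\leq\cY_\tau\psi$ ``combine'' to give
\begin{equation*}
\frac{\psi(u_\tau)-\cY_\tau\psi(u_\tau)}{\tau} \ \geq \ \frac{\psi_{sc}(u_\tau)-\cY_\tau\psi_{sc}(u_\tau)}{\tau},
\end{equation*}
but they do not: the first inequality makes the numerator on the left larger through its first term, while the second inequality makes the subtracted term $\cY_\tau\psi(u_\tau)$ larger as well, so the two effects pull in opposite directions and no comparison follows. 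What you actually need is the \emph{reverse} Moreau--Yosida inequality $\cY_\tau\psi(u_\tau)\leq\cY_\tau\psi_{sc}(u_\tau)$ (up to an $o(\tau)\cdot\tau$ error), and this is exactly where the recovery condition (\ref{eq: d-lower semicontinuous envelope}) must enter the comparison, not only the existence of minimizers: take $v_\tau\in J_\tau\psi_{sc}[u_\tau]$ and a recovery sequence $v_\tau^k\stackrel{d}{\to}v_\tau$ with $\psi(v_\tau^k)\to\psi_{sc}(v_\tau)$, and insert $v_\tau^k$ into the infimum defining $\cY_\tau\psi(u_\tau)$; for $k$ large (with $\tau$ fixed) this gives $\cY_\tau\psi(u_\tau)\leq\cY_\tau\psi_{sc}(u_\tau)+\tau^2$, say, which together with $\psi\geq\psi_{sc}$ yields the desired liminf comparison. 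This is precisely how the paper argues (and it shows in passing that $\cY_\tau\psi=\cY_\tau\psi_{sc}$); in your write-up the inequality $\cY_\tau\psi_{sc}\leq\cY_\tau\psi$ that you derive is true but useless for the step in question.

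The rest is sound. Your direct verification that $J_\tau\psi_{sc}[u]\neq\emptyset$ (approximating a minimizing sequence by recovery points, extracting a $\sigma$-convergent subsequence via (\ref{eq: AGS1 compact}), and closing with (\ref{eq: liminf 3}) and the compatibility of $\sigma$ with $d$) is essentially equivalent to the paper's shortcut, which observes that $\psi_{sc}$ satisfies assumption \ref{ass: AGS1} and invokes the known implication (\ref{eq: existence of minimizers}); either route is acceptable. Since you already have (\ref{eq: d-lower semicontinuous envelope}) in hand, the gap above is easily repaired, but as written the central inequality is a non sequitur and the proof is incomplete without it.
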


\begin{proof}
We apply Theorem \ref{thm: main theorem} with $\phi_\epsilon = \psi$ for all $\epsilon > 0$ and $\phi = \psi_{sc}$. The assumptions \ref{ass: co} and \ref{ass: liminf} are clearly satisfied. It remains to be checked if assumption \ref{ass: crucial assumption} is fulfilled. 

We can conclude from the fact that the conditions (\ref{eq: AGS1 coercive}) and (\ref{eq: AGS1 compact}) hold for the functional $\psi$ and from (\ref{eq: liminf 3}), (\ref{eq: d-lower semicontinuous envelope}) that assumption \ref{ass: AGS1} is satisfied for the functional $\psi_{sc}$. It can be proved \cite{AmbrosioGigliSavare05} that if a functional satisfies assumption \ref{ass: AGS1} then it also satisfies (\ref{eq: existence of minimizers}). So for small $\tau > 0$ we have 
\begin{equation*}
J_\tau \psi_{sc} [w] \neq \emptyset \text{ for all } w\in \SS.
\end{equation*}
Hence, Proposition \ref{prop: important proposition} is applicable to $\psi_{sc}$ and it holds that 
\begin{equation}
\mathop{\liminf}_{\tau \to 0} \frac{\psi_{sc}(u_\tau) - \cY_\tau \psi_{sc}(u_\tau)}{\tau} \ \geq \ \frac{1}{2} |\partial^- \psi_{sc}|^2 (u)
\end{equation}
whenever $u, u_\tau\in \SS \ (\tau > 0)$ satisfy 
\begin{equation*}
u_\tau \stackrel{\sigma}{\weakto} u, \ \sup_\tau \left\{\psi_{sc}(u_\tau), d(u_\tau, u)\right\} < +\infty. 
\end{equation*}

Now, let $u, u_\tau\in \SS \ (\tau > 0)$ with
\begin{equation*}
u_\tau \stackrel{\sigma}{\weakto} u, \ \sup_\tau \left\{\psi(u_\tau), d(u_\tau, u)\right\} < +\infty 
\end{equation*}
be given. For $\tau>0$ fixed, we set $v_\tau \in J_\tau \psi_{sc}[u_\tau]$, we let $(v_\tau^k)_{k\in\N}$ be a recovery sequence for $v_\tau$ according to (\ref{eq: d-lower semicontinuous envelope}), i.e. $v_\tau^k \stackrel{d}{\to} v_\tau, \ \psi(v_\tau^k) \to \psi_{sc}(v_\tau) \ (k\to \infty)$, and we obtain
\begin{eqnarray*}
&& \frac{\psi(u_\tau) - \cY_\tau \psi(u_\tau)}{\tau} \ + \ \frac{\cY_\tau \psi_{sc}(u_\tau) - \psi_{sc}(u_\tau)}{\tau} \ \geq \ \frac{\cY_\tau \psi_{sc}(u_\tau) - \cY_\tau \psi(u_\tau)}{\tau} \\
&\geq& \frac{\psi_{sc}(v_\tau) - \psi_{sc}(v_\tau^k)}{\tau} \ + \ \frac{1}{2\tau} \left[d^2(v_\tau, u_\tau) - d^2(v_\tau^k, u_\tau) \right] \ \geq \ -\tau.
\end{eqnarray*} 
The last inequality holds if we choose $k\in \N$ large enough for $\tau > 0$ fixed. Please note that we have $\psi_{sc}(w) \leq \psi(w)$ for all $w\in\SS$. \\

All in all we obtain
\begin{eqnarray*}
\frac{1}{2}|\partial^- \psi_{sc}|^2(u) &\leq& \mathop{\liminf}_{\tau \to 0} \frac{\psi_{sc}(u_\tau) - \cY_\tau \psi_{sc}(u_\tau)}{\tau} \\
&\leq& \mathop{\liminf}_{\tau \to 0} \frac{\psi(u_\tau) - \cY_\tau \psi(u_\tau)}{\tau}.
\end{eqnarray*} 

The proof of Theorem \ref{thm: relaxation} is complete. 
\end{proof}

We indirectly revealed that for small $\tau > 0$ we have $\cY_\tau \psi_{sc} (w) = \cY_\tau \psi (w)$ for all $w\in\SS$. This equality can also be shown directly by means of the theory of $\Gamma$-convergence. In fact, we could have proved Theorem \ref{thm: relaxation} as a corollary of Theorem \ref{thm: relaxed form of minimization}. 

\paragraph{Restriction to a dense subset}

Now, let a functional $\psi: \SS \rightarrow (-\infty, +\infty]$ satisfying assumption \ref{ass: AGS1} with strong upper gradient $|\partial^- \psi|$ and a dense subset $\VV \subset \SS$ such that
\begin{equation*}
\forall v\in D(\psi) \ \exists (v_k)_{k\in\N}\subset \VV: \ v_k\stackrel{d}{\to} v, \ \psi(v_k)\to \psi(v)
\end{equation*}
be given.
Our relaxation results make it possible to restrict the approximate minimization problems to this dense subset $\VV\subset \SS$, i.e. to replace (\ref{eq: relaxed form of minimi 2}) by 
\begin{equation}
(\psi + \I_\VV) (u_\tau^n) + \frac{1}{2\tau} d^2(u_\tau^n, u_\tau^{n-1}) \ \leq \ \cY_\tau (\psi + \I_\VV) (u_\tau^{n-1}) + \gamma_\tau \tau
\end{equation}
in which we set $\I_\VV \equiv 0$ on $\VV$, $\I_\VV \equiv +\infty$ on $\SS \setminus \VV$. Thus, we construct approximations $\overline{u}_\tau: [0, +\infty) \rightarrow \VV$ (with particular properties) to curves of maximal slope for $\psi$ with respect to $|\partial^-\psi|$. In the following example, the desired property is higher regularity.  
\begin{example}
Let $\Omega \subset \Rn$ be bounded and open. We set
\begin{eqnarray*}
\SS = L^p(\Omega) \ (1\leq p < +\infty), \ \sigma = d, \ D(\psi) = W_0^{1,p}(\Omega), \\
\psi(u) = \int_{\Omega}^{}{L(x, \nabla u(x)) \ dx}  +  \int_{\Omega}^{}{j(x,u(x)) \ dx}, \ \text{ if } u\in W_0^{1,p}(\Omega) 
\end{eqnarray*}
with convex, lower semicontinuous $L(x, \cdot): \Rn \rightarrow \R$, $j(x, \cdot): \R \rightarrow \R$. We suppose that $L(x, \cdot)$ satisfies a growth condition of order $p$, i.e. that there exist $c_1, c_2, c_3, c_4 > 0$ such that
\begin{equation*}
c_1 |z|^p - c_2 \ \leq \ L(x, z) \ \leq \ c_3|z|^p + c_4 
\end{equation*} 
for all $x\in\Rn$, $z\in \R^n$. Moreover, we suppose that $j$ is bounded from below and that there exists $c > 0$ such that
\begin{equation*}
j(x, z) \ \leq \ c(1+|z|^p)
\end{equation*} 
for all $x\in\Rn$, $z\in\R$. 

We set 
\begin{equation*}
\VV = C_0^\infty(\Omega). 
\end{equation*}

One can easily check that all the required assumptions are satisfied in this example.  
\end{example}

\section{Convergence of gradient flows}\label{sec: convergence of gradient flows}

Let us suppose that we have functionals $\phi_\epsilon: \SS \rightarrow (-\infty, +\infty]$ $(\epsilon > 0)$, $\phi: \SS \rightarrow (-\infty, +\infty]$ connected through a $\Gamma(\sigma)$-liminf inequality as in assumption \ref{ass: liminf}, with strong upper gradients $|\partial^- \phi_\epsilon|$, $|\partial^-\phi|$, and satisfying a particular property which we call COGF-Property (Convergence Of Gradient Flows):

\paragraph{COGF-Property}
If $\vartheta_\epsilon: [0, +\infty) \rightarrow \SS$ is a curve of maximal slope for $\phi_\epsilon$ with respect to $|\partial^- \phi_\epsilon|$ $(\epsilon > 0)$ and we have
\begin{eqnarray}
\vartheta_\epsilon(t) \stackrel{\sigma}{\weakto} \vartheta(t) \text{ for all } t\geq 0, \label{eqn: convergence of gradient flows} \\
\phi_{\epsilon}(\vartheta_\epsilon(0)) \to \phi(\vartheta(0)), \label{eqn: well-preparedness of initial values}
\end{eqnarray}
for a curve $\vartheta: [0, +\infty) \rightarrow \SS$, then $\vartheta$ is a curve of maximal slope for $\phi$ with respect to $|\partial^- \phi|$. \\

In addition, we suppose that there exist curves of maximal slope for $\phi_\epsilon$ with respect to $|\partial^- \phi_\epsilon|$, for which (\ref{eqn: convergence of gradient flows}), (\ref{eqn: well-preparedness of initial values}) indeed hold. For this to be guaranteed, a natural setting would include both some equi-coercivity and combined compactness property such as in assumption \ref{ass: co} and some lower semicontinuity and compactness property of the single functionals $\phi_\epsilon$ such as (\ref{eq: AGS1 lower semicontinuity}) and (\ref{eq: AGS1 compact}).  

\paragraph{Our expectation}
Intuitively, in this case, any choice $\epsilon = \epsilon(\tau)$ in (\ref{eq: recovery sequence}) - (\ref{eq: relaxed form of minimi}) should be feasible in order to obtain the results of Theorem \ref{thm: main theorem}.\\

We specify this idea in the abstract situation that the COGF-Property is a consequence of the established considerations by Sandier and Serfaty \cite{SandierSerfaty04}, \cite{serfaty2011gamma}. Indeed, our expectation is fulfilled:  
 
\begin{proposition}\label{prop: Sandier Serfaty}
Let $(\phi_{\epsilon})_{\epsilon > 0}$ satisfy the equi-coercivity condition (\ref{eq: coercive}) and let the lower semicontinuity and compactness conditions (\ref{eq: AGS1 lower semicontinuity}) and (\ref{eq: AGS1 compact}) hold for the single functionals $\phi_\epsilon$. If the Serfaty-Sandier condition  
\begin{equation}\label{eq: Sandier Serfaty}
u_\epsilon \stackrel{\sigma}{\weakto} u \ \Rightarrow \ \mathop{\liminf}_{\epsilon\to 0} |\partial^- \phi_\epsilon|(u_\epsilon) \geq |\partial^- \phi|(u) 
\end{equation} 
holds, then assumption \ref{ass: crucial assumption} is satisfied for every choice $\epsilon = \epsilon(\tau)\to 0 \ (\tau\to0)$.
\end{proposition}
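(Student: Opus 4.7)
The plan is to adapt the argument from Proposition \ref{prop: important proposition} to the $\Gamma$-converging setting, with the Sandier-Serfaty inequality (\ref{eq: Sandier Serfaty}) supplying the key passage from the relaxed slopes of $\phi_{\epsilon(\tau)}$ to the relaxed slope of the limit functional $\phi$.

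Fix $u, u_\tau \in \SS$ with $u_\tau \stackrel{\sigma}{\weakto} u$ and $\sup_\tau\{\phi_{\epsilon(\tau)}(u_\tau), d(u_\tau, u)\} < +\infty$. Under the equi-coercivity (\ref{eq: coercive}) together with the single-functional lower semicontinuity (\ref{eq: AGS1 lower semicontinuity}) and compactness (\ref{eq: AGS1 compact}) for each $\phi_{\epsilon(\tau)}$, the AGS theory guarantees that for every $\pi \in (0,1)$ and every sufficiently small $\tau > 0$ the set $J_{\pi\tau}\phi_{\epsilon(\tau)}[u_\tau]$ is non-empty. Arguing exactly as in the first estimates of the proof of Proposition \ref{prop: important proposition}, the uniform bound on $\phi_{\epsilon(\tau)}(u_\tau)$ yields $d(v_{\pi\tau}, u_\tau) \to 0$ for any selector $v_{\pi\tau} \in J_{\pi\tau}\phi_{\epsilon(\tau)}[u_\tau]$, and hence $v_{\pi\tau} \stackrel{\sigma}{\weakto} u$ by the compatibility of $\sigma$ with $d$.

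Combining the AGS bound $|\partial \phi_{\epsilon(\tau)}|(v_{\pi\tau}) \leq d(v_{\pi\tau}, u_\tau)/(\pi\tau)$, the standard inequality $|\partial^-\phi_{\epsilon(\tau)}| \leq |\partial\phi_{\epsilon(\tau)}|$ (immediate from the definition of the relaxed slope by testing with the constant sequence), and the Sandier-Serfaty condition (\ref{eq: Sandier Serfaty}) applied along $\epsilon(\tau) \to 0$, we obtain for every $\pi \in (0,1)$
\begin{equation*}
|\partial^-\phi|(u) \ \leq \ \liminf_{\tau\to 0} |\partial^-\phi_{\epsilon(\tau)}|(v_{\pi\tau}) \ \leq \ \liminf_{\tau \to 0} \frac{d(v_{\pi\tau}, u_\tau)}{\pi\tau}.
\end{equation*}
Integrating this pointwise inequality over $\pi\in(0,1)$, applying Fatou's lemma, performing the change of variables $\varsigma = \pi\tau$, and invoking the Moreau-Yosida identity recalled in subsection \ref{subsec: An important proposition} (which equates $\frac{1}{\tau}\int_0^\tau \frac{(d_r^\pm(u_\tau))^2}{2r^2}\,dr$ with $(\phi_{\epsilon(\tau)}(u_\tau) - \cY_\tau\phi_{\epsilon(\tau)}(u_\tau))/\tau$) yields
\begin{equation*}
\frac{1}{2}|\partial^-\phi|^2(u) \ \leq \ \liminf_{\tau\to 0} \frac{\phi_{\epsilon(\tau)}(u_\tau) - \cY_\tau \phi_{\epsilon(\tau)}(u_\tau)}{\tau},
\end{equation*}
which is exactly assumption \ref{ass: crucial assumption}.

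The main technical subtlety is ensuring that the integration in $\pi$, Fatou's lemma, and the change of variables are compatible with a measurable choice of selectors $v_{\pi\tau}$. This is circumvented by replacing the selectors by the quantities $d_r^\pm(u_\tau)$, which are measurable in $r$ by construction, coincide outside an at most countable set, and satisfy the AGS identity exactly. Everything else is routine: coercivity delivers the $d$-convergence $v_{\pi\tau} \to u_\tau$, $\sigma$-compatibility of $d$ promotes this to $\sigma$-convergence $v_{\pi\tau} \stackrel{\sigma}{\weakto} u$, and the Sandier-Serfaty hypothesis then does the rest. Notice that the choice of $\epsilon = \epsilon(\tau)$ enters only through $\epsilon(\tau) \to 0$, which is precisely the content of ``every choice'' in the statement.
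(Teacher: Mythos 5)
Your proposal is correct and follows essentially the same route as the paper's proof: resolvent selectors $v_{\pi\tau}\in J_{\pi\tau}\phi_{\epsilon(\tau)}[u_\tau]$, the distance bound giving $v_{\pi\tau}\stackrel{\sigma}{\weakto}u$, the Sandier--Serfaty condition combined with the slope estimate $|\partial\phi_{\epsilon(\tau)}|(v_{\pi\tau})\le d(v_{\pi\tau},u_\tau)/(\pi\tau)$, and then integration in $\pi$, Fatou, change of variables and the AGS identity for the Moreau--Yosida approximation. You even make explicit two points the paper leaves implicit (the inequality $|\partial^-\phi_\epsilon|\le|\partial\phi_\epsilon|$ on the domain, and the measurability via $d_r^\pm$), so there is nothing further to add.
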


\begin{proof}
Let $\epsilon(\tau) > 0, \ \epsilon(\tau)\to 0$ and $u, u_\tau\in \SS \ (\tau > 0)$ be given such that 
\begin{equation*}
u_\tau \stackrel{\sigma}{\weakto} u, \ \sup_\tau \left\{\phi_{\epsilon(\tau)}(u_\tau), d(u_\tau, u)\right\} < +\infty.  
\end{equation*}
For $\pi\in (0,1)$ we set $v_{\pi\cdot\tau, \tau} \in J_{\pi\cdot\tau}\phi_{\epsilon(\tau)}[u_\tau]$. Then it holds that
\begin{eqnarray*}
&& \frac{1}{2} |\partial^- \phi|^2 (u) \ = \ \frac{1}{2}\int_{0}^{1}{|\partial^- \phi|^2(u) \ \d\pi} \ \leq \ \frac{1}{2}\int_{0}^{1}{\mathop{\liminf}_{\tau\to0} |\partial \phi_{\epsilon(\tau)}|^2 (v_{\pi \cdot \tau, \tau}) \ d\pi} \\
&\leq& \frac{1}{2}\int_{0}^{1}{\mathop{\liminf}_{\tau\to 0} \frac{d^2(v_{\pi\cdot\tau,\tau}, u_\tau)}{(\pi\cdot\tau)^2} \ d\pi} \ \leq \ \mathop{\liminf}_{\tau\to0} \frac{1}{2}\int_{0}^{1}{\frac{d^2(v_{\pi\cdot\tau, \tau}, u_\tau)}{(\pi\cdot\tau)^2} \ d\pi} \\
&=& \mathop{\liminf}_{\tau\to0} \frac{1}{\tau}\int_{0}^{\tau}{\frac{d^2(v_{\varsigma, \tau}, u_\tau)}{2\varsigma^2} \ d\varsigma} \ = \ \mathop{\liminf}_{\tau\to0} \frac{\phi_{\epsilon(\tau)}(u_\tau) - \cY_\tau \phi_{\epsilon(\tau)}(u_\tau)}{\tau}.
\end{eqnarray*}
We applied (\ref{eq: Sandier Serfaty}) and we followed similar arguments as in the proof of proposition \ref{prop: important proposition}. 
\end{proof}

\paragraph{The $\lambda$-convex case} If $\phi, \phi_\epsilon \ (\epsilon > 0)$ are $\lambda$-convex ($\lambda \in \R$), the $\sigma$-topology coincides with the topology induced by the distance $d$ and $\phi_\epsilon \stackrel{\Gamma}{\to}\phi$, then condition (\ref{eq: Ortner}) on the local slopes can be proved \cite{Ortner05TR}
\begin{equation}\label{eq: Ortner}
u_\epsilon \to u \ \Rightarrow \ \mathop{\liminf}_{\epsilon \to 0} |\partial \phi_{\epsilon}|(u_\epsilon) \geq |\partial \phi|(u).   
\end{equation}   

The $\Gamma$-liminf condition (\ref{eq: Ortner 2}) on the local slopes discussed in \cite{Ortner05TR} can be viewed as discrete counterpart of the Serfaty-Sandier condition (\ref{eq: Sandier Serfaty}). \\

Similarly as Proposition \ref{prop: Sandier Serfaty}, we can prove
\begin{proposition}\label{prop: Ortner}
Let $(\phi_{\epsilon})_{\epsilon > 0}$ satisfy the equi-coercivity condition (\ref{eq: coercive}) and let the lower semicontinuity and compactness conditions (\ref{eq: AGS1 lower semicontinuity}) and (\ref{eq: AGS1 compact}) hold for the single functionals $\phi_\epsilon$. If the following condition on the local slopes holds
\begin{equation}\label{eq: Ortner 2}
u_\epsilon \stackrel{\sigma}\weakto u \ \Rightarrow \ \mathop{\liminf}_{\epsilon \to 0} |\partial \phi_{\epsilon}|(u_\epsilon) \geq |\partial \phi|(u),  
\end{equation}
then assumption \ref{ass: crucial assumption} is satisfied for every choice $\epsilon = \epsilon(\tau)\to 0 \ (\tau\to0)$. 
\end{proposition}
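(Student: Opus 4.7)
The plan is to transport the proof of Proposition \ref{prop: Sandier Serfaty} almost verbatim, replacing the Serfaty--Sandier condition on relaxed slopes by the local slope condition (\ref{eq: Ortner 2}) combined with the trivial domination $|\partial^-\phi|(u) \leq |\partial\phi|(u)$, which is obtained by feeding the constant sequence $u_n \equiv u$ into the definition of $|\partial^-\phi|$. In this way the local slope hypothesis furnishes exactly the quantity needed to estimate the relaxed slope from below.

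Concretely, I would fix $u_\tau \stackrel{\sigma}{\weakto} u$ with $\sup_\tau\{\phi_{\epsilon(\tau)}(u_\tau), d(u_\tau,u)\} < +\infty$ and, for each $\pi \in (0,1)$, select an exact minimizer $v_{\pi\cdot\tau,\tau} \in J_{\pi\cdot\tau}\phi_{\epsilon(\tau)}[u_\tau]$. Unlike in Proposition \ref{prop: Sandier Serfaty}, such exact minimizers are genuinely needed here in order to apply (\ref{eq: Ortner 2}) pointwise; their existence is supplied by the lower semicontinuity (\ref{eq: AGS1 lower semicontinuity}) and compactness (\ref{eq: AGS1 compact}) assumed on the single functionals $\phi_{\epsilon(\tau)}$, together with the equi-coercivity (\ref{eq: coercive}). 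The standard Moreau--Yosida estimate performed in the proof of Proposition \ref{prop: important proposition} then yields $d(v_{\pi\cdot\tau,\tau}, u_\tau) \to 0$ as $\tau \to 0$, hence by compatibility of $\sigma$ with $d$ one has $v_{\pi\cdot\tau,\tau} \stackrel{\sigma}{\weakto} u$, and $\phi_{\epsilon(\tau)}(v_{\pi\cdot\tau,\tau})$ stays uniformly bounded from above. Hypothesis (\ref{eq: Ortner 2}) then gives
\[
\liminf_{\tau\to 0}|\partial\phi_{\epsilon(\tau)}|(v_{\pi\cdot\tau,\tau}) \geq |\partial\phi|(u) \geq |\partial^-\phi|(u),
\]
after which the remaining chain of inequalities is identical to the one in the proof of Proposition \ref{prop: Sandier Serfaty}: the a priori bound $|\partial\phi_{\epsilon(\tau)}|(v_{\pi\cdot\tau,\tau}) \leq d(v_{\pi\cdot\tau,\tau}, u_\tau)/(\pi\cdot\tau)$ together with the Moreau--Yosida energy identity from the lemma cited there, combined with Fatou's lemma and the substitution $\varsigma = \pi\cdot\tau$, delivers the desired lower bound on $\liminf_{\tau\to 0}(\phi_{\epsilon(\tau)}(u_\tau) - \cY_\tau\phi_{\epsilon(\tau)}(u_\tau))/\tau$ by $\tfrac{1}{2}|\partial^-\phi|^2(u)$.

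The only conceptual novelty compared with Proposition \ref{prop: Sandier Serfaty} is the insertion $|\partial^-\phi|(u) \leq |\partial\phi|(u)$, so I do not anticipate a serious obstacle. The one bookkeeping point worth checking is that the selection $\pi \mapsto v_{\pi\cdot\tau,\tau}$ be compatible with the integration in $\pi$; this is handled by the structural lemma from Proposition \ref{prop: important proposition}, which asserts that the extremal distances $d^+_\tau$ and $d^-_\tau$ coincide off an at most countable set of times, so that the integrand $d^2(v_{\varsigma,\tau}, u_\tau)/(2\varsigma^2)$ is unambiguously defined for almost every $\varsigma$ and the Moreau--Yosida identity applies as stated.
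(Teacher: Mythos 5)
Your proposal is correct and is essentially the paper's own argument: the paper establishes Proposition \ref{prop: Ortner} precisely by repeating the proof of Proposition \ref{prop: Sandier Serfaty}, i.e.\ applying (\ref{eq: Ortner 2}) to the exact minimizers $v_{\pi\cdot\tau,\tau}\in J_{\pi\cdot\tau}\phi_{\epsilon(\tau)}[u_\tau]$, inserting $|\partial^-\phi|(u)\leq|\partial\phi|(u)$ (constant test sequence), and then running the same chain of slope bound $|\partial\phi_{\epsilon(\tau)}|(v_{\pi\cdot\tau,\tau})\leq d(v_{\pi\cdot\tau,\tau},u_\tau)/(\pi\cdot\tau)$, Fatou's lemma, substitution $\varsigma=\pi\cdot\tau$ and the Moreau--Yosida identity. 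Your only inaccurate aside is that exact minimizers are needed ``unlike'' in Proposition \ref{prop: Sandier Serfaty}: the paper's proof of that proposition already selects exact minimizers and uses them in exactly the same way.
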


Again, we obtain that in our scheme (\ref{eq: recovery sequence})-(\ref{eq: relaxed form of minimi}) any choice $\epsilon = \epsilon(\tau)$ is feasible in order to apply Theorem \ref{thm: main theorem}. \\

In order to prove Proposition \ref{prop: Sandier Serfaty} and \ref{prop: Ortner}, it would be sufficient to impose the conditions (\ref{eq: Sandier Serfaty}) and (\ref{eq: Ortner 2}) respectively on sequences $u_\epsilon \stackrel{\sigma}\weakto u$ with $\sup_\epsilon \left\{\phi_{\epsilon}(u_\epsilon), d(u_\epsilon, u)\right\} < +\infty$, and weaker conditions than (\ref{eq: AGS1 lower semicontinuity}) and (\ref{eq: AGS1 compact}). 

\paragraph{Note}
In the special case considered in section \ref{sec: convergence of gradient flows}, every choice $\epsilon = \epsilon(\tau)$ is possible. In general, the interrelation between the steepest descent movement along a $\Gamma$-converging sequence of functionals and the gradient flow motion of its limit functional is more involved. 

Various choices of $(\epsilon(\tau))_{\tau > 0}$ lead to different motions not coincident with the gradient flow motion of the limit functional.  
For illustrative purposes, the reader may have a look at (heuristical) comptutations of $J_\tau f_\epsilon[\cdot]$ $(\tau > 0, \epsilon > 0)$ for concrete examples of $\Gamma$-converging functionals $f_\epsilon$ in \cite{braides2012local}. 
 
\section{Existence of a suitable choice $\epsilon = \epsilon(\tau)$}\label{sec: in separable metric spaces}

There always exists a sequence $(\epsilon_\tau)_{\tau > 0}$ ($\epsilon_\tau > 0$) such that the following holds: If we associate $\tau > 0$ with $\epsilon = \epsilon(\tau) \leq \epsilon_\tau$ in the relaxed minimizing movement scheme (\ref{eq: recovery sequence}) - (\ref{eq: relaxed form of minimi}), our Theorem \ref{thm: main theorem} with all its results is applicable. 

\begin{theorem}\label{thm: existence of epsilon(tau)}
We assume that $(\SS, d)$ is a separable, complete metric space and the $\sigma$-topology coincides with the topology induced by the distance $d$. 

Let functionals $\phi, \phi_\epsilon: \SS \rightarrow (-\infty, +\infty] \ (\epsilon > 0)$ be given. We suppose that $(\phi_\epsilon)_{\epsilon > 0}$ satisfies assumption \ref{ass: co} and $\phi_\epsilon \stackrel{\Gamma}{\to} \phi$, i.e.
\begin{eqnarray*}
\mathop{\liminf}_{\epsilon\to0} \phi_{\epsilon}(u_\epsilon) \geq \phi(u) \text{ for all } u, u_\epsilon\in\SS, \ u_\epsilon\stackrel{d}{\to}u, \\
\forall u\in \SS \ \exists \tilde{u}_\epsilon\in\SS: \ \tilde{u}_\epsilon\stackrel{d}{\to}u \text{ and } \phi_{\epsilon}(\tilde{u}_\epsilon) \to \phi(u).  
\end{eqnarray*}

Then there exists a sequence $(\epsilon_\tau)_{\tau > 0}$ with $\epsilon_\tau > 0$ such that our main assumption \ref{ass: crucial assumption} is satisfied for all choices $(\epsilon(\tau))_{\tau > 0}$ with $\epsilon(\tau) \leq \epsilon_\tau$ $(\epsilon(\tau)\to0)$. In particular, if we choose $\epsilon = \epsilon(\tau) \leq \epsilon_\tau$ for $\tau \to 0$ in (\ref{eq: recovery sequence}) - (\ref{eq: relaxed form of minimi}), all the results of Theorem \ref{thm: main theorem} hold.   
\end{theorem}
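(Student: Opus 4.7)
The plan is to reduce Assumption \ref{ass: crucial assumption} to its single-functional counterpart, Proposition \ref{prop: important proposition} applied to the $\Gamma$-limit $\phi$ itself, and then use the separability of $(\SS,d)$ to construct $\epsilon_\tau$ by a diagonal argument on a countable dense set. As a preparatory step I would verify that $\phi$ satisfies Assumption \ref{ass: AGS1}: the coercivity (\ref{eq: AGS1 coercive}) is inherited from (\ref{eq: coercive}) by taking $\liminf$ along recovery sequences; the $d$-lower semicontinuity (\ref{eq: AGS1 lower semicontinuity}) is the $\Gamma(d)$-liminf-inequality itself (using that $\sigma$ coincides with the $d$-topology); and the compactness (\ref{eq: AGS1 compact}) for $\phi$-sublevel bounded sequences follows from (\ref{eq: compact}) by a diagonal extraction using recovery sequences. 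Hence $J_\tau\phi[u]\neq\emptyset$ for small $\tau$ and Proposition \ref{prop: important proposition} is applicable to $\phi$ directly.

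Next I would establish, for each \emph{fixed} $\tau>0$, the pointwise Moreau-Yosida convergence $\cY_\tau\phi_\epsilon(u_\epsilon)\to \cY_\tau\phi(u)$ whenever $u_\epsilon\stackrel{d}{\to}u$ and $\epsilon\to 0$: the $\limsup$ bound comes from substituting a recovery sequence for a $\phi$-minimizer into the infimum, and the $\liminf$ bound comes from extracting a $d$-convergent subsequence of approximate $\cY_\tau\phi_\epsilon$-minimizers via (\ref{eq: coercive})--(\ref{eq: compact}) and applying the $\Gamma$-liminf of Assumption \ref{ass: liminf}. Fix now a countable $d$-dense set $\{u_i\}_{i\in\N}\subset\SS$. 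For each $\tau>0$ and each $i\in\N$ take $w_i^\tau\in J_\tau\phi[u_i]$ and a recovery sequence $\hat w_i^{\tau,\epsilon}\stackrel{d}{\to} w_i^\tau$ with $\phi_\epsilon(\hat w_i^{\tau,\epsilon})\to\phi(w_i^\tau)$. The previous convergence lets me pick $\epsilon^\star(\tau,i)>0$ so that for every $\epsilon\in(0,\epsilon^\star(\tau,i)]$,
\begin{equation*}
|\cY_\tau\phi_\epsilon(u_i)-\cY_\tau\phi(u_i)|\le\tau^2,\qquad d(\hat w_i^{\tau,\epsilon},w_i^\tau)+|\phi_\epsilon(\hat w_i^{\tau,\epsilon})-\phi(w_i^\tau)|\le\tau^2.
\end{equation*}
I then define $\epsilon_\tau:=\min\{\epsilon^\star(\tau,i):1\le i\le N(\tau)\}$, where $N(\tau)\uparrow+\infty$ as $\tau\to 0$; finiteness of the minimum guarantees $\epsilon_\tau>0$.

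Finally I would verify (\ref{eq: crucial assumption}) under this choice. Given $u_\tau\stackrel{d}{\to}u$ with $\sup_\tau \phi_{\epsilon(\tau)}(u_\tau)<+\infty$ and $\epsilon(\tau)\le\epsilon_\tau$, select $i(\tau)\le N(\tau)$ with $d(u_{i(\tau)},u_\tau)=o(\tau^{3/2})$ (possible by density once $N(\tau)$ grows fast enough). Using the estimate $d(v,u_\tau)=O(\sqrt\tau)$ for $v$ approximately minimizing $\cY_\tau\psi(u_\tau)$, which yields a $\tau^{-1/2}$-Lipschitz continuity for $u\mapsto\cY_\tau\psi(u)$ on bounded sets, combined with the Step~3 bound at $u_{i(\tau)}$, I obtain $\cY_\tau\phi_{\epsilon(\tau)}(u_\tau)=\cY_\tau\phi(u_\tau)+o(\tau)$. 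The competitor $\hat w_{i(\tau)}^{\tau,\epsilon(\tau)}$ is asymptotically close to $J_\tau\phi[u_{i(\tau)}]$, which together with the $\Gamma$-liminf $\liminf_\tau\phi_{\epsilon(\tau)}(u_\tau)\ge\phi(u)$ provides a sequence $\tilde u_\tau\to u$ with $\sup_\tau\phi(\tilde u_\tau)<+\infty$ along which the quantity $(\phi(\tilde u_\tau)-\cY_\tau\phi(\tilde u_\tau))/\tau$ is a lower bound for $(\phi_{\epsilon(\tau)}(u_\tau)-\cY_\tau\phi_{\epsilon(\tau)}(u_\tau))/\tau$ up to $o(1)$. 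The required inequality then follows from Proposition \ref{prop: important proposition} applied to $\phi$ along $(\tilde u_\tau)$.

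\textbf{The hard part} will be this last transfer in Step 4: converting the diagonal information (valid only at the dense points $u_i$ to precision $\tau^2$) into a genuinely uniform $o(\tau)$-estimate that survives division by $\tau$, while simultaneously producing a comparison sequence $\tilde u_\tau$ with controlled $\phi$-energy from the hypothesis that only the $\phi_{\epsilon(\tau)}$-energy of $u_\tau$ is bounded. Balancing the rates $d(u_{i(\tau)},u_\tau)\ll\tau^{3/2}$, the Moreau-Yosida Lipschitz constant $\tau^{-1/2}$, the diagonal error $\tau^2$, and the growth $N(\tau)\uparrow+\infty$ is the technical core of the proof.
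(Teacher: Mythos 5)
Your preparatory steps are fine and coincide with the paper's opening moves (checking Assumption \ref{ass: AGS1} for $\phi$, and the convergence $\cY_\tau\phi_\epsilon\to\cY_\tau\phi$ for fixed $\tau$ via the Fundamental Theorem of $\Gamma$-convergence), but Step 4 has a genuine gap — in fact two. First, the selection of $i(\tau)\le N(\tau)$ with $d(u_{i(\tau)},u_\tau)=o(\tau^{3/2})$ cannot be made: $N(\tau)$ and $\epsilon_\tau$ must be fixed before the sequence $(u_\tau)$ and its limit $u$ are known, and $\{u_1,\dots,u_{N(\tau)}\}$ is a finite set. In a general separable complete metric space bounded sets are not totally bounded, so no finite portion of the dense sequence is a $\rho$-net of any ball; and even in $\R$, since $u_\tau\to u$ may be arbitrarily slow, it is $u_\tau$ itself (not $u$) that must be approximated to precision $o(\tau^{3/2})$, which no a priori growth of $N(\tau)$ can guarantee uniformly over all admissible sequences. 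Second, even granting such a net, your transfer requires the \emph{values} $\phi_{\epsilon(\tau)}(u_\tau)$ to dominate $\phi$ along a comparison sequence up to an error $o(\tau)$, whereas the $\Gamma$-liminf inequality only yields $\phi_{\epsilon(\tau)}(u_\tau)\ge\phi(u)-o(1)$ with no rate whatsoever; dividing by $\tau$ destroys this. Your closing paragraph correctly flags this as the hard part, but it is not a matter of balancing rates: $\Gamma$-convergence carries no rates, so the quantitative input your scheme needs simply is not available.

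The paper's proof is structured precisely to avoid pointwise comparisons at scale $o(\tau)$. It regards the whole difference quotient $\frac{\phi_\epsilon-\cY_\tau\phi_\epsilon}{\tau}$, augmented by the penalty $\alpha_k(\phi_\epsilon+A+Bd^2(\cdot,u_\star))+d(\cdot,u_\star)$ so as to become nonnegative and equi-coercive, as a functional $F_{\epsilon,\tau,\alpha_k}$ of the base point; for each fixed $\tau$ it uses Dal Maso's metrizability of $\Gamma$-convergence on separable spaces (a metric $\delta$ built, like your construction, from a dense set and Yosida transforms, but with summable weights) to choose $\epsilon_\tau$ so that $\delta(F_{\eta,\tau,\alpha_k},F_{\tau,\alpha_k})\le\theta_\tau$ for $\eta\le\epsilon_\tau$ — only an arbitrary threshold $\theta_\tau\to0$ is needed, never a $\tau^2$-rate — with a uniform $\Gamma$-equivalence argument making $\epsilon_\tau$ independent of $\alpha_k$. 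Then compactness of $\Gamma$-convergence in $\tau$, the lower bound $\Sigma\ge\frac12|\partial^-\phi|^2+\alpha_k(\cdot)+d(\cdot,u_\star)$ for any $\Gamma$-limit $\Sigma$ obtained from Proposition \ref{prop: important proposition} applied to $\phi$, and the $\Gamma$-liminf inequality of $\Sigma$ transfer the estimate to \emph{arbitrary} sequences $u_\tau\stackrel{d}{\to}u$, qualitatively, with no nets and no rates; letting $\alpha_k\to0$ gives Assumption \ref{ass: crucial assumption}. To repair your argument you would essentially have to rebuild this mechanism: keep your dense-set diagonal choice of $\epsilon_\tau$, but replace the quantitative Lipschitz/net transfer of Step 4 by a $\Gamma$-limit (lower semicontinuity) argument for the difference-quotient functionals themselves.
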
 

\paragraph{Comment on Theorem \ref{thm: existence of epsilon(tau)}} The special feature of Theorem \ref{thm: existence of epsilon(tau)} is that the sequence $(\epsilon_\tau)_{\tau > 0}$ only depends on the velocity of $\Gamma$-convergence of the functionals $\phi_\epsilon$ to the functional $\phi$. We notice that the sequence $(\epsilon_\tau)_{\tau > 0}$ in Theorem \ref{thm: existence of epsilon(tau)} is completely independent of initial values $u_\tau^0, u^0\in\SS$ and of (approximate) minimizers $u_\tau^n$ in the (relaxed) minimizing movement scheme. We establish a direct connection between the gradient flow motion along a $\Gamma$-converging sequence of functionals and the gradient flow motion of its limit functional.

\begin{proof}
We prove Theorem \ref{thm: existence of epsilon(tau)}. Note that the functional $\phi$ satisfies assumption \ref{ass: AGS1} since $(\phi_\epsilon)_{\epsilon > 0}$ satisfies assumption \ref{ass: co} and $\phi_\epsilon \ \Gamma$-converges to $\phi$. \\

On a separable metric space, $\Gamma$-convergence is metrizable (\cite{DalMaso93}, chapter 10) for a certain class of functionals.   
\begin{lemma}\label{lem: Gianni Dal Maso}(see \cite{DalMaso93}) Let us fix a dense subset $\left\{x_i: i\in\N\right\}$ of $\SS$, a sequence $(\kappa_j)_{j\in\N}$ of positive real numbers converging to $0$, and an increasing homeomorphism $\Phi$ between $[0, +\infty]$ and $[0, 1]$. 

Then, for a sequence of functionals $f_n : \SS \rightarrow [0, +\infty]$ $(n\in\N)$ satisfying 
\begin{equation}\label{eq: equi-coercive}
(u_n)_{n\in\N}\subset\SS, \ \sup_{n\in\N} f_n(u_n) < +\infty \ \Rightarrow \ \exists u\in\SS, n_k\uparrow+\infty: \ u_{n_k}\stackrel{d}{\to} u, 
\end{equation} 
it holds that $(f_n)_{n\in\N} \ \Gamma$-converges to a functional $f:\SS \rightarrow [0, +\infty]$ $(f_n \stackrel{\Gamma}{\to} f)$ if and only if we have 
\begin{equation}
\delta(f_n, f) \to 0 \ (n\to +\infty),
\end{equation}
with
\begin{equation}\label{eq: definition of metric}
\delta(f_n, f) \ := \ \sum_{i,j = 1}^{\infty}{2^{-i-j}|\Phi(\cY_{\kappa_j} f_n(x_i)) - \Phi(\cY_{\kappa_j}f(x_i))|}. 
\end{equation}
\end{lemma}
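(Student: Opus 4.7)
My plan is to reduce the stated equivalence to the well-known characterization of $\Gamma$-convergence through pointwise convergence of Moreau–Yosida approximations on a dense set, and then appeal to the compactness of $\Gamma$-convergence on separable metric spaces to identify limits. As a preliminary step I would unfold $\delta$: since $\Phi:[0,+\infty]\to[0,1]$ is a homeomorphism and every term of the series defining $\delta(f_n,f)$ is bounded by $2^{-i-j}$, the dominated convergence theorem on $\N\times\N$ shows that $\delta(f_n,f)\to 0$ is equivalent to the termwise convergence
\[
\cY_{\kappa_j}f_n(x_i)\to\cY_{\kappa_j}f(x_i)\qquad\text{for every } i,j\in\N,
\]
so the lemma reduces to the claim that $\Gamma$-convergence of $(f_n)$ is characterized by this countable family of numerical convergences.

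For the forward direction $f_n\stackrel{\Gamma}{\to}f \Rightarrow \delta(f_n,f)\to 0$ I would invoke stability of $\Gamma$-convergence under nonnegative continuous perturbations: for fixed $i,j$ the map $v\mapsto\frac{1}{2\kappa_j}d^2(v,x_i)$ is $d$-continuous, hence
\[
f_n(\cdot)+\tfrac{1}{2\kappa_j}d^2(\cdot,x_i)\;\stackrel{\Gamma}{\to}\;f(\cdot)+\tfrac{1}{2\kappa_j}d^2(\cdot,x_i).
\]
The equi-coercivity (\ref{eq: equi-coercive}) combined with the nonnegativity of the added term preserves equi-coercivity of the perturbed sequence, so the Fundamental Theorem of $\Gamma$-convergence yields convergence of the infima, $\cY_{\kappa_j}f_n(x_i)\to\cY_{\kappa_j}f(x_i)$, as required.

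For the backward direction $\delta(f_n,f)\to 0\Rightarrow f_n\stackrel{\Gamma}{\to} f$ I would argue by compactness and identification. Since $\SS$ is separable, the class of equi-coercive nonnegative functionals is sequentially $\Gamma$-compact, so along any subsequence $(n_k)$ a further subsequence exists with $f_{n_{k_l}}\stackrel{\Gamma}{\to} h$ for some lower semicontinuous $h:\SS\to[0,+\infty]$. Applying the already-proved forward implication to $(f_{n_{k_l}})$ and $h$ gives $\cY_{\kappa_j}f_{n_{k_l}}(x_i)\to \cY_{\kappa_j}h(x_i)$ for every $i,j$, while the hypothesis forces $\cY_{\kappa_j}f_{n_{k_l}}(x_i)\to\cY_{\kappa_j}f(x_i)$; hence $\cY_{\kappa_j}h(x_i)=\cY_{\kappa_j}f(x_i)$ for all $i,j$. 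Using $d$-continuity of $x\mapsto\cY_{\kappa_j}g(x)$ (a consequence of the inf-convolution structure, with equi-coercivity bounding approximate minimizers to a fixed sublevel set) together with density of $\{x_i\}$, this upgrades to $\cY_{\kappa_j}h\equiv\cY_{\kappa_j}f$ on all of $\SS$ for every $j$. Since $\cY_{\kappa_j}g\uparrow g$ as $\kappa_j\downarrow 0$ for any lower semicontinuous $g$, taking $j\to\infty$ forces $h\equiv f$. A standard Urysohn subsequence argument then upgrades subsequential $\Gamma$-convergence to $\Gamma$-convergence of the whole sequence $(f_n)$ to $f$.

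The main obstacle is the identification step in the backward direction: without $\Gamma$-convergence in hand a priori, one cannot produce recovery sequences directly and must instead pin down the common limit of every $\Gamma$-convergent subsequence through the Moreau–Yosida values. The crucial fact is that Moreau–Yosida approximations at a countable family of parameters $\kappa_j\downarrow 0$, evaluated on a dense set $\{x_i\}$, uniquely determine a lower semicontinuous functional under equi-coercivity, and it is precisely here that both separability of $(\SS,d)$ and the coercivity assumption (\ref{eq: equi-coercive}) are indispensable.
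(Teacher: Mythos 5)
The paper does not prove this lemma: it is quoted directly from Dal Maso's book (\cite{DalMaso93}, chapter 10), so there is no internal proof to compare your attempt against. Your reconstruction is correct and follows the standard route: reduce $\delta(f_n,f)\to0$ to termwise convergence $\cY_{\kappa_j}f_n(x_i)\to\cY_{\kappa_j}f(x_i)$ by dominated convergence and the homeomorphism property of $\Phi$; obtain the forward implication from stability of $\Gamma$-convergence under the continuous perturbation $\tfrac{1}{2\kappa_j}d^2(\cdot,x_i)$ together with the Fundamental Theorem, where the equi-coercivity (\ref{eq: equi-coercive}) is exactly what makes the infima converge; and obtain the converse from $\Gamma$-compactness on the separable space, identification of every subsequential $\Gamma$-limit through its Yosida transforms on the dense set, and the Urysohn property. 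Two small points deserve attention. First, the identification step $h\equiv f$ uses $\cY_{\kappa_j}f\uparrow f$ as $\kappa_j\downarrow0$, which requires $f$ to be $d$-lower semicontinuous; since $\cY_{\kappa_j}f$ coincides with $\cY_{\kappa_j}$ of the lower semicontinuous envelope of $f$, the metric $\delta$ separates functionals only up to relaxation, so the ``if'' direction is literally true only for l.s.c.\ $f$ --- which is implicit in Dal Maso's formulation and harmless here, since the paper applies the lemma only to functionals that are l.s.c.\ ($\Gamma$-limits are automatically so). Second, your claim that continuity of $x\mapsto\cY_{\kappa_j}g(x)$ rests on equi-coercivity is a slight overstatement: nonnegativity of $g$ already confines approximate minimizers of the inf-convolution to a fixed ball around the base point and yields local Lipschitz continuity. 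Neither point is a genuine gap.
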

We may assume that $\Phi$ is Lipschitz continuous.\\

Now, let $(\alpha_k)_{k\in\N}$ with $\alpha_k > 0 \ (k\in\N)$ and $\alpha_k \to 0 \ (k\to +\infty)$ be given and for $\epsilon, \tau > 0, \ k\in\N$, define $F_{\epsilon, \tau, \alpha_k}, F_{\tau, \alpha_k}: \SS\rightarrow [0, +\infty]$ by
\begin{eqnarray*}
F_{\epsilon,\tau,\alpha_k} &:=& \frac{\phi_\epsilon - \cY_\tau \phi_\epsilon}{\tau} + \alpha_k (\phi_\epsilon + A +Bd^2(\cdot, u_\star)) + d(\cdot, u_\star), \\
F_{\tau, \alpha_k} &:=& \frac{\phi - \cY_\tau \phi}{\tau} + \alpha_k (\phi + A +Bd^2(\cdot, u_\star)) + d(\cdot, u_\star). 
\end{eqnarray*}
Then, for $\tau \in \left(0,\frac{1}{2B}\right)$ and $\alpha_k > 0$ fixed, it holds that
\begin{equation*}
F_{\epsilon, \tau, \alpha_k} \stackrel{\Gamma}{\to} F_{\tau, \alpha_k} \ (\epsilon\to0). 
\end{equation*}
We used the fact that $\cY_\tau \phi_\epsilon$ converges locally uniformly to $\cY_\tau \phi$ as $\epsilon \to0$, which follows from the Fundamental Theorem of $\Gamma$-convergence on the convergence of minimum problems (see \cite{Braides02}, \cite{DalMaso93}). The Fundamental Theorem is applicable due to assumption \ref{ass: co} on $(\phi_\epsilon)_{\epsilon > 0}$. 

At the end, we will be interested in the limit $\tau\to0$, so in the following, if we consider $\tau > 0$, we tacitly suppose that $\tau\in\left(0,\frac{1}{2B}\right)$.  \\

Let $(\theta_\tau)_{\tau>0}$ with $\theta_\tau > 0$ for $\tau > 0$ and $\theta_\tau\to0$ $(\tau\to0)$ be given.\\

Since $(\phi_\epsilon)_{\epsilon > 0}$ satisfies assumption \ref{ass: co}, condition (\ref{eq: equi-coercive}) holds for $(F_{\epsilon, \tau, \alpha_k})_{\epsilon > 0}$. We apply Lemma \ref{lem: Gianni Dal Maso} and deduce that 
\begin{equation*}
\delta(F_{\epsilon, \tau, \alpha_k}, F_{\tau, \alpha_k}) \to 0 \ (\epsilon\to0)
\end{equation*}
for all $\tau, \alpha_k > 0$. Thus, for all $\tau, \alpha_k > 0$ there exists $\epsilon_{\tau, \alpha_k} > 0$ such that
\begin{equation}\label{eq: existence of epsilon(tau, alpha_k)}
\delta(F_{\eta, \tau, \alpha_k}, F_{\tau, \alpha_k}) \leq \theta_\tau \text{ for all } 0 < \eta \leq \epsilon_{\tau, \alpha_k}. 
\end{equation} 

Next, we show that we can choose $\epsilon_\tau = \epsilon_{\tau, \alpha_k}$ independent of $\alpha_k \ (k\in\N)$ in (\ref{eq: existence of epsilon(tau, alpha_k)}). For fixed $\tau > 0$ and for $\epsilon_k\to0 \ (\epsilon_k >0)$ we have
\begin{eqnarray*}
F_{\epsilon_k, \tau, \alpha_k} &\stackrel{\Gamma}{\to}& \frac{\phi - \cY_\tau\phi}{\tau} + d(\cdot, u_\star) \ (k\to +\infty), \\
F_{\tau, \alpha_k} &\stackrel{\Gamma}{\to}& \frac{\phi - \cY_\tau\phi}{\tau} + d(\cdot, u_\star) \ (k\to +\infty). 
\end{eqnarray*}
This means that the two families of functionals $\left\{F_{\epsilon, \tau, \alpha_k}: \epsilon > 0, \ k\in\N \right\}$ and $\left\{F_{\tau, \alpha_k}: k\in\N\right\}$ are uniformly $\Gamma$-equivalent. The notion of $\Gamma$-equivalence was introduced by Braides and Truskinovsky \cite{braides2008asymptotic}.

Moreover, both $(F_{\epsilon_k, \tau, \alpha_k})_{k\in\N}$ and $(F_{\tau, \alpha_k})_{k\in\N}$ satisfy condition (\ref{eq: equi-coercive}). It easily follows (see \cite{braides2008asymptotic} for the corresponding statement) that
\begin{equation}\label{eq: 1}
\sup_{k\in\N} |\cY_{\kappa}F_{\epsilon, \tau, \alpha_k}(x) - \cY_{\kappa}F_{\tau, \alpha_k}(x)| \to 0 \ (\epsilon\to 0)
\end{equation}
for all $\kappa > 0, x\in\SS$.

In view of definition (\ref{eq: definition of metric}) of $\delta(\cdot, \cdot)$ with $\Phi$ Lipschitz continuous, we can conclude from (\ref{eq: 1}) with basic straightforward arguments that for all $\tau > 0$ there exists $\epsilon_\tau > 0$ such that
\begin{equation}\label{eq: existence of epsilon(tau)}
\delta(F_{\eta, \tau, \alpha_k}, F_{\tau, \alpha_k}) \leq \theta_\tau \text{ for all } 0 < \eta \leq \epsilon_\tau, \ \alpha_k > 0. 
\end{equation} 

We want to pass on to the limit $\tau\to0$. \\

Let us consider $(F_{\tau, \alpha_k})_{\tau > 0}$ for $\alpha_k > 0$ fixed.\\

On a separable metric space, $\Gamma$-convergence is compact (see \cite{Braides02}, \cite{DalMaso93}). Thus, every subsequence of $(F_{\tau, \alpha_k})_{\tau > 0}$ admits a $\Gamma$-converging (sub)subsequence. \\ 

If we have 
\begin{equation*}
F_{\tau_n, \alpha_k} \stackrel{\Gamma}{\to} \Sigma \ (n\to+\infty)
\end{equation*} 
for some functional $\Sigma: \SS \rightarrow [0, +\infty]$ and a subsequence $\tau_n\to0$, then the following facts hold.

As $(F_{\tau, \alpha_k})_{\tau > 0}$ satisfies condition (\ref{eq: equi-coercive}), we can apply Lemma \ref{lem: Gianni Dal Maso} to deduce that
\begin{equation}\label{eq: convergence in metric to Sigma}
\delta(F_{\tau_n, \alpha_k}, \Sigma) \to 0 \ (n\to+\infty). 
\end{equation}    

It can be proved \cite{AmbrosioGigliSavare05} that if a functional satisfies assumption \ref{ass: AGS1}, then it also satisfies (\ref{eq: existence of minimizers}). Hence, Proposition \ref{prop: important proposition} is applicable to $\phi$. By the definition of $\Gamma$-convergence it holds that 
\begin{equation*}
\Sigma(u) = \inf \left\{\mathop{\liminf}_{n\to+\infty} F_{\tau_n, \alpha_k}(u_{\tau_n}): \ u_{\tau_n}\stackrel{d}{\to}u\right\}.  
\end{equation*} 
We can conclude that 
\begin{equation}\label{eq: Sigma}
\Sigma(u) \geq \frac{1}{2} |\partial^- \phi|^2(u) + \alpha_k(\phi(u) + A + Bd^2(u, u_\star)) + d(u, u_\star)
\end{equation}
for $u\in\SS$. \\

Now, we can put together all the building blocks of our proof.\\

Let $(\epsilon(\tau))_{\tau > 0}$ with $\epsilon(\tau) \leq \epsilon_\tau$ and $\epsilon(\tau)\to 0$ $(\tau\to0)$ be given with $(\epsilon_\tau)_{\tau > 0}$ as in (\ref{eq: existence of epsilon(tau)}). \\

We consider $(F_{\epsilon(\tau), \tau, \alpha_k})_{\tau > 0}$ for an arbitrary but fixed $\alpha_k > 0$. \\

Let a subsequence $\tau_n \to0$ and a functional $\Sigma: \SS\rightarrow [0,+\infty]$ be given with 
\begin{equation*}
F_{\tau_n, \alpha_k} \stackrel{\Gamma}{\to} \Sigma \ (n\to+\infty), 
\end{equation*}
i.e. as in the preceding considerations on $(F_{\tau, \alpha_k})_{\tau > 0}$. \\

In this case we also have
\begin{equation*}
F_{\epsilon(\tau_n), \tau_n, \alpha_k} \stackrel{\Gamma}{\to} \Sigma \ (n\to+\infty).
\end{equation*}
In fact, this follows from (\ref{eq: existence of epsilon(tau)}), (\ref{eq: convergence in metric to Sigma}), the triangle inequality
\begin{equation*}
\delta(F_{\epsilon(\tau_n), \tau_n, \alpha_k}, \Sigma) \leq \delta(F_{\epsilon(\tau_n), \tau_n, \alpha_k}, F_{\tau_n, \alpha_k}) + \delta(F_{\tau_n, \alpha_k}, \Sigma).
\end{equation*}
and Lemma \ref{lem: Gianni Dal Maso} (note that condition (\ref{eq: equi-coercive}) is satisfied for $(F_{\epsilon(\tau_n), \tau_n, \alpha_k})_{n\in\N}$).\\ 

All in all, remembering the compactness of $\Gamma$-convergence on a separable metric space, we obtain
\begin{equation*}
\mathop{\liminf}_{\tau\to0} F_{\epsilon(\tau), \tau, \alpha_k}(u_\tau) \geq \frac{1}{2} |\partial^- \phi|^2(u) + \alpha_k(\phi(u) + A + Bd^2(u, u_\star)) + d(u, u_\star)
\end{equation*} 
for all $u, u_\tau\in\SS \ (\tau > 0)$ with $u_\tau \stackrel{d}{\to}u$.\\ 

Now, we can directly deduce that assumption \ref{ass: crucial assumption} holds.\\

Let $u, u_\tau \in \SS \ (\tau > 0)$ be given with 
\begin{equation*}
u_\tau \stackrel{d}{\to}u, \ \sup_{\tau} \phi_{\epsilon(\tau)}(u_\tau) < +\infty,  
\end{equation*}
and set $C:= \sup_{\tau} \left(\phi_{\epsilon(\tau)}(u_\tau) + A + Bd^2(u_\tau, u_\star)\right) < +\infty$. 
It follows from the preceding steps that 
\begin{equation}
\frac{1}{2}|\partial^- \phi|^2(u) \ \leq \ \mathop{\liminf}_{\tau\to0} \frac{\phi_{\epsilon(\tau)}(u_\tau) - \cY_\tau \phi_{\epsilon(\tau)}(u_\tau)}{\tau} + \alpha_k C
\end{equation}
for all $\alpha_k > 0$. \\

The proof of assumption \ref{ass: crucial assumption}, and thus, of Theorem \ref{thm: existence of epsilon(tau)} is complete.  
\end{proof}
\newpage
\section{Two finite dimensional examples}\label{sec: 1D examples}

Let a $C^1$-function $\phi: \R \rightarrow \R$ be given, with
\begin{equation*}
\phi(\cdot)  \geq -A -Bd^2(\cdot, u_\star)
\end{equation*} 
for some $A, B > 0, \ u_\star\in\R$. (We write $d(x,y) := |x-y|$.) 

Then assumption \ref{ass: AGS1} is satisfied and the relaxed slope $|\partial^- \phi| = |\phi'|$ is a strong upper gradient for $\phi$. It can be proved \cite{AmbrosioGigliSavare05} that if a functional satisfies assumption \ref{ass: AGS1}, then it also satisfies (\ref{eq: existence of minimizers}). We consider two examples of perturbations $\phi_\epsilon: \R \rightarrow \R$ of $\phi$ with $\phi_\epsilon \stackrel{\Gamma}{\to} \phi$.

\begin{example}\label{ex: oscillation example}
Let an arbitrary function $a: \R \rightarrow [0, +\infty)$ be given and define
\begin{equation*}
\phi_\epsilon (x) = \phi(x) + a(x) \cos^2\left(\frac{x}{\epsilon}\right). 
\end{equation*}
If we choose $\epsilon = \epsilon(\tau)$ with $\frac{\epsilon(\tau)}{\tau} \to 0 \ (\tau\to 0)$ in our scheme (\ref{eq: recovery sequence}) - (\ref{eq: relaxed form of minimi}), then we can apply Theorem \ref{thm: main theorem} and any curve $u$ constructed in accordance with (\ref{eq: convergence}) solves
\begin{eqnarray*}
u'(t) = -\phi'(u(t)) \text{ for all } t\in(0, +\infty), \ 
u(0) = u^0. 
\end{eqnarray*}
\begin{proof}
We only have to check assumption \ref{ass: crucial assumption}. Let $u, u_\tau\in \R \ (\tau > 0)$ be given such that 
\begin{equation*}
u_\tau \to u, \ \sup_\tau \phi_{\epsilon(\tau)}(u_\tau) < +\infty.  
\end{equation*}  
By proposition \ref{prop: important proposition}, it is sufficient to show 
\begin{equation}\label{eq: sufficient condition}
\mathop{\liminf}_{\tau\to0} \frac{\phi_{\epsilon(\tau)}(u_\tau) - \cY_\tau \phi_{\epsilon(\tau)}(u_\tau)}{\tau} \ \geq \ \mathop{\liminf}_{\tau\to 0} \frac{\phi(u_\tau) - \cY_\tau\phi(u_\tau)}{\tau}. 
\end{equation}
We set $v_\tau \in J_\tau\phi[u_\tau]$ and $\tilde{v}_\tau\in \R$ such that
\begin{equation*}
\cos \left(\frac{\tilde{v}_\tau}{\epsilon(\tau)}\right) = 0 \text{ and } d(\tilde{v}_\tau, v_\tau) \leq \pi\epsilon(\tau). 
\end{equation*} 
As in the proof of proposition \ref{prop: important proposition}, we have $v_\tau \to u$ $(\tau\to0)$.

The $C^1$-function $\phi$ is locally Lipschitz continuous and a Lipschitz constant for a neighbourhood of $u$ is denoted by $C(u) > 0$. We use the fact that
\begin{eqnarray*}
\frac{1}{2\tau}d^2(u_\tau, v_\tau)\ \leq \ \phi(u_\tau) - \phi(v_\tau) \ \leq \ C(u)d(u_\tau, v_\tau).  
\end{eqnarray*}

All in all, we obtain

\begin{eqnarray*}
&& \frac{\phi_{\epsilon(\tau)}(u_\tau) - \cY_\tau \phi_{\epsilon(\tau)}(u_\tau)}{\tau} \ + \ \frac{\cY_\tau \phi(u_\tau) - \phi(u_\tau)}{\tau} \ \geq \ \frac{\cY_\tau\phi(u_\tau) - \cY_\tau \phi_{\epsilon(\tau)}(u_\tau)}{\tau} \\
&\geq& \frac{\phi(v_\tau) - \phi(\tilde{v}_\tau)}{\tau} + \frac{1}{2\tau^2}\left[d^2(u_\tau, v_\tau) - d^2(u_\tau, \tilde{v}_\tau)\right]\\
&\geq& -C(u) \frac{d(v_\tau, \tilde{v}_\tau)}{\tau} + \frac{1}{2\tau^2}\left[-d^2(v_\tau, \tilde{v}_\tau) - 2d(u_\tau, v_\tau)d(v_\tau, \tilde{v}_\tau)\right] \\
&\geq& -C(u) \pi \frac{\epsilon(\tau)}{\tau} - \frac{(\pi \epsilon(\tau))^2}{2\tau^2} - 2C(u)\frac{\pi\epsilon(\tau)}{\tau}.  
\end{eqnarray*}
The proof is finished.  
\end{proof}
\end{example} 

\begin{example}\label{ex: finite dimensional perturbation}
Let functions $\zeta_\epsilon: \R \rightarrow \R$ be given with 
\begin{equation*}
\zeta_\epsilon(\cdot) \geq -\tilde{A} - \tilde{B}d^2(\cdot, \tilde{u}_\star)
\end{equation*}
for some $\tilde{A}, \tilde{B} > 0, \ \tilde{u}_\star\in \R$ and such that $(\zeta_\epsilon)_{\epsilon > 0}$ converges locally uniformly to a continuous function $\zeta: \R \rightarrow \R$. We define
\begin{equation*}
\phi_\epsilon(x) = \phi(x) + \epsilon \zeta_\epsilon(x). 
\end{equation*}
If we choose $\epsilon = \epsilon(\tau)$ with $\frac{\epsilon(\tau)}{\tau} \leq C$ for some $C > 0$ in (\ref{eq: recovery sequence}) - (\ref{eq: relaxed form of minimi}), then we can apply Theorem \ref{thm: main theorem} and any curve $u$ constructed in accordance with (\ref{eq: convergence}) solves
\begin{eqnarray*}
u'(t) = -\phi'(u(t)) \text{ for all } t\in(0, +\infty), \ 
u(0) = u^0. 
\end{eqnarray*}
\begin{proof}
We only have to check assumption \ref{ass: crucial assumption}. Let $u, u_\tau\in \R \ (\tau > 0)$ be given such that 
\begin{equation*}
u_\tau \to u, \ \sup_\tau \phi_{\epsilon(\tau)}(u_\tau) < +\infty.  
\end{equation*}
Again, it suffices to show (\ref{eq: sufficient condition}). We set $v_\tau\in J_\tau \phi[u_\tau]$ and as in example \ref{ex: oscillation example} it holds that $v_\tau \to u \ (\tau\to 0)$. We have
\begin{eqnarray*}
&& \frac{\phi_{\epsilon(\tau)}(u_\tau) - \cY_\tau\phi_{\epsilon(\tau)}(u_\tau)}{\tau} \ + \ \frac{\cY_\tau\phi(u_\tau) - \phi(u_\tau)}{\tau} \\
&\geq& \frac{\phi_{\epsilon(\tau)}(u_\tau) - \phi_{\epsilon(\tau)}(v_\tau)}{\tau} + \frac{\phi(v_\tau) - \phi(u_\tau)}{\tau} \ = \ \frac{\epsilon(\tau)}{\tau} \left(\zeta_{\epsilon(\tau)}(u_\tau) - \zeta_{\epsilon(\tau)}(v_\tau)\right), 
\end{eqnarray*} 
and we can conclude. 
\end{proof}
\end{example}

\section{Perturbations, time-space discretizations}\label{sec: perturbations and co}

We return to the general case of a complete metric space $(\SS, d)$ with a (compatible) topology $\sigma$ on it, in accordance with the topological assumptions of section \ref{subsec:top ass}, and study some stimulating aspects. 

\subsection{Perturbations}\label{subsec: perturbations} 

Let a functional $\phi: \SS \rightarrow (-\infty, +\infty]$ satisfying assumption \ref{ass: AGS1} and functionals $\cP_\epsilon: \SS \rightarrow (-\infty, +\infty]$ $(\epsilon > 0)$ be given with  
\begin{equation}
\cP_\epsilon (\cdot) \geq -\tilde{A}-\tilde{B}d^2(\cdot, \tilde{u}_\star) 
\end{equation}
for some $\tilde{A}, \tilde{B} > 0, \ \tilde{u}_\star\in\SS$. We suppose that for $\epsilon_n \to 0 \ (\epsilon_n > 0)$ it holds that
\begin{equation}\label{eq: locally bounded}
\sup_{n,m} d(u_n, u_m) < +\infty, \ u_n\stackrel{\sigma}{\weakto}u \ \Rightarrow \ \sup_{n\in\N} |\cP_{\epsilon_n}(u_n)| < +\infty. 
\end{equation} 

We define 
\begin{equation}
\phi_\epsilon(u) = \phi(u) + \epsilon \cP_\epsilon(u). 
\end{equation}
The functionals $\phi_\epsilon: \SS\rightarrow (-\infty, +\infty]$ $(\epsilon > 0)$ satisfy assumption \ref{ass: co}, and moreover, assumption \ref{ass: liminf} holds. 
In order to apply Theorem \ref{thm: main theorem} it remains to check assumption \ref{ass: crucial assumption}: 

\begin{proposition}\label{prop: perturbations}
Let $\phi$ and $\phi_\epsilon = \phi + \epsilon \cP_\epsilon $ be given as above. If we choose $\epsilon = \epsilon(\tau)$ with $\frac{\epsilon(\tau)}{\tau} \to 0$ $(\tau\to0)$, then assumption \ref{ass: crucial assumption} is satisfied with this choice $\epsilon = \epsilon(\tau) \ (\tau > 0)$. In particular, all the results of Theorem \ref{thm: main theorem} hold.  
\end{proposition}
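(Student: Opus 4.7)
The plan is to reduce Proposition \ref{prop: perturbations} to Proposition \ref{prop: important proposition} applied to the unperturbed functional $\phi$, by showing that the difference between the Moreau–Yosida remainder quotients of $\phi_{\epsilon(\tau)}$ and $\phi$ is a vanishing error. To this end, fix $u, u_\tau \in \SS$ with $u_\tau \stackrel{\sigma}{\weakto} u$ and $\sup_\tau\{\phi_{\epsilon(\tau)}(u_\tau), d(u_\tau, u)\} < +\infty$. By the local boundedness assumption (\ref{eq: locally bounded}) applied to $(u_\tau)$, we have $\sup_\tau |\cP_{\epsilon(\tau)}(u_\tau)| < +\infty$, which combined with $\epsilon(\tau) \to 0$ gives $\sup_\tau \phi(u_\tau) < +\infty$, so $(u_\tau)$ is also admissible in the sense required by Proposition \ref{prop: important proposition} for $\phi$.

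Since $\phi$ satisfies assumption \ref{ass: AGS1}, Proposition \ref{prop: important proposition} is applicable to $\phi$; in particular $J_\tau \phi[u_\tau] \neq \emptyset$ for small $\tau$. Pick any $v_\tau \in J_\tau \phi[u_\tau]$. As in the proof of Proposition \ref{prop: important proposition}, the lower bound on $\phi$ together with the minimization inequality yields $d(v_\tau, u_\tau) \to 0$. By the second topological compatibility axiom this forces $v_\tau \stackrel{\sigma}{\weakto} u$ with $\sup_\tau d(v_\tau, u) < +\infty$, so a second application of (\ref{eq: locally bounded}) gives $\sup_\tau |\cP_{\epsilon(\tau)}(v_\tau)| < +\infty$.

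Next, comparing Moreau–Yosida approximations by using $v_\tau$ as a competitor for $\cY_\tau \phi_{\epsilon(\tau)}(u_\tau)$,
\begin{equation*}
\cY_\tau \phi_{\epsilon(\tau)}(u_\tau) \ \leq \ \phi_{\epsilon(\tau)}(v_\tau) + \frac{1}{2\tau} d^2(v_\tau, u_\tau) \ = \ \cY_\tau \phi(u_\tau) + \epsilon(\tau)\, \cP_{\epsilon(\tau)}(v_\tau),
\end{equation*}
and expanding $\phi_{\epsilon(\tau)}(u_\tau) = \phi(u_\tau) + \epsilon(\tau)\, \cP_{\epsilon(\tau)}(u_\tau)$, one obtains the key identity
\begin{equation*}
\frac{\phi_{\epsilon(\tau)}(u_\tau) - \cY_\tau \phi_{\epsilon(\tau)}(u_\tau)}{\tau} \ \geq \ \frac{\phi(u_\tau) - \cY_\tau \phi(u_\tau)}{\tau} + \frac{\epsilon(\tau)}{\tau}\bigl(\cP_{\epsilon(\tau)}(u_\tau) - \cP_{\epsilon(\tau)}(v_\tau)\bigr).
\end{equation*}

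Because $\epsilon(\tau)/\tau \to 0$ and the bracket is uniformly bounded by the two applications of (\ref{eq: locally bounded}), the last term vanishes as $\tau \to 0$. Passing to the $\liminf$ and invoking Proposition \ref{prop: important proposition} for $\phi$ yields
\begin{equation*}
\liminf_{\tau \to 0} \frac{\phi_{\epsilon(\tau)}(u_\tau) - \cY_\tau \phi_{\epsilon(\tau)}(u_\tau)}{\tau} \ \geq \ \liminf_{\tau \to 0} \frac{\phi(u_\tau) - \cY_\tau \phi(u_\tau)}{\tau} \ \geq \ \frac{1}{2} |\partial^- \phi|^2(u),
\end{equation*}
which is precisely assumption \ref{ass: crucial assumption}. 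The main subtlety is the step establishing $v_\tau \stackrel{\sigma}{\weakto} u$ with bounded distance, since this is what turns the pointwise local boundedness (\ref{eq: locally bounded}) into a \emph{uniform} bound on $\cP_{\epsilon(\tau)}(v_\tau)$ along the minimizers; everything else is a direct comparison argument and an invocation of Proposition \ref{prop: important proposition}.
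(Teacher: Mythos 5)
Your proposal is correct and follows essentially the same route as the paper: reduce to Proposition \ref{prop: important proposition} for $\phi$, take $v_\tau\in J_\tau\phi[u_\tau]$ as a competitor in $\cY_\tau\phi_{\epsilon(\tau)}(u_\tau)$, and observe that the discrepancy equals $\frac{\epsilon(\tau)}{\tau}\bigl(\cP_{\epsilon(\tau)}(u_\tau)-\cP_{\epsilon(\tau)}(v_\tau)\bigr)$, which vanishes by (\ref{eq: locally bounded}) since $d(u_\tau,v_\tau)\to 0$ and the compatibility of $\sigma$ with $d$ give $v_\tau\stackrel{\sigma}{\weakto}u$. Your spelled-out verification that $(u_\tau)$ and $(v_\tau)$ are admissible for (\ref{eq: locally bounded}) and for Proposition \ref{prop: important proposition} merely makes explicit what the paper leaves implicit.
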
 
  
\begin{proof}
We prove assumption \ref{ass: crucial assumption} with similar arguments as in example \ref{ex: finite dimensional perturbation}. Let $u, u_\tau\in \SS \ (\tau > 0)$ be given with 
\begin{equation*}
u_\tau \stackrel{\sigma}{\weakto} u, \ \sup_\tau \left\{\phi_{\epsilon(\tau)}(u_\tau), d(u_\tau, u)\right\} < +\infty. 
\end{equation*} 
We have already remarked several times that if a functional satisfies assumption \ref{ass: AGS1}, Proposition \ref{prop: important proposition} is applicable to it. Hence, in order to prove assumption \ref{ass: crucial assumption}, it suffices to show 
\begin{equation*}
\mathop{\liminf}_{\tau\to0} \frac{\phi_{\epsilon(\tau)}(u_\tau) - \cY_\tau \phi_{\epsilon(\tau)}(u_\tau)}{\tau} \ \geq \ \mathop{\liminf}_{\tau\to 0} \frac{\phi(u_\tau) - \cY_\tau\phi(u_\tau)}{\tau}. 
\end{equation*} 
We set $v_\tau\in J_\tau \phi[u_\tau]$. The sequence $\left(\frac{d(u_\tau, v_\tau)}{\sqrt{\tau}}\right)_{\tau > 0}$ is bounded (see the beginning of the proof of Proposition \ref{prop: important proposition}). We have   
\begin{eqnarray*}
&& \frac{\phi_{\epsilon(\tau)}(u_\tau) - \cY_\tau\phi_{\epsilon(\tau)}(u_\tau)}{\tau} \ + \ \frac{\cY_\tau\phi(u_\tau) - \phi(u_\tau)}{\tau} \\
&\geq& \frac{\phi_{\epsilon(\tau)}(u_\tau) - \phi_{\epsilon(\tau)}(v_\tau)}{\tau} + \frac{\phi(v_\tau) - \phi(u_\tau)}{\tau} \ = \ \frac{\epsilon(\tau)}{\tau} \left(\cP_{\epsilon(\tau)}(u_\tau) - \cP_{\epsilon(\tau)}(v_\tau)\right).  
\end{eqnarray*}
Applying (\ref{eq: locally bounded}) we conclude. 
\end{proof}

In particular, if $|\partial^- \phi|$ is a strong upper gradient for $\phi$, the relaxed minimizing movement scheme (\ref{eq: recovery sequence}) - (\ref{eq: relaxed form of minimi}) along $(\phi + \epsilon\cP_\epsilon)_{\epsilon > 0}$ leads to curves of maximal slope for $\phi$ with respect to $|\partial^- \phi|$. 

\paragraph{Remark on the choice $\epsilon = \epsilon(\tau)$} Note that other choices $\epsilon = \epsilon(\tau)$ (e.g. of order $\cO(\tau)$) might be possible besides, depending on the special features of the functionals $\cP_\epsilon$. Indeed, in order to prove assumption \ref{ass: crucial assumption}, the following condition is sufficient:

Whenever $(u_\tau)_{\tau > 0}, (v_\tau)_{\tau > 0} \subset \SS$ satisfy
\begin{eqnarray*}
u_\tau, v_\tau \stackrel{\sigma}{\weakto} u, \ \sup_\tau \left\{\frac{d(u_\tau, v_\tau)}{\sqrt{\tau}}, d(u_\tau, u)\right\} < +\infty,  
\end{eqnarray*}
then it holds that
\begin{equation*}
\mathop{\lim}_{\tau\to0} \frac{\epsilon(\tau)}{\tau} \left(\cP_{\epsilon(\tau)}(u_\tau) - \cP_{\epsilon(\tau)}(v_\tau)\right) = 0. 
\end{equation*}

\paragraph{Remark on condition (\ref{eq: locally bounded})} If the $\sigma$-topology coincides with the topology induced by the distance $d$, an equivalent formulation of (\ref{eq: locally bounded}) is given by
\begin{equation}
\mathop{\limsup}_{n\to+\infty} \sup_{v: d(v,u)\leq\rho_n} |\cP_{\epsilon_n}(v)| \ < \ +\infty
\end{equation} 
for every $u\in\SS$, $\rho_n\to0 \ (\rho_n > 0)$.\\

\subsection{Restriction to bounded subsets}\label{subsec: restriction to bounded subsets} 

Step by step, we have gained the impression that the deciding factor in our theory is the \textbf{local} behaviour of the sequence of functionals under consideration, whereas its global behaviour appears to be irrelevant. 

Indeed, let functionals $\phi, \phi_\epsilon: \SS \rightarrow (-\infty, +\infty]$ $(\epsilon > 0)$ be given satisfying the assumptions \ref{ass: co}, \ref{ass: liminf} and assumption \ref{ass: crucial assumption} with the choice $\epsilon = \epsilon(\tau)$, and let $(r(\epsilon))_{\epsilon > 0}$ be an arbitrary sequence  of positive real numbers $r(\epsilon)$ with $r(\epsilon)\uparrow +\infty$ $(\epsilon \to0)$. For $u_{\star\star}\in\SS$ arbitrary but fixed, we define 
\begin{equation}
\tilde{\phi}_\epsilon = \phi_\epsilon + \I_{\left\{d(u_{\star\star}, \cdot) \leq r(\epsilon)\right\}},
\end{equation}
in which we set $\I_{\VV_\epsilon} \equiv 0$ on $\VV_\epsilon$, $\I_{\VV_\epsilon} \equiv +\infty$ on $\SS \setminus \VV_\epsilon$, for $\VV_\epsilon = \left\{d(u_{\star\star}, \cdot) \leq r(\epsilon)\right\}$.\\ 

Then the functionals $\phi, \tilde{\phi}_\epsilon: \SS\rightarrow (-\infty, +\infty]$ $(\epsilon > 0)$ satisfy the assumptions \ref{ass: co}, \ref{ass: liminf} (this is obviously true), and moreover, we can show that assumption \ref{ass: crucial assumption} holds for $\phi, (\tilde{\phi}_\epsilon)_{\epsilon > 0}$ as well, with the same choice $\epsilon = \epsilon(\tau)$. 
\begin{proof}
Let $u, u_\tau\in \SS \ (\tau > 0)$ be given with 
\begin{equation*}
u_\tau \stackrel{\sigma}{\weakto} u, \ \sup_\tau \left\{\tilde{\phi}_{\epsilon(\tau)}(u_\tau), d(u_\tau, u)\right\} < +\infty. 
\end{equation*} 
In order to prove assumption \ref{ass: crucial assumption}, it suffices to show 
\begin{equation*}
\mathop{\liminf}_{\tau\to0} \frac{\tilde{\phi}_{\epsilon(\tau)}(u_\tau) - \cY_\tau \tilde{\phi}_{\epsilon(\tau)}(u_\tau)}{\tau} \ \geq \ \mathop{\liminf}_{\tau\to 0} \frac{\phi_{\epsilon(\tau)}(u_\tau) - \cY_\tau\phi_{\epsilon(\tau)}(u_\tau)}{\tau}. 
\end{equation*} 
We set $v_\tau\in \SS$ with
\begin{equation*}
\phi_{\epsilon(\tau)}(v_\tau) + \frac{1}{2\tau} d^2(v_\tau, u_\tau) \leq \cY_\tau \phi_{\epsilon(\tau)}(u_\tau) + \tau^2.
\end{equation*} 
It holds that $d(u_\tau, v_\tau)\to 0$ $(\tau\to0)$. This can be proved with similar arguments as at the beginning of the proof of Proposition \ref{prop: important proposition}. Thus, there exists $\eta > 0$ such that
\begin{equation*}
v_\tau\in \left\{d(u_{\star\star}, \cdot) \leq r(\epsilon(\tau))\right\}
\end{equation*}
for all $0 < \tau < \eta$. So we have
\begin{eqnarray*}
&& \frac{\tilde{\phi}_{\epsilon(\tau)}(u_\tau) - \cY_\tau \tilde{\phi}_{\epsilon(\tau)}(u_\tau)}{\tau} \ + \ \frac{\cY_\tau\phi_{\epsilon(\tau)}(u_\tau) - \phi_{\epsilon(\tau)}(u_\tau)}{\tau} \\
&=& \frac{\cY_\tau\phi_{\epsilon(\tau)}(u_\tau) - \cY_\tau \tilde{\phi}_{\epsilon(\tau)}(u_\tau)}{\tau} 
\ \geq \ \frac{\phi_{\epsilon(\tau)}(v_\tau) - \tilde{\phi}_{\epsilon(\tau)}(v_\tau)}{\tau}  -  \tau \ = \ -\tau
\end{eqnarray*}
for all $\tau\in(0, \eta)$. The proof is finished. 
\end{proof}

The preceding proof also reveals that whenever the convergence condition (\ref{eq: convergence condition}) is satisfied for $\phi, (\phi_\epsilon)_{\epsilon > 0}$, then it is satisfied for $\phi, (\tilde{\phi}_\epsilon)_{\epsilon > 0}$ too. \\

All in all, we obtain that we can always replace (\ref{eq: relaxed form of minimi}) by 
\begin{equation}
(\phi_{\epsilon(\tau)} + \I_{\VV_{\epsilon(\tau)}}) (u_\tau^n) + \frac{1}{2\tau} d^2(u_\tau^n, u_\tau^{n-1}) \ \leq \ \cY_\tau (\phi_{\epsilon(\tau)} + \I_{\VV_{\epsilon(\tau)}}) (u_\tau^{n-1}) + \gamma_\tau \tau
\end{equation} 
with $\VV_{\epsilon(\tau)} = \left\{d(u_{\star\star}, \cdot) \leq r(\epsilon(\tau))\right\}$, and still, all the results of section \ref{sec: main theorem} hold.

\subsection{Time-space discretizations along a single functional}\label{subsec: time space discretizations}  

Let a functional $\phi: \SS\rightarrow (-\infty, +\infty]$ satisfying assumption \ref{ass: AGS1} and subsets $\WW_\epsilon\subset\SS$ $(\epsilon > 0)$ be given. We define
\begin{equation}
\phi_\epsilon = \phi + \I_{\WW_\epsilon},
\end{equation}
in which we set $\I_{\WW_\epsilon} \equiv 0$ on $\WW_\epsilon$, $\I_{\WW_\epsilon} \equiv +\infty$ on $\SS \setminus \WW_\epsilon$.\\

The assumptions \ref{ass: co}, \ref{ass: liminf} are clearly satisfied in this case. We want to derive conditions on $(\WW_\epsilon)_{\epsilon > 0}$ and $(\epsilon(\tau))_{\tau > 0}$ such that assumption \ref{ass: crucial assumption} holds:\\ 

Let $u, u_\tau\in \SS \ (\tau > 0)$ be given with 
\begin{equation*}
u_\tau \stackrel{\sigma}{\weakto} u, \ \sup_\tau \left\{\phi_{\epsilon(\tau)}(u_\tau), d(u_\tau, u)\right\} < +\infty. 
\end{equation*} 
In view of Proposition \ref{prop: important proposition}, it would suffice to show  
\begin{equation*}
\mathop{\liminf}_{\tau\to0} \frac{\phi_{\epsilon(\tau)}(u_\tau) - \cY_\tau \phi_{\epsilon(\tau)}(u_\tau)}{\tau} \ \geq \ \mathop{\liminf}_{\tau\to 0} \frac{\phi(u_\tau) - \cY_\tau\phi(u_\tau)}{\tau}. 
\end{equation*} 
We set $v_\tau\in J_\tau\phi[u_\tau]$ and we have
\begin{eqnarray*}
&& \frac{\phi_{\epsilon(\tau)}(u_\tau) - \cY_\tau \phi_{\epsilon(\tau)}(u_\tau)}{\tau} \ + \ \frac{\cY_\tau \phi(u_\tau) - \phi(u_\tau)}{\tau} \ = \ \frac{\cY_\tau\phi(u_\tau) - \cY_\tau \phi_{\epsilon(\tau)}(u_\tau)}{\tau} \\
&\geq& \frac{\phi(v_\tau) - \phi(w_\tau)}{\tau} + \frac{1}{2\tau^2}\left[d^2(u_\tau, v_\tau) - d^2(u_\tau, w_\tau)\right]\\
&\geq& \frac{\phi(v_\tau) - \phi(w_\tau)}{\tau} + \frac{1}{2\tau^2}\left[-2d(u_\tau, v_\tau) d(v_\tau, w_\tau) - d^2(v_\tau, w_\tau)\right]
\end{eqnarray*} 
for all $w_\tau \in \WW_{\epsilon(\tau)}$.

If there existed $(w_\tau)_{\tau > 0} \subset \SS, \ w_\tau\in \WW_{\epsilon(\tau)} \ (\tau > 0)$, such that 
\begin{equation}\label{eq: last line}
\mathop{\liminf}_{\tau\to0} \left(\frac{\phi(v_\tau) - \phi(w_\tau)}{\tau} - \frac{d(u_\tau, v_\tau) d(v_\tau, w_\tau)}{\tau^2} - \frac{d^2(v_\tau, w_\tau)}{2\tau^2}\right) \geq 0,
\end{equation} 
assumption \ref{ass: crucial assumption} would have been proved for this choice $\epsilon = \epsilon(\tau)$.\\

The sequence $\left(\frac{d(u_\tau, v_\tau)}{\sqrt{\tau}}\right)_{\tau > 0}$ is bounded (see the beginning of the proof of Proposition \ref{prop: important proposition}). So, a sufficient condition in order to show (\ref{eq: last line}) would be
\begin{eqnarray}
\mathop{\liminf}_{\tau\to0} \frac{\phi(v_\tau) - \phi(w_\tau)}{\tau} \geq 0, \label{eqn: tsd1} \\
\frac{d(v_\tau, w_\tau)}{\sqrt{\tau^3}} \to 0 \ (\tau\to0) \label{eqn: tsd2}. 
\end{eqnarray}

\begin{example} 
We set $\SS = L^2(\Omega)$ with $\Omega\subset \RN$ and define the functional $\phi: L^2(\Omega) \rightarrow (-\infty, +\infty]$, $D(\phi) = H^1(\Omega)$, 
\begin{equation*}
\phi(u) = \frac{1}{2}\int_{\Omega}^{}{\left|\nabla u(x)\right|^2 \ dx} + \int_{\Omega}^{}{f(x, u(x)) \ dx}, \text{ if } u\in H^1(\Omega). 
\end{equation*}
Let us suppose that there exist $c_1, c_2 > 0$ such that $f: \Omega \times \R \rightarrow \R$ satisfies 
\begin{eqnarray*}
|f(x,w) - f(x,z)| &\leq& c_1 (1 + |z| + |w|)|z-w|, \\
f(x, z) &\geq& -c_2 (1 + |z|^2) 
\end{eqnarray*}
for all $x\in\Omega, w,z \in \RN$. 

Then the conditions (\ref{eqn: tsd1}), (\ref{eqn: tsd2}) hold for $w_\tau\in \WW_{\epsilon(\tau)}$ with
\begin{eqnarray}
\left\|\nabla w_\tau\right\|_{L^2(\Omega)} \leq \left\|\nabla v_\tau\right\|_{L^2(\Omega)}, \label{eqn: fem1} \\
\frac{\left\|v_\tau - w_\tau\right\|_{L^2(\Omega)}}{\sqrt{\tau^3}} \to 0 \ (\tau\to0). \label{eqn: fem2} 
\end{eqnarray}
The sequence $\left(\left\|\nabla v_\tau\right\|_{L^2(\Omega)}\right)_{\tau > 0}$ is bounded. 
Now, estimates like (\ref{eqn: fem1}), (\ref{eqn: fem2}) are well-known in the finite element theory (see e.g. \cite{Ciarlet78}). 
\end{example}

\section{Some final remarks}\label{sec: some final remarks}

We would like to mention possible generalizations of our theory which we have not considered so far for the sake of clear presentation.\\

The theory can be easily extended to $p$-curves of maximal slope (see \cite{AmbrosioGigliSavare05} for the definition of $p$-curves of maximal slope) with $p\in(1, +\infty)$, as well as to time discretizations with non-equidistant time steps. \\

Note that we do not use the special structure of the relaxed slope in the proof of Theorem \ref{thm: main theorem}. Section \ref{sec: main theorem} and the ideas of section \ref{sec: convergence of gradient flows} remain valid if we replace $|\partial^- \phi|$ by some upper gradient $g:\SS\rightarrow [0, +\infty]$.

One might consider in addition to the distance $d$ a sequence $(d_\epsilon)_{\epsilon > 0}$ of distances with 
\begin{equation}
u_n \stackrel{\sigma}{\weakto} u, \ v_n\stackrel{\sigma}{\weakto}v \ \Rightarrow \ \mathop{\liminf}_{n\to\infty} d_{\epsilon_n}(u_n, v_n) \geq d(u, v),  
\end{equation}
replace (\ref{eq: relaxed form of minimi}) by 
\begin{equation}
\phi_{\epsilon(\tau)}(u_\tau^n) + \frac{1}{2\tau} d_{\epsilon(\tau)}^2(u_\tau^n, u_\tau^{n-1}) \ \leq \ \overline{\cY}_\tau \phi_{\epsilon(\tau)}(u_\tau^{n-1}) + \gamma_\tau \tau,
\end{equation}
in which
\begin{equation*}
\overline{\cY}_\tau \phi_{\epsilon(\tau)} (u) := \inf_{v\in \SS} \left\{\phi_{\epsilon(\tau)}(v) + \frac{1}{2\tau} d_{\epsilon(\tau)}^2(v,u) \right\}, \ u\in \SS, 
\end{equation*}
and appropriately adapt the assumptions of section \ref{sec: main theorem}. Then the proof of Theorem \ref{thm: main theorem} can be adapted as well in this case, with a yet refined version of Ascoli-Arzel\`a Theorem. The ideas of section \ref{sec: convergence of gradient flows} remain valid too. 

However, we do not want to expound the possible generalizations of our theory with regard to the topology, just give an impetus in case different topological assumptions are of interest.\\

\paragraph{Acknowledgement} I would like to thank Giuseppe Savar\'e, Martin Brokate and Daniel Matthes for stimulating discussions on this topic. I gratefully acknowledge PRIN10/11 grant ``Calculus of Variations'' from MIUR and the TopMath program from Technische Universit\"at M\"unchen for supporting my visit to the University of Pavia.

\bibliographystyle{siam}
\bibliography{bibliografia2015}
\end{document}